\providecommand{\U}[1]{\protect\rule{.1in}{.1in}}
\newtheorem{theorem}{Theorem}
\theoremstyle{plain}
\newtheorem{corollary}{Corollary}
\newtheorem{lemma}{Lemma}
\newtheorem{proposition}{Proposition}
\newtheorem{remark}{Remark}
\numberwithin{equation}{section}
\numberwithin{equation}{section}
\numberwithin{theorem}{section}
\numberwithin{lemma}{section}
\numberwithin{remark}{section}
\numberwithin{example}{section}
\numberwithin{proposition}{section}
\numberwithin{definition}{section}
\numberwithin{corollary}{section}
\begin{document}
\title[Projective positivity]{Projective positivity of the function systems}
\author{Anar Dosi}
\address[Dosi=Dosiev]{ Middle East Technical University NCC, Guzelyurt, KKTC, Mersin
10, Turkey }
\email{dosiev@yahoo.com, dosiev@metu.edu.tr}
\urladdr{http://www.mathnet.ru/php/person.phtml?option\_lang=eng\&personid=23380}
\date{March 3, 2023}
\subjclass[2000]{ Primary 46L07; Secondary 46B40, 47L25}
\keywords{Function system, operator system, unital cone, state space, projective positivity}

\begin{abstract}
The present paper is devoted to the projective positivity in the category of
function systems, which plays a key role in the quantization problems of the
operator systems. The main result of the paper asserts that every unital
$\ast$-normed space can be equipped with the projective positivity. The
geometry of the related state spaces is described in the case of $L^{p}%
$-spaces, Schatten matrix spaces, and $L^{p}$-spaces of a finite von Neumann algebra.

\end{abstract}
\maketitle

\section{\textbf{Introduction\label{Sec1}}}

A concrete function system is a unital self-adjoint subspace of a unital
commutative $C^{\ast}$-algebra whereas \textit{an abstract function system} is
defined as an ordered $\ast$-vector space $\left(  V,e\right)  $ with an
Archimedian order unit $e$. A function system turns out to be a unital $\ast
$-normed space equipped with an order (semi)norm, and all order norms in $V$
are equivalent by running within min and max order norms \cite[Theorems 4.5,
4.7]{PT}. In the theory of Banach lattices, the function systems are referred
as the ordered vector spaces with their strong order units \cite[page 18]%
{PMN}. Every abstract function system can be realized up to an isometry as a
concrete one on its state space equipped with the $w^{\ast}$-topology. That is
the complex version of Kadison's representation theorem for the real function
systems \cite[Theorem 5.2]{PT}. Recall that the state space $S\left(
V_{+}\right)  $ of an ordered $\ast$-vector space $V$ with an Archimedian
order unit $e$ and a unital cone $V_{+}$ of its positive elements is defined
as the intersection $V_{he}^{\ast}\cap\operatorname{ball}V^{\ast}$ of the real
hyperplane $V_{he}^{\ast}=\left\{  \varphi\in V^{\ast}:\varphi=\varphi^{\ast
},\left\langle e,\varphi\right\rangle =1\right\}  $ in the norm dual space
$V^{\ast}$ given by the functional $\left\langle e,\cdot\right\rangle
:V^{\ast}\rightarrow\mathbb{C}$, and the unit ball of $V^{\ast}$. It turns out
that the cone $V_{+}$ is precisely defined by means of its state space
$S\left(  V_{+}\right)  $, namely $v\geq0$ iff $\left\langle v,S\left(
V_{+}\right)  \right\rangle \geq0$ \cite[Proposition 2.20]{PT}. Kadison's
representation is given by the unital mapping
\[
\Phi:V\rightarrow C\left(  S\left(  V_{+}\right)  \right)  ,\quad\Phi\left(
v\right)  \left(  \varphi\right)  =\left\langle v,\varphi\right\rangle ,
\]
where $C\left(  S\left(  V_{+}\right)  \right)  $ is the $C^{\ast}$-algebra of
all complex continuous functions on the Hausdorff compact topological space
$S\left(  V_{+}\right)  $. A detailed investigation of the function systems
(by introducing the concept of a function system itself) was carried out in
\cite{PT} by Paulsen and Tomforde.

In quantum functional analysis \cite{ER}, \cite{Hel} the function systems are
treated as the commutative operator (or quantum) systems, whereas the operator
systems equipped with various unital quantum cones are quantizations of the
underlying function system. An operator system is defined as a unital
self-adjoint subspace of the operator algebra $\mathcal{B}\left(  H\right)  $
of all bounded linear operators on a complex Hilbert space $H$, whereas an
operator space is a subspace of $\mathcal{B}\left(  H\right)  $. The
quantization principles of a unital cone in a unital $\ast$-vector space were
developed in \cite{PaulTT} by Paulsen, Todorov and Tomforde. These
constructions are the categorical background of the abstract operator systems
followed by the key result of Choi and Effros \cite{ECh} on an abstract
characterization of the operator systems (see also \cite[Ch. 13]{Pa}). A
classification problem of the operator systems among the operator space
structures on a unital $\ast$-vector space was investigated in \cite{DSbM}. It
turns out that every operator system construction can be pushed through an
operator space one that results in a suitable unital quantum cone to be
related to. That unital quantum cone in turn generates a new function system
structure on the underlying unital $\ast$-vector space. But practically it is
not clear how to detect the unital cone(s) related to a certain operator space
construction. For example, the projective tensor product or even $1$-sum of
the operator systems does not exist in its proper sense (see \cite{Kav}). The
following simple example could illustrate the case. There is a well defined
$1$-sum $V=\mathbb{C}\overset{1}{\oplus}\mathbb{C}$ of the trivial operator
spaces $\mathbb{C}$ with its operator dual $V^{\ast}=\mathbb{C}\overset{\infty
}{\oplus}\mathbb{C}$. Note that $V$ possesses the natural involution
$z=\left(  z_{0},z_{1}\right)  \mapsto z^{\ast}=\left(  z_{0}^{\ast}%
,z_{1}^{\ast}\right)  $ such that $\left\Vert z^{\ast}\right\Vert
_{1}=\left\Vert z\right\Vert _{1}$, and the unit $e=\left(  2^{-1}%
,2^{-1}\right)  $. In this case, $V_{he}^{\ast}=\left\{  w\in\mathbb{R}%
\overset{\infty}{\oplus}\mathbb{R}:w_{0}+w_{1}=2\right\}  $ and $V_{he}^{\ast
}\cap\operatorname{ball}V^{\ast}=\left\{  \left(  1,1\right)  \right\}  $,
that is, the related state space has been degenerated and it could only define
a non-separated cone in $V$. Thus how to define the operator system $1$-sum
$\mathbb{C}\overset{1}{\oplus}\mathbb{C}$ in this trivial case remains unclear?

The construction of the projective objects within the category of the operator
systems causes a serious challenge to tackle with. An operator system quotient
does not always preserve the related operator space quotient (see discussion
in \cite[Section 4]{Kav2}). Actually, the problem remains open on the level of
the function systems. In the previous example $\mathbb{C}$ is a function
system. What are the projective function systems that is the main target of
the present paper.

We provide a complete solution to the problem by developing the general
concept of \textit{the projective positivity}. Recall that a norm on a $\ast
$-vector space $V$ is said to be a $\ast$\textit{-norm} if the unit
$\operatorname{ball}V$ is hermitian, that is, $\left(  \operatorname{ball}%
V\right)  ^{\ast}=\operatorname{ball}V$. In this case, $\left(  V,V^{\ast
}\right)  $ is a dual $\ast$-pair \cite{Dhjm}, and $V_{h}^{\ast}$ denotes the
real vector space of all bounded hermitian functionals on $V$. Fix a nonzero
$e\in V_{h}$ and as above put $V_{he}^{\ast}=\left\{  y\in V_{h}^{\ast
}:\left\langle e,y\right\rangle =1\right\}  $. Then $S=V_{he}^{\ast}%
\cap\operatorname{ball}V^{\ast}$ is a convex subset of $V_{h}^{\ast}$. We say
that $e$ is \textit{a unit element }(or just \textit{unit})\textit{ for }%
$V$\textit{ }if $S\neq\varnothing$. Every hermitian unit vector $e$ of a
$\ast$-normed space $V$ is a unit automatically. A $\ast$-normed vector space
$V$ with a fixed unit $e$ is called \textit{a unital }$\ast$\textit{-normed
space }$\left(  V,e\right)  $. The set $S$ can be poor enough to define an
order in $V$ as we have seen above. That is the case in many other basic
examples of analysis (see below Sections \ref{secLP}, \ref{secVNM}).

If $\operatorname{ball}V$ is equivalent to the absolute polar $S^{\circ}$ of
$S$, we say that $\operatorname{ball}V$ is \textit{a unital ball}, and
$\left(  V,e\right)  $ is again called\textit{ an abstract function system}
\textit{with its state space }$S$\textit{. }In this case, $\mathfrak{c=}%
\left\{  v\in V_{h}:\left\langle v,S\right\rangle \geq0\right\}  $ is a
separated, closed, unital cone in $V$ such that $S\left(  \mathfrak{c}\right)
=S$ and $\left\Vert \cdot\right\Vert \sim\left\Vert \cdot\right\Vert _{e}$,
where $\left\Vert v\right\Vert _{e}=\sup\left\vert \left\langle
v,S\right\rangle \right\vert $, $v\in V$ (the min-norm). Thus our function
system is a bit more general concept than one considered above. Using again
Kadison's representation, one can identify $V$ with a unital self-adjoint
subspace of $C\left(  S\left(  \mathfrak{c}\right)  \right)  $ up to an order
and topological isomorphism, that is, $\left(  V,\mathfrak{c}\right)  $ is a
concrete function system on $S\left(  \mathfrak{c}\right)  $. If
$\operatorname{ball}V=S^{\circ}$ then $\left\Vert \cdot\right\Vert =\left\Vert
\cdot\right\Vert _{e}$ and we come up with the isometric embedding
$\Phi:V\rightarrow C\left(  S\left(  \mathfrak{c}\right)  \right)  $.

In the case of a non-unital $\operatorname{ball}V$ we consider the following
convex sets $S_{\varepsilon}=V_{he}^{\ast}\cap\varepsilon\operatorname{ball}%
V^{\ast}$ for $\varepsilon>0$. Thus $\operatorname{ball}V$ is unital whenever
$\operatorname{ball}V\sim S_{1}^{\circ}$. But if $\operatorname{ball}V$ is not
unital then we prove that%
\[
\operatorname{ball}V\sim S_{\varepsilon}^{\circ}\text{\quad for every
}\varepsilon>1/\left\Vert e\right\Vert
\]
(see Theorem \ref{propUNS1}). The main result of the paper asserts that
\[
\mathfrak{c}_{\varepsilon}=\left\{  v\in V_{h}:\left\langle v,S_{\varepsilon
}\right\rangle \geq0\right\}
\]
is a closed, unital, cone in $V$ whose state space $S\left(  \mathfrak{c}%
_{\varepsilon}\right)  =S_{\varepsilon}$, and $\left(  V,\mathfrak{c}%
_{\varepsilon}\right)  $ is a concrete function system on $S_{\varepsilon}$
whose $\varepsilon$-norm $\left\Vert v\right\Vert _{\varepsilon}%
=\vee\left\vert \left\langle v,S_{\varepsilon}\right\rangle \right\vert $,
$v\in V$ is equivalent to the original one of $V$ for every $\varepsilon
>1/\left\Vert e\right\Vert $. Thus without changing the original topology of a
unital $\ast$-normed space $\left(  V,e\right)  $ one can endow it with a
compatible function system structure called \textit{a projective function
system}.

As a practical application of the main result we consider a various examples
including $L^{p}$-spaces, the matrix Schatten-spaces and noncommutative
$L^{p}$-spaces of a finite von Neumann algebra. In Section \ref{secLP} we
discuss the case of classical $L_{\mu}^{p}\left(  \mathcal{T}\right)  $,
$1\leq p\leq\infty$ spaces. The unit vectors $e_{p}=\mu\left(  \mathcal{T}%
\right)  ^{-1/p}1$ are indeed units of the related Banach $\ast$-spaces
$L_{\mu}^{p}\left(  \mathcal{T}\right)  $. But their balls
$\operatorname{ball}L_{\mu}^{p}\left(  \mathcal{T}\right)  $ fail to be unital
for all $1\leq p<\infty$. For $p=\infty$ we come up with the abelian von
Neumann algebra $L_{\mu}^{\infty}\left(  \mathcal{T}\right)  $ with the unit
$e_{\infty}=1$, and $\left(  L_{\mu}^{\infty}\left(  \mathcal{T}\right)
,e_{\infty}\right)  $ is an operator system, which in turn is a function
system too. For a conjugate couple $\left(  p,q\right)  $ with $p<\infty$ and
$\varepsilon\geq1$ we define the following convex subsets
\[
S_{\varepsilon}^{p}=\left\{  f\in L^{q}\left(  \mathcal{T}\right)
_{h}:\left\Vert f\right\Vert _{q}\leq\varepsilon,\int fd\mu=\mu\left(
\mathcal{T}\right)  ^{1/p}\right\}  \subseteq\partial\left(  \varepsilon
\operatorname{ball}L^{q}\left(  \mathcal{T}\right)  \right)  .
\]
The state space $S_{1}^{p}$ in $\operatorname{ball}L_{\mu}^{q}\left(
\mathcal{T}\right)  $ is again degenerated to a singleton $\left\{
e_{q}\right\}  $. Nonetheless the unital $\ast$-normed space $L_{\mu}%
^{p}\left(  \mathcal{T}\right)  $ has the projective positivity given by the
separated, closed, unital cone $L_{\mu}^{p}\left(  \mathcal{T}\right)
_{\varepsilon}^{+}$ whose state space is $S_{\varepsilon}^{p}$ for every
$\varepsilon>1$. It turns out that the original cone $L^{p}\left(
\mathcal{T}\right)  _{+}$ of the Banach lattice $L^{p}\left(  \mathcal{T}%
\right)  _{h}$ and the unital cones $L^{p}\left(  \mathcal{T}\right)
_{\varepsilon}^{+}$ are totally different cones, they are not comparable
unless $\operatorname{supp}\left(  \mu\right)  $ is finite (see Theorem
\ref{thmainComm}). Moreover, in the finite dimensional nontrivial case we have
$\ell^{p}\left(  n\right)  _{+}=\ell^{p}\left(  n\right)  _{\varepsilon}^{+}$
only if $n=2$ and $\varepsilon=2^{1/p}$.

In Section \ref{secVNM} we first consider the full matrix algebra
$\mathbb{M}_{n}$ with the unit matrix $e$ for $n>1$. The algebra
$\mathbb{M}_{n}$ equipped with the Schatten $p$-norm is denoted by
$\mathfrak{S}_{p}$, $1\leq p\leq\infty$. In this case, $\mathfrak{S}_{\infty
}^{\ast}=\mathfrak{S}_{1}$, and $\mathfrak{S}_{\infty}^{+}$ is the standard
unital cone of the positive elements with its state space $S_{\tau}=\left\{
y\in\mathfrak{S}_{1}:y\geq0,\tau\left(  y\right)  =1\right\}  $ of the density
matrices from $\mathbb{M}_{n}$, that is, $\left(  \mathfrak{S}_{\infty
},e\right)  $ is a function system. But that is not the case for the rest
$\left(  \mathfrak{S}_{p},e\right)  $, which are unital $\ast$-normed spaces
with their common unit $e$. Nonetheless $\left(  \mathfrak{S}_{p},e\right)  $
is a function system equipped with the projective cone $\mathfrak{S}_{p}^{+}$
given by $\varepsilon=1>1/\left\Vert e\right\Vert _{p}$ such that
$\mathfrak{S}_{\infty,\varepsilon}^{+}\subseteq\mathfrak{S}_{1}^{+}%
\subseteq\mathfrak{S}_{p}^{+}\subseteq\mathfrak{S}_{\infty}^{+}$,
$\varepsilon\geq n$ is an increasing family of the unital cones with their
state spaces
\[
S\left(  \mathfrak{S}_{p}^{+}\right)  =\left\{  \left(  1+r\right)
z-rw:z,w\in S_{\tau},zw=0,\left\Vert \left(  1+r\right)  z+rw\right\Vert
_{q}\leq1\right\}
\]
for all conjugate couples $\left(  p,q\right)  $ (see Theorem \ref{thMP1}).
Moreover, $\mathfrak{S}_{p}^{+}$ is reduced to the original cone of
$\mathfrak{S}_{p}$ only if $n=2$ and $\varepsilon=2^{1/p}$. Thus only for the
two-level quantum system (qubit) \cite[Ch. 2]{Hol}, the projective positivity
is reduced to the original one precisely. The (finite dimensional) geometry of
the state spaces $S\left(  \mathfrak{S}_{p}^{+}\right)  $ is described in the
proof of Theorem \ref{thMP1}.

Finally, the obtained result for $\mathbb{M}_{n}$ is generalized in Theorem
\ref{thFVNA1} to the case of a finite von Neumann algebra $\mathcal{M}$ with a
faithful, finite, normal trace $\tau$, $\tau\left(  e\right)  >1$. It is also
proved (see Proposition \ref{propFVNA1}) that the unital cone $L^{2}\left(
\mathcal{M},\tau\right)  _{\varepsilon}^{+}$ for $\varepsilon=\left(
2/\tau\left(  e\right)  \right)  ^{1/2}>1/\sqrt{\tau\left(  e\right)  }$ of
the unital Hilbert $\ast$-space $\left(  L^{2}\left(  \mathcal{M},\tau\right)
,e\right)  $ is reduced to one from \cite{Dfaa19}. Thus $L^{2}\left(
\mathcal{M},\tau\right)  _{\varepsilon}^{+}$ is the positivity that stands
behind of Pisier's operator Hilbert space \cite[3.5]{ER}, \cite[7.4]{Hel}.

\section{The preliminary material on operator systems}

To avoid a possible discomfort during the reading of the new material we
present in this section some key results on operators systems by adding up
some new facts too. For some assertions it is really hard to indicate the
related direct references, that is why we support the presentation with
Appendix Section \ref{SecApp1}. For the basic lattice operations $\inf$ and
$\sup$ we use the notations $\wedge$ and $\vee$, respectively. The convex hull
of a subset $S$ in a vector space is denoted by $\operatorname{co}\left(
S\right)  $ whereas $\operatorname{abc}\left(  S\right)  $ denotes the
absolutely convex hull of $S$. The characteristic function of a set $S$ is
denoted by $\left[  S\right]  $.

\subsection{Operator systems\label{subsecOS22}}

By an operator system (or noncommutative function system) $V$ on a Hilbert
space $H$ we mean a unital self-adjoint subspace of $\mathcal{B}\left(
H\right)  $. Thus $V\subseteq\mathcal{B}\left(  H\right)  $ is a subspace such
that $e\in V$ and $V^{\ast}=V$, where $e$ is the unit of $\mathcal{B}\left(
H\right)  $. A unital $C^{\ast}$-algebra is an example of an operator system.
The real subspace of all hermitian elements of an operator system $V$ is
denoted by $V_{h}$, whereas $V_{+}=V\mathcal{\cap}\mathcal{B}\left(  H\right)
_{+}$ is the (algebraically or topologically (see \cite{Dhjm})) closed cone of
its positive elements. The cone $V_{+}$ is separated, that is, $V_{+}%
\cap-V_{+}=\left\{  0\right\}  $. Moreover, it is a \textit{unital cone }in
the sense of that $V_{+}-e$ absorbs all $V_{h}$. Namely, for every $x\in
V_{h}$ we have $x\in\left\Vert x\right\Vert \left(  V_{+}-e\right)  $. Note
also that $x=\left(  x+\left\Vert x\right\Vert e\right)  -\left\Vert
x\right\Vert e\in V_{+}-V_{+}$ for every $x\in V_{h}$, that is, $V_{h}%
=V_{+}-V_{+}$, which in turn implies that $V=\left(  V_{+}-V_{+}\right)
+i\left(  V_{+}-V_{+}\right)  $.

A linear functional $\mu:V\rightarrow\mathbb{C}$ defines its conjugate
$\mu^{\ast}:V\rightarrow\mathbb{C}$ to be $\left\langle x,\mu^{\ast
}\right\rangle =\left\langle x^{\ast},\mu\right\rangle ^{\ast}$, $x\in V$. If
$\mu=\mu^{\ast}$ then $\mu$ is called \textit{a hermitian functional on} $V$,
and the real vector space of all hermitian functionals on $V$ is denoted by
$V_{h}^{\ast}$. Notice that $\mu\in V_{h}^{\ast}$ iff $\left\langle V_{h}%
,\mu\right\rangle \subseteq\mathbb{R}$. Indeed, if $\mu\in V_{h}^{\ast}$ and
$x\in V_{h}$, then $\left\langle x,\mu\right\rangle ^{\ast}=\left\langle
x^{\ast},\mu\right\rangle ^{\ast}=\left\langle x,\mu^{\ast}\right\rangle
=\left\langle x,\mu\right\rangle $, that is, $\left\langle x,\mu\right\rangle
\in\mathbb{R}$. Conversely, $\left\langle V_{h},\mu\right\rangle
\subseteq\mathbb{R}$ implies that
\[
\left\langle x,\mu^{\ast}\right\rangle =\left\langle \operatorname{Re}%
x+i\operatorname{Im}x,\mu^{\ast}\right\rangle =\left\langle \operatorname{Re}%
x-i\operatorname{Im}x,\mu\right\rangle ^{\ast}=\left\langle \operatorname{Re}%
x,\mu\right\rangle +i\left\langle \operatorname{Im}x,\mu\right\rangle
=\left\langle x,\mu\right\rangle
\]
for all $x\in V$, that is, $\mu=\mu^{\ast}$. Moreover,
\begin{equation}
\left\Vert \mu\right\Vert =\vee\left\vert \left\langle \operatorname{ball}%
V_{h},\mu\right\rangle \right\vert \text{ for all }\mu\in V_{h}^{\ast}.
\label{nHf}%
\end{equation}
Indeed, for every $x\in\operatorname{ball}V$ with nonzero $\left\langle
x,\mu\right\rangle $ we have $\left\vert \left\langle x,\mu\right\rangle
\right\vert =\left\langle \theta x,\mu\right\rangle =\operatorname{Re}%
\left\langle \theta x,\mu\right\rangle =\left\langle \operatorname{Re}\left(
\theta x\right)  ,\mu\right\rangle $ for suitable $\theta\in\mathbb{C}$,
$\left\vert \theta\right\vert =1$. But $\left\Vert \operatorname{Re}\left(
\theta x\right)  \right\Vert \leq\left\Vert x\right\Vert \leq1$, therefore
$\left\vert \left\langle x,\mu\right\rangle \right\vert \leq\left\vert
\left\langle \operatorname{ball}V_{h},\mu\right\rangle \right\vert $. Hence
$\left\Vert \mu\right\Vert =\vee\left\vert \left\langle \operatorname{ball}%
V,\mu\right\rangle \right\vert \leq\left\vert \left\langle \operatorname{ball}%
V_{h},\mu\right\rangle \right\vert \leq\left\Vert \mu\right\Vert $.

A linear functional $\mu:V\rightarrow\mathbb{C}$ is said to be
\textit{positive} if $\left\langle V_{+},\mu\right\rangle \geq0$. In this case
$\mu=\mu^{\ast}$ automatically. Indeed, $\left\langle V_{h},\mu\right\rangle
=\left\langle V_{+},\mu\right\rangle -\left\langle V_{+},\mu\right\rangle
\subseteq\mathbb{R}$, which in turn implies $\mu\in V_{h}^{\ast}$. The set of
all positive functionals on $V$ is denoted by $V_{+}^{\ast}$. It turns out to
be a subcone of $V_{h}^{\ast}$. If $V=C\left(  \mathcal{T}\right)  $ is the
abelian $C^{\ast}$-algebra of all continuous functions on a compact Hausdorff
topological space $\mathcal{T}$, then $C\left(  \mathcal{T}\right)  ^{\ast
}=M\left(  \mathcal{T}\right)  $ is the space of all Radon charges on
$\mathcal{T}$. In this case, $M\left(  \mathcal{T}\right)  _{+}$ consists of
all positive Radon measures on $\mathcal{T}$. A Radon charge $\mu\in M\left(
\mathcal{T}\right)  $ is positive (that is, $\mu\geq0$) iff $\left\Vert
\mu\right\Vert =\left\langle 1,\mu\right\rangle =\mu\left(  \mathcal{T}%
\right)  $. The fact can be proven for a linear functional over an operator
system (see below Lemma \ref{lemOS0} from Appendix Section \ref{SecApp1}).

\subsection{The states of an operator system\label{subsecSOS11}}

A unital positive functional on $V$ is called \textit{a state of the cone
}$V_{+}$, and we put
\[
S\left(  V_{+}\right)  =\left\{  \varphi\in V_{+}^{\ast}:\left\langle
e,\varphi\right\rangle =1\right\}
\]
to be \textit{the state space of the cone} $V_{+}$. In the case of $V=C\left(
\mathcal{T}\right)  $ we come up with the convex set $P\left(  \mathcal{T}%
\right)  $ of probability measures on $\mathcal{T}$, that is, $P\left(
\mathcal{T}\right)  =S\left(  C\left(  \mathcal{T}\right)  _{+}\right)  $ (see
Subsection \ref{subsecEP}). We also put $V_{he}^{\ast}=\left\{  \varphi\in
V_{h}^{\ast}:\left\langle e,\varphi\right\rangle =1\right\}  $ to be the real
hyperplane of all unital, hermitian functionals on $V$. Using Lemma
\ref{lemOS0}, we conclude that%
\[
S\left(  V_{+}\right)  =\left\{  \varphi\in\operatorname{ball}V^{\ast
}:\left\Vert \varphi\right\Vert =\left\langle e,\varphi\right\rangle
=1\right\}  =V_{he}^{\ast}\cap\operatorname{ball}V^{\ast}.
\]
In particular, $S\left(  V_{+}\right)  $ is a $w^{\ast}$-compact subset of
$\operatorname{ball}V^{\ast}$. For every unit vector $\zeta\in H$ the
functional $\omega_{\zeta}:V\mathcal{\rightarrow}\mathbb{C}$, $\omega_{\zeta
}\left(  x\right)  =\left(  x\zeta,\zeta\right)  $ belongs to $S\left(
V_{+}\right)  $. In particular,
\begin{equation}
x\in V_{+}\quad\text{iff\quad}\left\langle x,S\left(  V_{+}\right)
\right\rangle \geq0, \label{sp}%
\end{equation}
which means that the operator system positivity is\textit{ a state positivity}.

Using the state space $S\left(  V_{+}\right)  $, one can define a new (state)
norm $\left\Vert \cdot\right\Vert _{e}$ on $V$ in the following way
$\left\Vert x\right\Vert _{e}=\vee\left\vert \left\langle x,S\left(
V_{+}\right)  \right\rangle \right\vert $.

\begin{proposition}
\label{propAp1}The norm $\left\Vert \cdot\right\Vert _{e}$ on $V$ is a unital
$\ast$-norm, which is equivalent to the original norm of $V$. Moreover,
$\left\Vert x\right\Vert _{e}=\left\Vert x\right\Vert $, $x\in V_{h}$, and
$\left\Vert \cdot\right\Vert _{e}$ is an order norm in the sense of
\[
\left\Vert x\right\Vert _{e}=\wedge\left\{  r>0:-re\leq x\leq re\right\}
\text{ for all }x\in V_{h}.
\]
In particular, $\left\Vert \mu\right\Vert _{e}=\left\Vert \mu\right\Vert $ for
every $\mu\in V_{h}^{\ast}$.
\end{proposition}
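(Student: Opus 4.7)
The plan is to establish the assertions of Proposition~\ref{propAp1} in four stages: (i) norm axioms for $\|\cdot\|_e$, including $\ast$-invariance; (ii) the identity $\|x\|_e=\|x\|$ on $V_h$ together with the order-norm formula; (iii) topological equivalence of $\|\cdot\|_e$ with the original norm on all of $V$; (iv) the dual identity $\|\mu\|_e=\|\mu\|$ on $V_h^{\ast}$, from which the unital property is read off. For (i), subadditivity, positive homogeneity and $\ast$-invariance follow directly from $S(V_+)$ being a set of hermitian functionals: $|\langle x^{\ast},\varphi\rangle|=|\overline{\langle x,\varphi\rangle}|=|\langle x,\varphi\rangle|$ when $\varphi=\varphi^{\ast}$. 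For definiteness, if $\|x\|_e=0$ decompose $x=u+iv$ with $u,v\in V_h$; hermicity of each state gives $\langle u,\varphi\rangle=\langle v,\varphi\rangle=0$ on $S(V_+)$, so by (\ref{sp}) both $\pm u,\pm v$ lie in $V_+\cap(-V_+)=\{0\}$ and $x=0$.

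For (ii), the inequality $\|x\|_e\leq\|x\|$ is immediate from $S(V_+)\subseteq\operatorname{ball}V^{\ast}$. Conversely, the vector states $\omega_\zeta\in S(V_+)$ together with the standard operator identity $\|x\|=\sup_{\|\zeta\|=1}|\omega_\zeta(x)|$ for a self-adjoint $x$ give $\|x\|\leq\|x\|_e$. For the order-norm expression, set $r_0=\wedge\{r>0:-re\leq x\leq re\}$. Evaluating a state $\varphi$ at $re\pm x\in V_+$ yields $|\langle x,\varphi\rangle|\leq r$, hence $\|x\|_e\leq r_0$. Conversely, $\langle\|x\|_e e\pm x,\varphi\rangle=\|x\|_e\pm\langle x,\varphi\rangle\geq 0$ for every $\varphi\in S(V_+)$, so (\ref{sp}) forces $\|x\|_e e\pm x\in V_+$ and $r_0\leq\|x\|_e$.

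For (iii), write $x=u+iv$ with $u=(x+x^{\ast})/2$, $v=(x-x^{\ast})/(2i)$, so that $\|u\|,\|v\|\leq\|x\|$. For any $\varphi\in S(V_+)$ the numbers $\langle u,\varphi\rangle,\langle v,\varphi\rangle$ are real, whence $|\langle x,\varphi\rangle|\geq\max(|\langle u,\varphi\rangle|,|\langle v,\varphi\rangle|)$; taking suprema yields $\|x\|_e\geq\max(\|u\|_e,\|v\|_e)=\max(\|u\|,\|v\|)\geq\|x\|/2$, and combined with $\|x\|_e\leq\|x\|$ this gives equivalence of the two norms on all of $V$.

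Finally, for (iv), I would apply identity (\ref{nHf}) twice: to $\|\cdot\|$, giving $\|\mu\|=\vee|\langle\operatorname{ball}V_h,\mu\rangle|$; and to the $\ast$-norm $\|\cdot\|_e$, giving the analogous formula over the unit ball of $V_h$ in the $\|\cdot\|_e$-norm. Since step (ii) identifies these two balls, the suprema coincide and $\|\mu\|_e=\|\mu\|$. The unital property of $(V,\|\cdot\|_e,e)$ then follows at once: $\|e\|_e=1$, and every $\varphi\in S(V_+)$ satisfies $\|\varphi\|_e=\|\varphi\|=1$, so its state space for the new structure is nonempty. The main obstacle in the argument is arguably the reverse inequality in the order-norm formula, where a numerical state estimate must be upgraded to cone membership; this is precisely where the state positivity characterization (\ref{sp}) of $V_+$ is indispensable.
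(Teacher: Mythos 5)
Your proposal is correct and follows essentially the same route as the paper: vector states give $\left\Vert x\right\Vert =\left\Vert x\right\Vert _{e}$ on $V_{h}$, the state positivity (\ref{sp}) upgrades the numerical estimate to the order-norm formula, the real/imaginary decomposition yields the factor-$2$ equivalence, and (\ref{nHf}) applied to both $\ast$-norms gives $\left\Vert \mu\right\Vert _{e}=\left\Vert \mu\right\Vert $. The only (harmless) deviations are cosmetic: you check definiteness and the unital property directly, where the paper gets them from the norm equivalence and the remark that hermitian unit vectors are automatically units.
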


\begin{proof}
For each $x\in V$ we have
\[
\left\Vert x\right\Vert _{e}\leq\left\Vert x\right\Vert \left(  \vee\left\Vert
S\left(  V_{+}\right)  \right\Vert \right)  =\left\Vert x\right\Vert \left(
\vee\left\langle e,S\left(  V_{+}\right)  \right\rangle \right)  =\left\Vert
x\right\Vert
\]
and $\left\Vert e\right\Vert _{e}=\vee\left\vert \left\langle e,S\left(
V_{+}\right)  \right\rangle \right\vert =1$. Moreover, $\left\Vert x^{\ast
}\right\Vert _{e}=\vee\left\vert \left\langle x^{\ast},S\left(  V_{+}\right)
\right\rangle \right\vert =\vee\left\vert \left\langle x,S\left(
V_{+}\right)  \right\rangle ^{\ast}\right\vert =\left\Vert x\right\Vert _{e}$
and
\[
\left\Vert \operatorname{Re}x\right\Vert _{e}=\vee\left\vert \left\langle
\operatorname{Re}x,S\left(  V_{+}\right)  \right\rangle \right\vert
=\vee\left\vert \operatorname{Re}\left\langle x,S\left(  V_{+}\right)
\right\rangle \right\vert \leq\left\Vert x\right\Vert _{e}.
\]
Similarly, $\left\Vert \operatorname{Im}x\right\Vert _{e}\leq\left\Vert
x\right\Vert _{e}$. If $x\in V_{h}$ then (see \cite[3.2.25]{Ped})%
\[
\left\Vert x\right\Vert =\vee\left\{  \left\vert \left(  x\zeta,\zeta\right)
\right\vert :\left\Vert \zeta\right\Vert =1\right\}  =\vee\left\{  \left\vert
\left\langle x,\omega_{\zeta}|V\right\rangle \right\vert :\left\Vert
\zeta\right\Vert =1\right\}  \leq\vee\left\vert \left\langle x,S\left(
V_{+}\right)  \right\rangle \right\vert =\left\Vert x\right\Vert _{e},
\]
that is, $\left\Vert x\right\Vert _{e}=\left\Vert x\right\Vert $ for all $x\in
V_{h}$. In particular, for each $x\in V$ we have $\left\Vert x\right\Vert
\leq\left\Vert \operatorname{Re}x\right\Vert +\left\Vert \operatorname{Im}%
x\right\Vert \leq2\left\Vert x\right\Vert _{e}\leq2\left\Vert x\right\Vert $.
Thus $\left\Vert \cdot\right\Vert $ and $\left\Vert \cdot\right\Vert _{e}$ are
equivalent norms on $V$. If $\alpha=\wedge\left\{  r>0:-re\leq x\leq
re\right\}  $ for $x\in V_{h}$, then
\begin{align*}
\alpha &  =\wedge\left\{  r>0:re\pm x\geq0\right\}  =\wedge\left\{
r>0:\left\langle re\pm x,\varphi\right\rangle \geq0,\varphi\in S\left(
V_{+}\right)  \right\} \\
&  =\wedge\left\{  r>0:r\pm\left\langle x,\varphi\right\rangle \geq
0,\varphi\in S\left(  V_{+}\right)  \right\}  =\wedge\left\{  r>0:\vee
\left\vert \left\langle x,S\left(  V_{+}\right)  \right\rangle \right\vert
\leq r\right\} \\
&  =\vee\left\vert \left\langle x,S\left(  V_{+}\right)  \right\rangle
\right\vert =\left\Vert x\right\Vert _{e}\text{,}%
\end{align*}
that is, $\left\Vert \cdot\right\Vert _{e}$ is an order norm on $V$. Finally,
based on (\ref{nHf}), for every $\mu\in V_{h}^{\ast}$ we have
\[
\left\Vert \mu\right\Vert _{e}=\vee\left\vert \left\langle \operatorname{ball}%
\left\Vert \cdot\right\Vert _{e},\mu\right\rangle \right\vert =\vee\left\{
\left\vert \left\langle x,\mu\right\rangle \right\vert :x\in V_{h},\left\Vert
x\right\Vert _{e}\leq1\right\}  =\vee\left\vert \left\langle
\operatorname{ball}V_{h},\mu\right\rangle \right\vert =\left\Vert
\mu\right\Vert ,
\]
that is, $\left\Vert \mu\right\Vert _{e}=\left\Vert \mu\right\Vert $.
\end{proof}

There is a well defined unital $\ast$-linear mapping $\Phi:V\rightarrow
C\left(  S\left(  V_{+}\right)  \right)  $, $\Phi\left(  x\right)  \left(
\varphi\right)  =\left\langle x,\varphi\right\rangle $, $x\in V$, $\varphi\in
S\left(  V_{+}\right)  $ (Kadison's representation). Notice that $\Phi\left(
V_{+}\right)  \subseteq C\left(  S\left(  V_{+}\right)  \right)  _{+}$.

\begin{corollary}
\label{corOKE31}The $\ast$-representation $\Phi:\left(  V,\left\Vert
\cdot\right\Vert _{e}\right)  \rightarrow C\left(  S\left(  V_{+}\right)
\right)  $ is an order isomorphism and isometric mapping onto its range.
\end{corollary}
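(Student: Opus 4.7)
The plan is to derive both halves of the corollary directly from Proposition \ref{propAp1} and the state-positivity criterion (\ref{sp}). I would split the statement into two essentially independent claims: that $\Phi$ is an isometry when $V$ carries $\|\cdot\|_e$, and that $\Phi(x) \geq 0$ in $C(S(V_+))$ characterises exactly $x \in V_+$ (order isomorphism onto the range). Injectivity of $\Phi$ is then subsumed by the isometry.

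The isometry will be the first and easier step. By definition of the supremum norm on $C(S(V_+))$ one has
\[
\|\Phi(x)\|_\infty = \vee\{|\langle x, \varphi\rangle| : \varphi \in S(V_+)\} = \|x\|_e,
\]
so $\Phi$ is automatically injective and isometric with respect to $\|\cdot\|_e$. It only remains to note in passing that $\Phi(x)$ is indeed continuous on the $w^\ast$-compact set $S(V_+)$, since it is itself a $w^\ast$-evaluation.

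The order isomorphism part will be handled in two substeps. The inclusion $\Phi(V_+) \subseteq C(S(V_+))_+$ has already been recorded immediately before the statement, so the real content is the converse: given $\Phi(x) \geq 0$, to produce $x \in V_+$. My approach is first to extract hermiticity of $x$ from the fact that $\Phi(x)(\varphi) = \langle x, \varphi\rangle$ is a non-negative real on every state, and then to close the argument by appeal to (\ref{sp}). Writing $x = x_1 + i x_2$ with $x_j \in V_h$, realness on all states forces $\langle x_2, \varphi\rangle = 0$ for every $\varphi \in S(V_+)$, hence $\|x_2\|_e = 0$; but $\|\cdot\|_e$ coincides with $\|\cdot\|$ on $V_h$ by Proposition \ref{propAp1}, so $x_2 = 0$ and $x \in V_h$. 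Then (\ref{sp}) delivers $x \in V_+$.

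The main obstacle, if one wants to call it such, is precisely this extraction of hermiticity: the states have to separate the hermitian part of $V$, and without the norm identity $\|\cdot\|_e = \|\cdot\|$ on $V_h$ from Proposition \ref{propAp1} this step would not be automatic. Once that input is available, the rest of the argument is essentially formal bookkeeping.
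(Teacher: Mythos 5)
Your proposal is correct and follows essentially the same route as the paper's proof: the isometry is read off directly from the definition of $\left\Vert \cdot\right\Vert _{e}$ and the sup norm on $C\left(  S\left(  V_{+}\right)  \right)  $, and positivity is pulled back through the state-positivity criterion (\ref{sp}). Your additional step extracting hermiticity of $x$ from $\Phi\left(  x\right)  \geq0$ via $\left\Vert x_{2}\right\Vert _{e}=\left\Vert x_{2}\right\Vert $ is a harmless refinement; the paper sidesteps it by formulating the conclusion as $\Phi\left(  V_{+}\right)  =C\left(  S\left(  V_{+}\right)  \right)  _{+}\cap\Phi\left(  V_{h}\right)  $, i.e., comparing cones only within the image of the hermitian part.
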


\begin{proof}
Based on Proposition \ref{propAp1}, we have
\[
\left\Vert \Phi\left(  x\right)  \right\Vert _{\infty}=\vee\left\vert
\Phi\left(  x\right)  \left(  S\left(  V_{+}\right)  \right)  \right\vert
=\vee\left\vert \left\langle x,S\left(  V_{+}\right)  \right\rangle
\right\vert =\left\Vert x\right\Vert _{e}%
\]
for all $x\in V$, that is, $\Phi$ is an isometry. Furthermore%
\[
\Phi\left(  x\right)  \in C\left(  S\left(  V_{+}\right)  \right)
_{+}\Leftrightarrow\left\langle x,S\left(  V_{+}\right)  \right\rangle
\geq0\Leftrightarrow x\in V_{+}%
\]
by (\ref{sp}). Thus $\Phi\left(  V_{+}\right)  =C\left(  S\left(
V_{+}\right)  \right)  _{+}\cap\Phi\left(  V_{h}\right)  $, which means $\Phi$
is an order isomorphism onto its range.
\end{proof}

Thus if the original norm of $V$ equals to $\left\Vert \cdot\right\Vert _{e}$
then $V$ turns out to be a unital $\ast$-subspace of $C\left(  S\left(
V_{+}\right)  \right)  $, that is, $V$ is \textit{a concrete function system
}up to an isometry.

\begin{corollary}
\label{corOKE44}Let $V$ be an operator system. Then $V$ is a function system
iff $\left\Vert \cdot\right\Vert =\left\Vert \cdot\right\Vert _{e}$.
\end{corollary}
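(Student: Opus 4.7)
The statement sits directly on top of Corollary \ref{corOKE31} and Proposition \ref{propAp1}, and I would split it into the two implications of the biconditional.

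For the \emph{if} direction, I plan to simply invoke Corollary \ref{corOKE31}: once $\left\Vert \cdot \right\Vert = \left\Vert \cdot \right\Vert_{e}$, the Kadison representation $\Phi : V \rightarrow C(S(V_{+}))$ is simultaneously isometric and an order isomorphism onto its range, thereby exhibiting $V$ as a unital self-adjoint subspace of the commutative $C^{\ast}$-algebra $C(S(V_{+}))$, i.e., a concrete function system up to isometry.

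For the \emph{only if} direction, my strategy is to manufacture enough states of $V_{+}$ to recover the original operator norm from the ambient commutative structure. If $V$ is identified, via an isometric unital $\ast$-order embedding $\Psi$, with a unital self-adjoint subspace of some $C(\mathcal{T})$, then for each $t \in \mathcal{T}$ the point evaluation $\delta_{t}$ pulled back along $\Psi$ is a unital positive functional on $V$ and hence a state in $S(V_{+})$. This immediately yields $\left\Vert x \right\Vert_{e} \geq \vee_{t \in \mathcal{T}} \left\vert \Psi(x)(t) \right\vert = \left\Vert \Psi(x) \right\Vert_{\infty} = \left\Vert x \right\Vert$ for every $x \in V$, while the reverse inequality $\left\Vert x \right\Vert_{e} \leq \left\Vert x \right\Vert$ is already contained in Proposition \ref{propAp1}. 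Combining both gives $\left\Vert \cdot \right\Vert = \left\Vert \cdot \right\Vert_{e}$.

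I do not anticipate a substantial obstacle. The only interpretive care required is to read \textquotedblleft function system\textquotedblright\ in the statement as \textquotedblleft concrete function system up to isometry,\textquotedblright\ in agreement with the sentence immediately preceding the corollary. Under that reading, the entire argument rests only on the fact that point evaluations on a commutative $C^{\ast}$-algebra are states and that this property is preserved under an isometric unital $\ast$-order embedding.
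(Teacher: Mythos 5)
Your proposal is correct, and for the ``if'' direction it coincides with the paper's proof: both just invoke Corollary \ref{corOKE31}. In the ``only if'' direction you take a slightly leaner route than the paper. The paper sandwiches the two norms by (i) extending each state of $V_{+}$, via Hahn--Banach, to a norm-one unital functional on $C\left(  \mathcal{T}\right)  $, recognizing it as a probability measure by Lemma \ref{lemOS0} (and using Proposition \ref{propBPM}), which gives $\left\Vert f\right\Vert _{e}\leq\left\Vert f\right\Vert _{\infty}$, and (ii) using the point evaluations $\delta_{t}|V$ as states to get $\left\Vert f\right\Vert _{\infty}\leq\left\Vert f\right\Vert _{e}$. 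You keep step (ii) but replace step (i) by the inequality $\left\Vert x\right\Vert _{e}\leq\left\Vert x\right\Vert $ already established in Proposition \ref{propAp1} (states are unital contractions), so you dispense with the Hahn--Banach extension and with Proposition \ref{propBPM} altogether; that economy is legitimate, since $\left\Vert \cdot\right\Vert _{e}\leq\left\Vert \cdot\right\Vert $ holds for any operator system. One caveat: you justify the positivity of $\delta_{t}\circ\Psi$ by assuming the identification $\Psi$ is a $\ast$-\emph{order} embedding, whereas the notion of a concrete function system fixed in the sentence preceding the corollary only supplies a unital, self-adjoint, isometric embedding, with no order compatibility assumed. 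This is not a genuine gap, because $\delta_{t}\circ\Psi$ is then a unital contraction on the operator system $V$, hence positive by Lemma \ref{lemOS0} (the criterion $\left\Vert \mu\right\Vert =\left\langle e,\mu\right\rangle $), which is exactly how the paper obtains $P\left(  \mathcal{T}\right)  |V\subseteq S\left(  V_{+}\right)  $; with that one-line substitution your argument stands under the paper's weaker reading of ``function system.''
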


\begin{proof}
Suppose that $V$ is a function system, that is, $V\subseteq C\left(
\mathcal{T}\right)  $ is a unital self-adjoint subspace for some Hausdorff
compact topological space $\mathcal{T}$. If $\varphi\in S\left(  V_{+}\right)
$ then $\varphi\in\operatorname{ball}V^{\ast}$ and it admits a bounded linear
extension up to some $\mu\in M\left(  \mathcal{T}\right)  $ with $\left\Vert
\mu\right\Vert =1=\left\langle e,\varphi\right\rangle =\left\langle
e,\mu\right\rangle $ thanks to Hahn-Banach extension Theorem. By Lemma
\ref{lemOS0}, we obtain that $\mu\in P\left(  \mathcal{T}\right)  $. Since
$P\left(  \mathcal{T}\right)  |V\subseteq S\left(  V_{+}\right)  $, it follows
that (see Proposition \ref{propBPM})%
\begin{align*}
\left\Vert f\right\Vert _{e}  &  =\vee\left\vert \left\langle f,S\left(
V_{+}\right)  \right\rangle \right\vert \leq\vee\left\vert \left\langle
f,P\left(  \mathcal{T}\right)  |V\right\rangle \right\vert \leq\vee\left\vert
\left\langle f,\operatorname{ball}M\left(  \mathcal{T}\right)  \right\rangle
\right\vert =\left\Vert f\right\Vert _{\infty}\\
&  =\vee\left\vert f\left(  \mathcal{T}\right)  \right\vert =\vee\left\{
\left\vert \left\langle f,\delta_{t}|V\right\rangle \right\vert :t\in
\mathcal{T}\right\}  \leq\vee\left\vert \left\langle f,S\left(  V_{+}\right)
\right\rangle \right\vert =\left\Vert f\right\Vert _{e},
\end{align*}
that is, $\left\Vert f\right\Vert _{e}=\left\Vert f\right\Vert _{\infty}$ for
all $f\in V$. The reverse implication follows from Corollary \ref{corOKE31}.
\end{proof}

\begin{corollary}
\label{corabcSV}If $V$ is an operator system then $\operatorname{ball}%
V_{h}^{\ast}\subseteq\operatorname{abc}S\left(  V_{+}\right)  $. In
particular, $2^{-1}\operatorname{ball}V^{\ast}\subseteq\operatorname{abc}%
S\left(  V_{+}\right)  \subseteq\operatorname{ball}V^{\ast}$.
\end{corollary}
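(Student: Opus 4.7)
My plan is to reduce the statement to its hermitian part and then apply the Jordan decomposition for hermitian functionals on an operator system. The inclusion $\operatorname{abc}S\left(V_{+}\right)\subseteq\operatorname{ball}V^{\ast}$ is immediate because every state has norm one and the absolutely convex hull of a subset of the unit ball stays in the unit ball. For the inclusion $2^{-1}\operatorname{ball}V^{\ast}\subseteq\operatorname{abc}S\left(V_{+}\right)$ it suffices to establish the hermitian claim $\operatorname{ball}V_{h}^{\ast}\subseteq\operatorname{abc}S\left(V_{+}\right)$: given $\mu\in\operatorname{ball}V^{\ast}$, its real and imaginary parts $\operatorname{Re}\mu=\left(\mu+\mu^{\ast}\right)/2$ and $\operatorname{Im}\mu=\left(\mu-\mu^{\ast}\right)/\left(2i\right)$ lie in $\operatorname{ball}V_{h}^{\ast}$, and once each is written as an element of $\operatorname{abc}S\left(V_{+}\right)$ the combination $2^{-1}\mu=\tfrac{1}{2}\operatorname{Re}\mu+\tfrac{i}{2}\operatorname{Im}\mu$ falls into $\operatorname{abc}S\left(V_{+}\right)$ after collecting coefficients (their absolute values sum to at most $1$).

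The heart of the argument is therefore to prove $\operatorname{ball}V_{h}^{\ast}\subseteq\operatorname{abc}S\left(V_{+}\right)$. I would invoke the Jordan decomposition of a hermitian functional on an operator system: for every $\mu\in V_{h}^{\ast}$ there exist $\mu_{\pm}\in V_{+}^{\ast}$ with $\mu=\mu_{+}-\mu_{-}$ and $\left\Vert \mu\right\Vert =\left\Vert \mu_{+}\right\Vert +\left\Vert \mu_{-}\right\Vert $. A direct route is to take a norm-preserving Hahn--Banach extension of $\mu$ to a hermitian functional on $\mathcal{B}\left(H\right)$, apply the Jordan decomposition in $\mathcal{B}\left(H\right)^{\ast}$, and restrict the resulting positive and negative parts to $V$; the restrictions remain positive on $V_{+}=V\cap\mathcal{B}\left(H\right)_{+}$, and the reverse inequality $\left\Vert \mu\right\Vert \leq\left\Vert \mu_{+}|V\right\Vert +\left\Vert \mu_{-}|V\right\Vert $ forces the norm identity to survive restriction.

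Given such a decomposition with $\left\Vert \mu\right\Vert \leq1$, I set $t=\left\Vert \mu_{+}\right\Vert $ and $s=\left\Vert \mu_{-}\right\Vert $. By Lemma \ref{lemOS0} applied over $V$, $t=\left\langle e,\mu_{+}\right\rangle $ and $s=\left\langle e,\mu_{-}\right\rangle $, so $\mu_{+}/t,\mu_{-}/s\in S\left(V_{+}\right)$ whenever the denominators are nonzero (otherwise the corresponding term is zero and is simply dropped). Since $t+s=\left\Vert \mu\right\Vert \leq1$, the identity
\[
\mu=t\left(\mu_{+}/t\right)+\left(-s\right)\left(\mu_{-}/s\right)
\]
exhibits $\mu$ as a member of $\operatorname{abc}S\left(V_{+}\right)$. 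The only real obstacle is securing the Jordan decomposition itself, which is not proved in the excerpt; it is a classical fact for operator systems, and its norm-additivity is precisely the ingredient that guarantees $t+s\leq 1$ and hence the desired membership in the absolutely convex hull.
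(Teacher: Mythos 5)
Your argument is correct, and at the level of the corollary it coincides with the paper's: decompose a hermitian contraction $\mu$ as $\mu=\mu_{+}-\mu_{-}$ with $\left\Vert \mu\right\Vert =\left\Vert \mu_{+}\right\Vert +\left\Vert \mu_{-}\right\Vert$, normalize by $\left\langle e,\mu_{\pm}\right\rangle$ using Lemma \ref{lemOS0} to land in $S\left(V_{+}\right)$, conclude $\operatorname{ball}V_{h}^{\ast}\subseteq\operatorname{abc}S\left(V_{+}\right)$ since the coefficients sum to $\left\Vert \mu\right\Vert \leq 1$, and then split a general $\mu$ into $\operatorname{Re}\mu+i\operatorname{Im}\mu$ to get the factor $2^{-1}$. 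The only divergence is how the norm-additive (orthogonal) decomposition is secured: the paper has this as Theorem \ref{thOS1}, proved in the Appendix by pushing $\mu$ through the Kadison representation $\Phi:V\rightarrow C\left(S\left(V_{+}\right)\right)$ (isometric for hermitian functionals by Proposition \ref{propAp1} and Corollary \ref{corOKE31}), extending by Hahn--Banach to a charge on $S\left(V_{+}\right)$, and using the lattice Jordan decomposition in $M\left(S\left(V_{+}\right)\right)_{h}$; you instead extend $\mu$ to a hermitian functional on $\mathcal{B}\left(H\right)$ and invoke the classical Jordan (Grothendieck) decomposition there, with the correct observation that $\left\Vert \mu\right\Vert \leq\left\Vert \mu_{+}|V\right\Vert +\left\Vert \mu_{-}|V\right\Vert \leq\left\Vert \mu_{+}\right\Vert +\left\Vert \mu_{-}\right\Vert =\left\Vert \mu\right\Vert$ forces norm additivity to survive restriction. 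Both routes are sound; the paper's stays self-contained (reducing to the commutative measure-theoretic decomposition it needs anyway), while yours is shorter but leans on the standard noncommutative decomposition theorem, which is external to the text.
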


\begin{proof}
The inclusion $\operatorname{abc}S\left(  V_{+}\right)  \subseteq
\operatorname{ball}V^{\ast}$ is immediate. Take $\varphi\in\operatorname{ball}%
V_{h}^{\ast}$. By Theorem \ref{thOS1}, $\varphi$ admits an orthogonal
expansion $\varphi=\varphi_{+}-\varphi_{-}$. In the case of a nontrivial
expansion, we put $\phi_{+}=r_{+}^{-1}\varphi_{+}$, $\phi_{-}=r_{-}%
^{-1}\varphi_{-}$ with $r_{+}=\left\langle e,\varphi_{+}\right\rangle $,
$r_{-}=\left\langle e,\varphi_{-}\right\rangle $. Then $\phi_{+}$, $\phi
_{-}\in S\left(  V_{+}\right)  $ thanks to Lemma \ref{lemOS0}, and
$\varphi=r_{+}\phi_{+}-r_{-}\phi_{-}$. Since $r_{+}+r_{-}=\left\Vert
\varphi\right\Vert \leq1$, it follows that $\varphi=r_{+}\phi_{+}-r_{-}%
\phi_{-}\in\operatorname{abc}S\left(  V_{+}\right)  $. Hence
$\operatorname{ball}V_{h}^{\ast}\subseteq\operatorname{abc}S\left(
V_{+}\right)  $. Finally, every $\varphi\in\operatorname{ball}V^{\ast}$ admits
the expansion $\varphi=\operatorname{Re}\varphi+i\operatorname{Im}\varphi$
with $\operatorname{Re}\varphi,\operatorname{Im}\varphi\in\operatorname{ball}%
V_{h}^{\ast}$. Whence $\operatorname{ball}V^{\ast}\subseteq2\operatorname{abc}%
S\left(  V_{+}\right)  $.
\end{proof}

Based on Theorem \ref{thOS1}, we also derive that every element of the unit
sphere of $V_{h}^{\ast}$ is a convex combination of $S\left(  V_{+}\right)
\cup-S\left(  V_{+}\right)  $. The combination is unique in the case of a
$C^{\ast}$-algebra $\mathcal{A}$.

\section{$\varepsilon$-positivity in operator systems\label{SecDPOS}}

In this section we analyze a new concept of the $\varepsilon$-positivity in an
operator system $V$ by deforming the original positivity of $V$.

\subsection{$\varepsilon$-postivity}

Let $V$ be an operator system with its unit $e$, and the separated, closed,
unital cone $V_{+}$ of its positive elements, whose state space is denoted by
$S\left(  V_{+}\right)  $ (equipped with the $w^{\ast}$-topology). We put
$S_{\varepsilon}=V_{he}^{\ast}\cap\varepsilon\operatorname{ball}V^{\ast}$ for
$\varepsilon\geq1$. In this case, $S_{1}=V_{he}^{\ast}\cap\operatorname{ball}%
V^{\ast}=S\left(  V_{+}\right)  $. An element $v\in V_{h}$ is said to be
$\varepsilon$\textit{-positive} if $v\geq\left\Vert v\right\Vert
\dfrac{\varepsilon-1}{\varepsilon+1}e$ in $V$. The set of all $\varepsilon
$-positive elements in $V$ is denoted by $V_{\varepsilon}^{+}$. Note that
$V_{\varepsilon}^{+}\subseteq V_{+}$, $V_{1}^{+}=V_{+}$, and $V_{\varepsilon
}^{+}$ is a separated, (norm) closed cone.

\begin{lemma}
\label{lemEp1}The cone $V_{\varepsilon}^{+}$ is unital and%
\[
A_{\varepsilon}=S\left(  V_{\varepsilon}^{+}\right)  =S_{\varepsilon},
\]
where $A_{\varepsilon}=\left\{  \left(  1+s\right)  \phi-s\psi:\phi,\psi\in
S\left(  V_{+}\right)  ,0\leq s\leq2^{-1}\left(  \varepsilon-1\right)
\right\}  $.
\end{lemma}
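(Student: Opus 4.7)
The plan is to first describe $S_{\varepsilon}$ via the orthogonal Jordan-type decomposition and identify it with $A_{\varepsilon}$, then verify that $V_{\varepsilon}^{+}$ is a unital cone by exhibiting an explicit absorbing family, and finally use that family to pin down the norm of any functional in $S(V_{\varepsilon}^{+})$. Throughout I will write $t=(\varepsilon-1)/(\varepsilon+1)\in[0,1)$, so that $v\in V_{\varepsilon}^{+}$ means precisely $v-t\|v\|e\geq0$.

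\textbf{Step 1 (Identification $S_{\varepsilon}=A_{\varepsilon}$).} For $A_{\varepsilon}\subseteq S_{\varepsilon}$, if $\varphi=(1+s)\phi-s\psi$ with $\phi,\psi\in S(V_{+})$ and $0\leq s\leq(\varepsilon-1)/2$, then $\langle e,\varphi\rangle=(1+s)-s=1$ and $\|\varphi\|\leq1+2s\leq\varepsilon$. For the reverse inclusion, take $\varphi\in S_{\varepsilon}$ and invoke the orthogonal decomposition $\varphi=\varphi_{+}-\varphi_{-}$ from Theorem \ref{thOS1} with $\|\varphi\|=\|\varphi_{+}\|+\|\varphi_{-}\|$. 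Lemma \ref{lemOS0} gives $r_{\pm}:=\|\varphi_{\pm}\|=\langle e,\varphi_{\pm}\rangle$, so $r_{+}-r_{-}=1$ and $r_{+}+r_{-}\leq\varepsilon$, whence $r_{-}\leq(\varepsilon-1)/2$. Setting $\phi=\varphi_{+}/r_{+}$ (here $r_{+}\geq1$) and, when $r_{-}>0$, $\psi=\varphi_{-}/r_{-}$, we obtain the required representation with $s=r_{-}$; the case $r_{-}=0$ is trivial with $s=0$.

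\textbf{Step 2 ($V_{\varepsilon}^{+}$ is a closed unital cone).} Stability under positive scaling is immediate, while $v+w\geq t(\|v\|+\|w\|)e\geq t\|v+w\|e$ gives stability under addition; separation follows from $\pm v\in V_{\varepsilon}^{+}\Rightarrow 2t\|v\|e\leq0$. Closedness is built into $v\mapsto v-t\|v\|e$ being continuous into the closed cone $V_{+}$. The crucial estimate for unitality is that for every $u\in V_{h}$ with $\|u\|\leq1$ one has $u+\varepsilon e\in V_{\varepsilon}^{+}$: indeed $u+\varepsilon e\geq(\varepsilon-1)e\geq0$, $\|u+\varepsilon e\|\leq\varepsilon+1$, and therefore $t\|u+\varepsilon e\|\leq t(\varepsilon+1)=\varepsilon-1$, so $u+\varepsilon e-t\|u+\varepsilon e\|e\geq(\varepsilon-1-t\|u+\varepsilon e\|)e\geq0$. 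Homogenizing, $x+re\in V_{\varepsilon}^{+}$ whenever $r\geq\varepsilon\|x\|$, which (since $V_{\varepsilon}^{+}$ is a cone) is precisely $x\in r(V_{\varepsilon}^{+}-e)$, establishing the absorbing property.

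\textbf{Step 3 ($S(V_{\varepsilon}^{+})=S_{\varepsilon}$).} For $S_{\varepsilon}\subseteq S(V_{\varepsilon}^{+})$, represent $\varphi\in S_{\varepsilon}=A_{\varepsilon}$ as $(1+s)\phi-s\psi$; for $v\in V_{\varepsilon}^{+}$ the bounds $\langle v,\phi\rangle\geq t\|v\|$ (from $v\geq t\|v\|e$) and $\langle v,\psi\rangle\leq\|v\|$ (from $v\leq\|v\|e$, both supplied by Proposition \ref{propAp1}) combine to
\[
\langle v,\varphi\rangle\geq\|v\|\,\frac{(1+s)(\varepsilon-1)-s(\varepsilon+1)}{\varepsilon+1}=\|v\|\,\frac{\varepsilon-1-2s}{\varepsilon+1}\geq0.
\]
Conversely, if $\varphi\in V_{h}^{\ast}$ is unital and nonnegative on $V_{\varepsilon}^{+}$, then the absorbing element $u+\varepsilon e\in V_{\varepsilon}^{+}$ from Step 2 yields $\langle u,\varphi\rangle\geq-\varepsilon$ for every $u\in\operatorname{ball}V_{h}$; applying this to $-u$ gives $|\langle u,\varphi\rangle|\leq\varepsilon$, and (\ref{nHf}) then forces $\|\varphi\|\leq\varepsilon$, i.e., $\varphi\in S_{\varepsilon}$.

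The main obstacle I anticipate is Step 1, which relies on the nontrivial Jordan-type orthogonal decomposition of hermitian functionals on an operator system together with the norm-additivity $\|\varphi\|=\|\varphi_{+}\|+\|\varphi_{-}\|$; the rest is a careful bookkeeping exercise in which the same element $u+\varepsilon e$ must be made to do double duty — both witnessing unitality of the cone and controlling the norm of any $\varphi\in S(V_{\varepsilon}^{+})$.
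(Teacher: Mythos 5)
Your proof is correct and follows essentially the same route as the paper: the same orthogonal decomposition from Theorem \ref{thOS1} together with Lemma \ref{lemOS0} to identify $S_{\varepsilon}$ with $A_{\varepsilon}$, the same elements $v+\varepsilon\left\Vert v\right\Vert e$ to witness unitality and to bound $\left\Vert \varphi\right\Vert$ via (\ref{nHf}), and the same two-sided estimate $t\left\Vert v\right\Vert e\leq v\leq\left\Vert v\right\Vert e$ to show $A_{\varepsilon}\subseteq S\left(  V_{\varepsilon}^{+}\right)  $ (your direct algebraic inequality replaces the paper's monotonicity argument for $\left(  1+s\right)  /s$, which is only a cosmetic difference). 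The reordering of the inclusions is harmless, so no gaps.
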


\begin{proof}
Since the assertion is trivial for $\varepsilon=1$, we assume that
$\varepsilon>1$. Take $v\in V_{h}$ and put $r=\left\Vert v\right\Vert
\varepsilon$. Then $v+re\in V_{\varepsilon}^{+}$. Indeed,
\[
\left\Vert v+re\right\Vert \dfrac{\varepsilon-1}{\varepsilon+1}e\leq\left(
\left\Vert v\right\Vert +r\right)  \dfrac{\varepsilon-1}{\varepsilon
+1}e=\left(  \left\Vert v\right\Vert \varepsilon-\left\Vert v\right\Vert
\right)  e=re-\left\Vert v\right\Vert e\leq re+v,
\]
which means that $v+re\in V_{\varepsilon}^{+}$, that is, $V_{\varepsilon}^{+}$
is a unital cone.

Now prove that $S\left(  V_{+}\right)  \subseteq S\left(  V_{\varepsilon}%
^{+}\right)  \subseteq S_{\varepsilon}$. Since $V_{\varepsilon}^{+}\subseteq
V_{+}$, it is immediate that $S\left(  V_{+}\right)  \subseteq S\left(
V_{\varepsilon}^{+}\right)  $. If $\varphi\in S\left(  V_{\varepsilon}%
^{+}\right)  $ and $v\in V_{h}$ then $\pm v+\left\Vert v\right\Vert
\varepsilon e\in V_{\varepsilon}^{+}$ and $\left\langle \pm v+\left\Vert
v\right\Vert \varepsilon e,\varphi\right\rangle \geq0$, which means that
$\left\vert \left\langle v,\varphi\right\rangle \right\vert \leq
\varepsilon\left\Vert v\right\Vert $. By (\ref{nHf}), we conclude that
$\left\Vert \varphi\right\Vert =\vee\left\vert \left\langle
\operatorname{ball}V_{h},\varphi\right\rangle \right\vert \leq\varepsilon$.
Thus $S\left(  V_{\varepsilon}^{+}\right)  \subseteq V_{he}^{\ast}%
\cap\varepsilon\operatorname{ball}V^{\ast}=S_{\varepsilon}$.

Further, take $\varphi\in A_{\varepsilon}$, that is, $\varphi=\left(
1+s\right)  \phi-s\psi$ with $\phi,\psi\in S\left(  V_{+}\right)  $ and $0\leq
s\leq2^{-1}\left(  \varepsilon-1\right)  $. Prove that $\varphi\in S\left(
V_{\varepsilon}^{+}\right)  $. As we have just noticed above the statement is
true if $s=0$. Suppose $s>0$. Then $\varphi\in V_{h}^{\ast}$ and $\left\langle
e,\varphi\right\rangle =\left(  1+s\right)  \left\langle e,\phi\right\rangle
-s\left\langle e,\psi\right\rangle =1$, that is, $\varphi\in V_{he}^{\ast}$.
Take a nonzero $v\in V_{\varepsilon}^{+}$. Then $\left\Vert v\right\Vert e\geq
v\geq\left\Vert v\right\Vert \dfrac{\varepsilon-1}{\varepsilon+1}e$ and
\[
\left\langle v,\varphi\right\rangle =\left(  1+s\right)  \left\langle
v,\phi\right\rangle -s\left\langle v,\psi\right\rangle \geq\left(  1+s\right)
\dfrac{\varepsilon-1}{\varepsilon+1}\left\Vert v\right\Vert -s\left\Vert
v\right\Vert =s\left\Vert v\right\Vert \left(  \dfrac{1+s}{s}\dfrac
{\varepsilon-1}{\varepsilon+1}-1\right)  .
\]
The real-valued function $f\left(  s\right)  =\dfrac{1+s}{s}$, $0<s\leq
2^{-1}\left(  \varepsilon-1\right)  $ is decreasing with $\wedge f=\min
f=f\left(  2^{-1}\left(  \varepsilon-1\right)  \right)  =\dfrac{\varepsilon
+1}{\varepsilon-1}$, therefore $\left\langle v,\varphi\right\rangle \geq0$ or
$\varphi\in S\left(  V_{\varepsilon}^{+}\right)  $. Thus $S\left(
V_{+}\right)  \subseteq A_{\varepsilon}\subseteq S\left(  V_{\varepsilon}%
^{+}\right)  \subseteq S_{\varepsilon}$. It remains to prove that
$S_{\varepsilon}\backslash S\left(  V_{+}\right)  \subseteq A_{\varepsilon}$.

Take $\varphi\in S_{\varepsilon}\backslash S\left(  V_{+}\right)  $. Being a
hermitian functional on $V$, it admits an orthogonal decomposition by Theorem
\ref{thOS1}. Namely, $\varphi=\varphi_{+}-\varphi_{-}$ with positive
functionals $\varphi_{+}$ and $\varphi_{-}$ such that $\left\Vert
\varphi\right\Vert =\left\Vert \varphi_{+}\right\Vert +\left\Vert \varphi
_{-}\right\Vert $. Put $t=\left\Vert \varphi_{+}\right\Vert =\left\langle
e,\varphi_{+}\right\rangle $ and $s=\left\Vert \varphi_{-}\right\Vert
=\left\langle e,\varphi_{-}\right\rangle $. In this case, $t+s\leq\varepsilon$
and $1=\left\langle e,\varphi\right\rangle =t-s$. Then $t>0$. But $s>0$ too,
otherwise $\varphi=\varphi_{+}\in S\left(  V_{+}\right)  $. Thus both
$\varphi_{+}$ and $\varphi_{-}$ are nonzero functionals. Put $\phi
=t^{-1}\varphi_{+}$ and $\psi=s^{-1}\varphi_{-}$. Then $\varphi=\left(
1+s\right)  \phi-s\psi$ with $\phi,\psi\in S\left(  V_{+}\right)  $ and $0\leq
s\leq2^{-1}\left(  \varepsilon-1\right)  $. It means that $\varphi\in
A_{\varepsilon}$.
\end{proof}

The $\varepsilon$-positivity generates the $\varepsilon$-norm $\left\Vert
v\right\Vert _{\varepsilon}=\vee\left\vert \left\langle v,S\left(
V_{\varepsilon}^{+}\right)  \right\rangle \right\vert $, $v\in V$ on $V$. For
$\varepsilon=1$ we have $\left\Vert \cdot\right\Vert _{\varepsilon}=\left\Vert
\cdot\right\Vert _{e}$ (see Subsection \ref{subsecSOS11}), which is equivalent
to the original norm of $V$ by Proposition \ref{propAp1}.

\begin{corollary}
If $V$ is an operator system then its $\varepsilon$-norm is equivalent to the
original norm for every $\varepsilon\geq1$.
\end{corollary}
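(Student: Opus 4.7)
The plan is to deduce this directly from Lemma \ref{lemEp1} and Proposition \ref{propAp1}, since the heavy lifting has already been done there. By Lemma \ref{lemEp1} the state space of the $\varepsilon$-cone is exactly $S_\varepsilon = V_{he}^\ast \cap \varepsilon\operatorname{ball} V^\ast$, so by definition
\[
\|v\|_\varepsilon = \vee\bigl|\langle v, S_\varepsilon\rangle\bigr|, \quad v \in V.
\]
For $\varepsilon \geq 1$ we trivially have the sandwich $S_1 \subseteq S_\varepsilon \subseteq \varepsilon\operatorname{ball} V^\ast$, where $S_1 = S(V_+)$ by the identity noted just before Lemma \ref{lemEp1}.

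Taking suprema of $|\langle v, \cdot\rangle|$ across this chain produces the two-sided bound
\[
\|v\|_e = \vee\bigl|\langle v, S(V_+)\rangle\bigr| \leq \|v\|_\varepsilon \leq \vee\bigl|\langle v, \varepsilon\operatorname{ball} V^\ast\rangle\bigr| = \varepsilon\|v\|.
\]
The right inequality already gives one half of equivalence. For the other half, I would invoke Proposition \ref{propAp1}, which asserts that $\|\cdot\|_e$ is itself equivalent to the original norm; tracking the constants in its proof (the estimate $\|v\| \leq \|\operatorname{Re} v\| + \|\operatorname{Im} v\| \leq 2\|v\|_e$) yields $\tfrac12\|v\| \leq \|v\|_e$. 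Chaining this with the previous display gives
\[
\tfrac{1}{2}\|v\| \leq \|v\|_e \leq \|v\|_\varepsilon \leq \varepsilon\|v\|, \quad v \in V,
\]
which is the desired equivalence.

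There is no genuine obstacle here: the corollary is essentially a bookkeeping step. The only point that requires care is making sure that the identification $S(V_\varepsilon^+) = S_\varepsilon$ from Lemma \ref{lemEp1} is used, so that one can compare the $\varepsilon$-norm to the original norm via polar-type sup estimates rather than re-proving anything about the cone $V_\varepsilon^+$ itself.
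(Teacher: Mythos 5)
Your argument is correct and follows essentially the same route as the paper: both identify $S\left(V_{\varepsilon}^{+}\right)=S_{\varepsilon}$ via Lemma \ref{lemEp1}, sandwich the $\varepsilon$-norm between norms built from $S_{1}=S\left(V_{+}\right)$ and a dilate of $\operatorname{ball}V^{\ast}$, and then invoke Proposition \ref{propAp1}. The only (harmless) difference is that you use the crude containment $S_{\varepsilon}\subseteq\varepsilon\operatorname{ball}V^{\ast}$ for the upper bound, whereas the paper uses $S\left(V_{\varepsilon}^{+}\right)=A_{\varepsilon}\subseteq\varepsilon\operatorname{abc}\left(S_{1}\right)$ to compare with $\left\Vert\cdot\right\Vert_{e}$ first; both yield the desired equivalence with comparable constants.
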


\begin{proof}
If $\varphi\in S\left(  V_{\varepsilon}^{+}\right)  $ then $\varphi\in
A_{\varepsilon}$ by Lemma \ref{lemEp1}, which means that $\varphi=\left(
1+s\right)  \phi-s\psi$ for some $\phi,\psi\in S_{1}$ and $0\leq s\leq
2^{-1}\left(  \varepsilon-1\right)  $. Since $\left\vert 1+s\right\vert
+\left\vert -s\right\vert =1+2s\leq\varepsilon$, we conclude that $\varphi
\in\varepsilon\operatorname{abc}\left(  S_{1}\right)  $. Thus
\[
S_{1}=V_{he}^{\ast}\cap\operatorname{ball}V^{\ast}\subseteq V_{he}^{\ast}%
\cap\varepsilon\operatorname{ball}V^{\ast}=S\left(  V_{\varepsilon}%
^{+}\right)  =A_{\varepsilon}\subseteq\varepsilon\operatorname{abc}\left(
S_{1}\right)  ,
\]
which in turn implies the inclusions $\varepsilon^{-1}S_{1}^{\circ
}=\varepsilon^{-1}\operatorname{abc}\left(  S_{1}\right)  ^{\circ}\subseteq
S\left(  V_{\varepsilon}^{+}\right)  ^{\circ}\subseteq S_{1}^{\circ}$ for the
polars in $V$. Then $\left\Vert v\right\Vert _{\varepsilon}\leq\left\Vert
v\right\Vert _{e}\leq\varepsilon\left\Vert v\right\Vert _{\varepsilon}$ for
all $v\in V$, or $\left\Vert \cdot\right\Vert _{\varepsilon}\sim\left\Vert
\cdot\right\Vert _{e}$. It remains to use Proposition \ref{propAp1}.
\end{proof}

\subsection{The continuous functions with $\varepsilon$-oscillations}

Now assume for a while that $V=C\left(  \mathcal{T}\right)  $ is an abelian,
unital $C^{\ast}$-algebra, where $\mathcal{T}$ is a compact, Hausdorff
topological space. Recall that\textit{ the oscillation of a function}
$h:\mathcal{T}\rightarrow\mathbb{C}$ over a subset $E\subseteq\mathcal{T}$ is
given by the quantity $\omega\left(  h,E\right)  =\vee\left\{  \left\vert
h\left(  t\right)  -h\left(  l\right)  \right\vert :t,l\in E\right\}  $ by
measuring the highest amplitude of $h$ over $E$. Take $f\in C\left(
\mathcal{T}\right)  _{h}$. By its very definition, $f\in C\left(
\mathcal{T}\right)  _{\varepsilon}^{+}$ iff $f\left(  t\right)  \geq\left\Vert
f\right\Vert _{\infty}\dfrac{\varepsilon-1}{\varepsilon+1}$ for all
$t\in\mathcal{T}$. A function $f\in C\left(  \mathcal{T}\right)  _{+}$ is said
to has the $\varepsilon$\textit{-oscillation }if either $f=0$ or
$\omega\left(  \ln f,\mathcal{T}\right)  \leq c_{\varepsilon}$, where
$c_{\varepsilon}=\ln\dfrac{\varepsilon+1}{\varepsilon-1}$. Every function from
$C\left(  \mathcal{T}\right)  _{+}$ has the $\varepsilon$-oscillation for
$\varepsilon=1$ (we put $c_{1}=+\infty$). More concrete example can be given
by the function $f\left(  t\right)  =3^{t}$ on $\mathcal{T=}\left[
0,1\right]  $ for $\varepsilon=2$, whereas $3^{3^{t}}$ has no $2$-oscillation.

Consider the unital $\ast$-representation $\Phi:V\rightarrow C\left(  S\left(
V_{+}\right)  \right)  $, $\Phi\left(  v\right)  \left(  \varphi\right)
=\left\langle v,\varphi\right\rangle $, $v\in V$, $\varphi\in S\left(
V_{+}\right)  $ with $\Phi\left(  V_{+}\right)  =C\left(  S\left(
V_{+}\right)  \right)  _{+}\cap\Phi\left(  V_{h}\right)  $ (see Corollary
\ref{corOKE31}).

\begin{lemma}
\label{lemEp2}The equality $\Phi\left(  V_{\varepsilon}^{+}\right)  =C\left(
\mathcal{T}\right)  _{\varepsilon}^{+}\cap\Phi\left(  V_{h}\right)  $ holds.
Moreover, for every nonzero $v\in V_{h}$ the following assertions are equivalent:

$\left(  i\right)  $ $v\in V_{\varepsilon}^{+}$;

$\left(  ii\right)  $ $\Phi\left(  v\right)  $ has the $\varepsilon$-oscillation;

$\left(  iii\right)  $ $\left\langle v,S\left(  V_{\varepsilon}^{+}\right)
\right\rangle \geq0$.

Thus the bipolar theorem (see \cite[Remark 5.3]{DSbM}) holds for all unital
cones $V_{\varepsilon}^{+}$, $\varepsilon\geq1$.
\end{lemma}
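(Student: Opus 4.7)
Since $V=C(\mathcal{T})$ is a function system, Corollary \ref{corOKE44} gives $\|\cdot\|=\|\cdot\|_{e}$, so by Corollary \ref{corOKE31} the Kadison representation $\Phi:V\to C(S(V_{+}))$ is an isometric, unital, $\ast$-linear order isomorphism onto its range. The plan is to push the definition of $\varepsilon$-positivity through $\Phi$ to get the first identity, then read off the three equivalences using this identification together with Lemma \ref{lemEp1}.

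First I would establish $\Phi(V_{\varepsilon}^{+})=C(\mathcal{T})_{\varepsilon}^{+}\cap\Phi(V_{h})$ by a direct translation: for $v\in V_{h}$, the condition $v\geq\|v\|\tfrac{\varepsilon-1}{\varepsilon+1}e$ is preserved by $\Phi$ (since $\Phi$ is unital, order preserving and $\|v\|=\|\Phi(v)\|_{\infty}$), and this is exactly the defining inequality of $C(\mathcal{T})_{\varepsilon}^{+}$ for $\Phi(v)$. For (i)$\Leftrightarrow$(ii), assume $v\neq 0$ and set $M=\|v\|_{\infty}$, $m=\min_{\mathcal{T}}v$. Condition (i) translates via $\Phi$ to $m\geq M\tfrac{\varepsilon-1}{\varepsilon+1}$, which (for $\varepsilon>1$) forces $v>0$ and is equivalent to $M/m\leq\tfrac{\varepsilon+1}{\varepsilon-1}$, i.e.\ $\omega(\ln v,\mathcal{T})=\ln M-\ln m\leq c_{\varepsilon}$; the case $\varepsilon=1$ is trivial since $c_{1}=+\infty$ and $(\varepsilon-1)/(\varepsilon+1)=0$.

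The implication (i)$\Rightarrow$(iii) is immediate from the definition of $S(V_{\varepsilon}^{+})$. For (iii)$\Rightarrow$(i), which I expect to be the main obstacle, I would invoke Lemma \ref{lemEp1}, which gives $S(V_{\varepsilon}^{+})=A_{\varepsilon}=\{(1+s)\phi-s\psi:\phi,\psi\in S(V_{+}),\ 0\leq s\leq\tfrac{\varepsilon-1}{2}\}$. Setting $s=0$ in $\langle v,(1+s)\phi-s\psi\rangle\geq 0$ yields $\langle v,S(V_{+})\rangle\geq 0$, hence $v\in V_{+}$ by \eqref{sp}. Taking then the extremal choice $s=(\varepsilon-1)/2$ and rearranging gives
\[
\langle v,\phi\rangle\geq\tfrac{\varepsilon-1}{\varepsilon+1}\langle v,\psi\rangle\quad\text{for all }\phi,\psi\in S(V_{+}).
\]
Taking the infimum over $\phi$ and the supremum over $\psi$, and using that in $C(\mathcal{T})$ the states are exactly the probability Radon measures so that $\inf_{\phi}\langle v,\phi\rangle=\min_{t\in\mathcal{T}}v(t)$ (attained by the point evaluations $\delta_{t}$) and $\sup_{\psi}\langle v,\psi\rangle=\|v\|_{\infty}$, yields $v(t)\geq\tfrac{\varepsilon-1}{\varepsilon+1}\|v\|_{\infty}$ for every $t\in\mathcal{T}$, that is, $v\in V_{\varepsilon}^{+}$.

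Finally, the bipolar theorem assertion is a direct reformulation of (i)$\Leftrightarrow$(iii): it says precisely that $V_{\varepsilon}^{+}=\{v\in V_{h}:\langle v,S(V_{\varepsilon}^{+})\rangle\geq 0\}$, i.e.\ the cone coincides with the bipolar of its state space. For a general operator system one transports the conclusion via the faithful representation $\Phi:V\to C(S(V_{+}))$, which is compatible with $\varepsilon$-positivity by the first identity of the lemma, so the abelian case forces the general bipolar identity.
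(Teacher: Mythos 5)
Your proposal is correct and follows essentially the same route as the paper: both arguments rest on Lemma \ref{lemEp1} (the identification $S\left(V_{\varepsilon}^{+}\right)=A_{\varepsilon}$), the Kadison representation machinery (Corollary \ref{corOKE31}, Proposition \ref{propAp1}) and the state positivity (\ref{sp}), testing the inequality at $s=0$ and at the extremal value $s=2^{-1}\left(\varepsilon-1\right)$, with your min/max reformulation of $\left(i\right)\Leftrightarrow\left(ii\right)$ being just a compressed version of the paper's ratio estimate. One remark: since $\Phi$ is already isometric and order-faithful on $V_{h}$ for an arbitrary operator system (Proposition \ref{propAp1}), your argument works verbatim with $S\left(V_{+}\right)$ in place of $\mathcal{T}$ and with (\ref{sp}) replacing the appeal to point evaluations, so the closing ``transport to the general case'' sentence — the only hand-wavy step in your write-up — is unnecessary.
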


\begin{proof}
Take $v\in V_{h}$. Then $v\in V_{\varepsilon}^{+}$ iff $\left\langle
v-\left\Vert v\right\Vert \dfrac{\varepsilon-1}{\varepsilon+1}e,S\left(
V_{+}\right)  \right\rangle \geq0$ by (\ref{sp}). It means that $\Phi\left(
v\right)  \geq\left\Vert v\right\Vert \dfrac{\varepsilon-1}{\varepsilon
+1}=\left\Vert \Phi\left(  v\right)  \right\Vert _{\infty}\dfrac
{\varepsilon-1}{\varepsilon+1}$ thanks to Corollary \ref{corOKE31}. Hence
$\Phi\left(  V_{\varepsilon}^{+}\right)  =C\left(  \mathcal{T}\right)
_{\varepsilon}^{+}\cap\Phi\left(  V_{h}\right)  $.

Now take a nonzero $v\in V_{h}$. If $v\in V_{\varepsilon}^{+}$ then
$\dfrac{\varepsilon+1}{\varepsilon-1}v\geq\left\Vert v\right\Vert e\geq v$
and
\[
\left\langle v,\varphi\right\rangle \dfrac{\varepsilon+1}{\varepsilon-1}%
\geq\left\Vert v\right\Vert \geq\left\langle v,\psi\right\rangle
\]
for all $\varphi,\psi\in S\left(  V_{+}\right)  $. In particular,
$\left\langle v,\phi\right\rangle >0$ for all $\phi\in S\left(  V_{+}\right)
$, and $\dfrac{\left\langle v,\psi\right\rangle }{\left\langle v,\varphi
\right\rangle }\leq\dfrac{\varepsilon+1}{\varepsilon-1}$ for all $\varphi
,\psi\in S\left(  V_{+}\right)  $. Thus $\dfrac{\Phi\left(  v\right)  \left(
\psi\right)  }{\Phi\left(  v\right)  \left(  \varphi\right)  }\leq
\dfrac{\varepsilon+1}{\varepsilon-1}$ for all $\varphi,\psi\in S\left(
V_{+}\right)  $, which means that%
\[
\omega\left(  \ln\Phi\left(  v\right)  ,S\left(  V_{+}\right)  \right)
=\vee\left\{  \left\vert \ln\Phi\left(  v\right)  \left(  \psi\right)
-\ln\Phi\left(  v\right)  \left(  \varphi\right)  \right\vert :\psi,\varphi\in
S\left(  V_{+}\right)  \right\}  \leq c_{\varepsilon}%
\]
or $\Phi\left(  v\right)  $ has the $\varepsilon$-oscillation. Thus $\left(
i\right)  \Rightarrow\left(  ii\right)  $ holds.

Now suppose that $\Phi\left(  v\right)  $ possesses the $\varepsilon
$-oscillation. Then $\left\langle v,\phi\right\rangle >0$ for all $\phi\in
S\left(  V_{+}\right)  $, and $\dfrac{\left\langle v,\psi\right\rangle
}{\left\langle v,\varphi\right\rangle }\leq\dfrac{\varepsilon+1}%
{\varepsilon-1}$. As in the proof of Lemma \ref{lemEp1}, we have
$\dfrac{\left\langle v,\psi\right\rangle }{\left\langle v,\varphi\right\rangle
}\leq\dfrac{s+1}{s}$ for all $0<s\leq2^{-1}\left(  \varepsilon-1\right)  $. It
follows that $\left\langle v,\left(  1+s\right)  \phi-s\psi\right\rangle
\geq0$ for all $0\leq s\leq2^{-1}\left(  \varepsilon-1\right)  $ (if $s=0$
then $\left\langle v,\phi\right\rangle >0$ for all $\phi\in S\left(
V_{+}\right)  $). Thus $\left\langle v,A_{\varepsilon}\right\rangle \geq0$ and
$\left(  ii\right)  \Rightarrow\left(  iii\right)  $ follows from Lemma
\ref{lemEp1}.

Finally, prove that $\left(  iii\right)  \Rightarrow\left(  i\right)  $. If
$\left\langle v,A_{\varepsilon}\right\rangle \geq0$ then $\left(  1+s\right)
\left\langle v,\phi\right\rangle \geq s\left\langle v,\psi\right\rangle $ for
all $\phi,\psi\in S\left(  V_{+}\right)  $ and $0\leq s\leq2^{-1}\left(
\varepsilon-1\right)  $. But $\left\Vert v\right\Vert =\left\Vert v\right\Vert
_{e}>0$, therefore $\left\langle v,\psi\right\rangle >0$ for some $\psi\in
S\left(  V_{+}\right)  $ (see Corollary \ref{corOKE44}). It follows that
$\left\langle v,\phi\right\rangle >0$ for all $\phi\in S\left(  V_{+}\right)
$. As above $\dfrac{\left\langle v,\psi\right\rangle }{\left\langle
v,\varphi\right\rangle }\leq\dfrac{\varepsilon+1}{\varepsilon-1}$ holds for
all $\phi,\psi\in S\left(  V_{+}\right)  $. Then%
\[
\dfrac{\varepsilon-1}{\varepsilon+1}\left\Vert v\right\Vert =\dfrac
{\varepsilon-1}{\varepsilon+1}\vee\left\langle v,S\left(  V_{+}\right)
\right\rangle =\vee\left\langle v,\dfrac{\varepsilon-1}{\varepsilon+1}S\left(
V_{+}\right)  \right\rangle \leq\left\langle v,\varphi\right\rangle
\]
for all $\varphi\in S\left(  V_{+}\right)  $, that is, $\left\langle
v-\left\Vert v\right\Vert \dfrac{\varepsilon-1}{\varepsilon+1}e,S\left(
V_{+}\right)  \right\rangle \geq0$. By (\ref{sp}), we deduce $v\in
V_{\varepsilon}^{+}$.
\end{proof}

In the case of $V=C\left(  \mathcal{T}\right)  $ we have $\mathcal{T=\partial
}P\left(  \mathcal{T}\right)  \subseteq P\left(  \mathcal{T}\right)  =S\left(
C\left(  \mathcal{T}\right)  _{+}\right)  $ (see Subsection \ref{subsecEP})
and the related $\ast$-representation $\Phi:C\left(  \mathcal{T}\right)
\rightarrow C\left(  P\left(  \mathcal{T}\right)  \right)  $ is reduced to the
natural extension $\Phi\left(  f\right)  $ of a function $f\in C\left(
\mathcal{T}\right)  $ to $P\left(  \mathcal{T}\right)  $ by $\Phi\left(
f\right)  \left(  \mu\right)  =\left\langle f,\mu\right\rangle $, $\mu\in
P\left(  \mathcal{T}\right)  $. If $\mu=\delta_{t}$ for some $t\in\mathcal{T}%
$, then $\Phi\left(  f\right)  \left(  \delta_{t}\right)  =f\left(  t\right)
$.

\begin{corollary}
The unital cone $C\left(  \mathcal{T}\right)  _{\varepsilon}^{+}$ consists of
all functions with the $\varepsilon$-oscillations precisely.
\end{corollary}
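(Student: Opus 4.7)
My plan is to derive the corollary as the specialization of Lemma \ref{lemEp2} to the abelian $C^{\ast}$-algebra $V=C\left(  \mathcal{T}\right)  $, combined with the observation that integration of a continuous function against probability measures does not change its oscillation. Lemma \ref{lemEp2} gives, for nonzero $v\in V_{h}$, the equivalence $v\in V_{\varepsilon}^{+}$ iff $\Phi\left(  v\right)  $ has $\varepsilon$-oscillation on $S\left(  V_{+}\right)  $. Taking $V=C\left(  \mathcal{T}\right)  $ we have $S\left(  V_{+}\right)  =P\left(  \mathcal{T}\right)  $, and, as the paragraph before the corollary records, $\Phi\left(  f\right)  \left(  \mu\right)  =\left\langle f,\mu\right\rangle $ with $\Phi\left(  f\right)  \left(  \delta_{t}\right)  =f\left(  t\right)  $ on the Dirac subset $\mathcal{T}\subseteq P\left(  \mathcal{T}\right)  $. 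The whole task therefore reduces to matching the $\varepsilon$-oscillation of $\Phi\left(  f\right)  $ on $P\left(  \mathcal{T}\right)  $ with the $\varepsilon$-oscillation of $f$ on $\mathcal{T}$.

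The key computation in this matching step is that for $f\in C\left(  \mathcal{T}\right)  _{+}$ and every $\mu\in P\left(  \mathcal{T}\right)  $ one has $\wedge f\left(  \mathcal{T}\right)  \leq\left\langle f,\mu\right\rangle \leq\vee f\left(  \mathcal{T}\right)  $, with both extremes attained by $\mu=\delta_{t}$ at points where $f$ achieves its infimum or supremum. Hence $\Phi\left(  f\right)  \left(  P\left(  \mathcal{T}\right)  \right)  $ and $f\left(  \mathcal{T}\right)  $ share the same infimum and supremum, so $\omega\left(  \ln\Phi\left(  f\right)  ,P\left(  \mathcal{T}\right)  \right)  =\omega\left(  \ln f,\mathcal{T}\right)  $, with the convention that both equal $+\infty$ when $\wedge f=0$. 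The trivial case $f=0$ is noted separately: it lies in $V_{\varepsilon}^{+}$ and has $\varepsilon$-oscillation by convention in either setting. Chaining the two equivalences then delivers the corollary.

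I do not foresee a genuine obstacle: the structural content sits in Lemma \ref{lemEp2}, and the identification of the two oscillations is elementary. The only mildly subtle point is the boundary behaviour when $f$ vanishes somewhere on $\mathcal{T}$; but there $\ln f=-\infty$ at such a point and $\ln\left\langle f,\delta_{t}\right\rangle =-\infty$ at the corresponding Dirac measure, so neither notion records a finite oscillation and the equivalence is vacuous. As an alternative, one could bypass Lemma \ref{lemEp2} entirely by unfolding $f\in V_{\varepsilon}^{+}$ into the pointwise inequality $f\left(  t\right)  \geq\left\Vert f\right\Vert _{\infty}\dfrac{\varepsilon-1}{\varepsilon+1}$ for all $t\in\mathcal{T}$, dividing by $\left\Vert f\right\Vert _{\infty}=\vee f$ in the nonzero case, and reading off $\vee f/\wedge f\leq\dfrac{\varepsilon+1}{\varepsilon-1}$, which is exactly $\omega\left(  \ln f,\mathcal{T}\right)  \leq c_{\varepsilon}$; I nevertheless prefer the first route, since it exhibits the corollary as a direct incarnation of the structural result just proved.
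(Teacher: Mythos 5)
Your proposal is correct, and it reaches the conclusion by a route that differs from the paper's in the harder (converse) direction. Like the paper, you specialize Lemma \ref{lemEp2} to $V=C\left(\mathcal{T}\right)$ with $S\left(V_{+}\right)=P\left(\mathcal{T}\right)$, and your forward direction (restrict $\Phi\left(f\right)$ to the Dirac measures) is exactly the paper's. For the converse, however, the paper does not compare oscillations at all: it converts the pointwise inequality $\left(1+s\right)\left\langle f,\delta_{t}\right\rangle \geq s\left\langle f,\delta_{l}\right\rangle$ into the same inequality for all $\mu,\nu\in P\left(\mathcal{T}\right)$ using Proposition \ref{propBPM} (the Krein--Milman description $P\left(\mathcal{T}\right)=\operatorname{co}\left(\left\{\delta_{t}\right\}\right)^{-w^{\ast}}$), identifies the resulting positivity with $\left\langle f,S\left(C\left(\mathcal{T}\right)_{\varepsilon}^{+}\right)\right\rangle \geq0$ via Lemma \ref{lemEp1}, and then invokes the implication $\left(iii\right)\Rightarrow\left(i\right)$ of Lemma \ref{lemEp2} (the bipolar mechanism). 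You instead stay with the equivalence $\left(i\right)\Leftrightarrow\left(ii\right)$ and prove the elementary matching fact that $\wedge f\left(\mathcal{T}\right)\leq\left\langle f,\mu\right\rangle \leq\vee f\left(\mathcal{T}\right)$ for all $\mu\in P\left(\mathcal{T}\right)$, with both bounds attained at Dirac measures by compactness, so $\Phi\left(f\right)$ and $f$ have the same supremum and infimum and hence the same logarithmic oscillation; this avoids Proposition \ref{propBPM} and Lemma \ref{lemEp1} entirely and is arguably more transparent, while the paper's argument advertises the state-space/bipolar machinery it has just set up. Your handling of the edge cases ($f=0$, $f$ vanishing at a point, where neither notion yields a finite oscillation and membership in $C\left(\mathcal{T}\right)_{\varepsilon}^{+}$ fails for $\varepsilon>1$) is consistent, and your remarked shortcut --- unfolding $f\left(t\right)\geq\left\Vert f\right\Vert _{\infty}\dfrac{\varepsilon-1}{\varepsilon+1}$ into $\vee f/\wedge f\leq\dfrac{\varepsilon+1}{\varepsilon-1}$ --- is in fact the most economical proof of the corollary, bypassing Lemma \ref{lemEp2} altogether.
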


\begin{proof}
Take a nonzero $f\in C\left(  \mathcal{T}\right)  _{h}$. First assume that
$f\in C\left(  \mathcal{T}\right)  _{\varepsilon}^{+}$. By Lemma \ref{lemEp2},
$\Phi\left(  f\right)  $ has the $\varepsilon$-oscillation on $P\left(
\mathcal{T}\right)  $. In particular, it has the $\varepsilon$-oscillation on
the subset $\mathcal{T}$ either. It remains to notify that $f=\Phi\left(
f\right)  |\mathcal{T}$.

Conversely, suppose that $f$ has the $\varepsilon$-oscillation. Then
$\dfrac{f\left(  t\right)  }{f\left(  l\right)  }\leq\dfrac{\varepsilon
+1}{\varepsilon-1}$, $t,l\in\mathcal{T}$, which means that $\left(
1+s\right)  \left\langle f,\delta_{t}\right\rangle \geq s\left\langle
f,\delta_{l}\right\rangle $ for all $t,l\in\mathcal{T}$ and $0\leq s\leq
2^{-1}\left(  \varepsilon-1\right)  $. By Proposition \ref{propBPM}, we have
$\operatorname{co}\left(  \mathcal{T}\right)  ^{-w^{\ast}}=\operatorname{co}%
\left(  \mathcal{\partial}P\left(  \mathcal{T}\right)  \right)  ^{-w^{\ast}%
}=P\left(  \mathcal{T}\right)  $, therefore $\left(  1+s\right)  \left\langle
f,\mu\right\rangle \geq s\left\langle f,\nu\right\rangle $ for all $\mu,\nu
\in\mathcal{P}\left(  \mathcal{T}\right)  $ and $0\leq s\leq2^{-1}\left(
\varepsilon-1\right)  $. It follows that $\left\langle f,S\left(  C\left(
\mathcal{T}\right)  _{\varepsilon}^{+}\right)  \right\rangle \geq0$ by Lemma
\ref{lemEp1}. Using Lemma \ref{lemEp2}, we obtain that $f\in C\left(
\mathcal{T}\right)  _{\varepsilon}^{+}$ (the Bipolar Theorem).
\end{proof}

\section{\textbf{Abstract function systems}\label{secPr}}

In this section we prove the main result on the projective function systems.

\subsection{The unital $\ast$-normed spaces\label{subsecUNS1}}

Let $V$ be a $\ast$-vector space, that is, $V$ is a complex vector space with
an involution. A norm $\left\Vert \cdot\right\Vert $ on $V$ is said to be a
$\ast$\textit{-norm} if the unit $\operatorname{ball}V$ is hermitian, that is,
$\left(  \operatorname{ball}V\right)  ^{\ast}=\operatorname{ball}V$. The
latter is equivalent to the property $\left\Vert v^{\ast}\right\Vert
=\left\Vert v\right\Vert $ for all $v\in V$. The real vector space of all
hermitian elements in $V$ is denoted by $V_{h}$. Notice that the norm dual
space $V^{\ast}$ also possesses the involution $\varphi\mapsto\varphi^{\ast}$
given by $\left\langle v,\varphi^{\ast}\right\rangle =\left\langle v^{\ast
},\varphi\right\rangle ^{\ast}$. If $V$ is a $\ast$-normed space then
$\left\Vert \varphi^{\ast}\right\Vert =\left\Vert \varphi\right\Vert $ for all
$\varphi\in V^{\ast}$, and the involution on $V$ is a $\sigma\left(
V,V^{\ast}\right)  $-continuous mapping. Thus $\left(  V,V^{\ast}\right)  $ is
a dual $\ast$-pair (see \cite{Dhjm}, \cite{DCcones}). The real vector space of
all bounded hermitian functionals on $V$ is denoted by $V_{h}^{\ast}$.

Now fix a nonzero $e\in V_{h}$ of a $\ast$-normed vector space $V$. Put
$V_{he}^{\ast}=\left\{  y\in V_{h}^{\ast}:\left\langle e,y\right\rangle
=1\right\}  $ to be the real hyperplane of all unital hermitian functionals
from $V^{\ast}$. As above put $S=V_{he}^{\ast}\cap\operatorname{ball}V^{\ast}$
(or $V_{he}^{\ast}\cap\left(  \operatorname{ball}V\right)  ^{\circ}$) to be a
convex set. We say that $e$ is \textit{a unit element }(or just \textit{unit}%
)\textit{ for }$V$\textit{ }if $S\neq\varnothing$. Thus $1=\left\langle
e,y\right\rangle \leq\left\Vert e\right\Vert \left\Vert y\right\Vert
\leq\left\Vert e\right\Vert $ for some $y\in S$, that is, $\left\Vert
e\right\Vert \geq1$. Every hermitian unit vector $e$ of a $\ast$-normed space
$V$ is a unit automatically. Indeed, since $\left\Vert e\right\Vert =1$, it
follows that $\left\langle e,y\right\rangle =1$ for some $y\in
\operatorname{ball}V^{\ast}$ by Hahn-Banach extension property. But
$\operatorname{Re}y\in\operatorname{ball}V^{\ast}$ and $\left\langle
e,\operatorname{Re}y\right\rangle =2^{-1}\left(  \left\langle e,y\right\rangle
+\left\langle e,y\right\rangle ^{\ast}\right)  =1$, that is,
$\operatorname{Re}y\in V_{he}^{\ast}\cap\operatorname{ball}V^{\ast}=S$. Hence
$e$ is a unit for $V$ and $S\subseteq\partial\operatorname{ball}V^{\ast}$. A
$\ast$-normed vector space $V$ with a fixed unit $\left(  V,e\right)  $ is
called \textit{a unital }$\ast$\textit{-normed space.}

\subsection{The unital ball of a unital $\ast$-normed space}

The unit $\operatorname{ball}V$ of a unital $\ast$-normed space $\left(
V,e\right)  $ is said to be \textit{a unital ball} if $\operatorname{ball}V$
is the absolute polar of its state space $S$, that is, $\operatorname{ball}%
V=S^{\circ}$. A unital $\ast$-normed space $\left(  V,e\right)  $ with its
unital ball is called\textit{ an abstract function system }or \textit{a
commutative operator system with its state space }$S$\textit{. }For brevity we
say that $V$ is \textit{a function system}. A unital self-adjoint subspace $V$
of $C\left(  \mathcal{T}\right)  $ is an example of a (concrete) function
system (see Corollary \ref{corOKE44}). In the case of $V=C\left(
\mathcal{T}\right)  $ with $e=1$ we obtain that $S=P\left(  \mathcal{T}%
\right)  $ (see Proposition \ref{propBPM}).

\begin{lemma}
\label{lemUNS0}Let $\left(  V,e\right)  $ be a function system, $\mathfrak{c=}%
\left\{  v\in V_{h}:\left\langle v,S\right\rangle \geq0\right\}  $ and let
$\left\Vert v\right\Vert _{e}=\vee\left\vert \left\langle v,S\right\rangle
\right\vert $, $v\in V$. Then $\mathfrak{c}$ is a separated, closed, unital
cone in $V$ such that $S\left(  \mathfrak{c}\right)  =S$ and $\left\Vert
\cdot\right\Vert =\left\Vert \cdot\right\Vert _{e}$. In particular, $e$ is a
unit vector.
\end{lemma}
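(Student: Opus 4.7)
The plan is to verify the four claims in sequence, using only the polar duality $\operatorname{ball}V = S^{\circ}$ guaranteed by the function system hypothesis together with elementary cone arguments; at each step the trick is to convert statements about $S$ into statements about $\mathfrak{c}$ via the sandwich elements $\|x\|_{e}e\pm x$.

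First I would establish the norm identity $\|\cdot\|=\|\cdot\|_{e}$. By hypothesis $\operatorname{ball}V=S^{\circ}$, and by the very definition of the absolute polar, $S^{\circ}$ coincides with the unit ball of the seminorm $v\mapsto\vee|\langle v,S\rangle|=\|v\|_{e}$. Hence $\|\cdot\|=\|\cdot\|_{e}$. Specializing to $e$ itself, since $\langle e,\varphi\rangle=1$ for every $\varphi\in S$, one gets $\|e\|=\|e\|_{e}=1$, proving the final sentence that $e$ is a unit vector. The same identity also yields the key inequality $|\langle x,\varphi\rangle|\leq\|x\|$ for all $x\in V_{h}$ and $\varphi\in S$, which is used repeatedly below.

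Second I would verify the cone properties of $\mathfrak{c}$. Closedness is automatic, as $\mathfrak{c}$ is the intersection of the closed half-spaces $\{v\in V_{h}:\langle v,\varphi\rangle\geq0\}$ for $\varphi\in S$, and it trivially contains $e$ since $\langle e,\varphi\rangle=1\geq0$. For separatedness, if $v\in\mathfrak{c}\cap-\mathfrak{c}$ then $\langle v,S\rangle=0$, hence $\|v\|_{e}=0$ and Step~1 forces $v=0$. For unitality I would show $\mathfrak{c}-e$ absorbs $V_{h}$: given a nonzero $x\in V_{h}$, the two elements $\|x\|_{e}e\pm x$ lie in $\mathfrak{c}$, because pointwise on $S$, $\langle\|x\|_{e}e\pm x,\varphi\rangle=\|x\|_{e}\pm\langle x,\varphi\rangle\geq 0$ by the inequality from Step~1. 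Hence $x\in\|x\|(\mathfrak{c}-e)$.

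Third I would prove $S(\mathfrak{c})=S$. The inclusion $S\subseteq S(\mathfrak{c})$ is immediate from the definition of $\mathfrak{c}$. For the converse, take $\varphi\in S(\mathfrak{c})$, so $\varphi=\varphi^{\ast}$, $\langle e,\varphi\rangle=1$, and $\langle\mathfrak{c},\varphi\rangle\geq 0$. Applying $\varphi$ to the elements $\|x\|_{e}e\pm x\in\mathfrak{c}$ obtained in Step~2 gives $|\langle x,\varphi\rangle|\leq\|x\|_{e}\langle e,\varphi\rangle=\|x\|$ for every $x\in V_{h}$. Then formula (\ref{nHf}), which computes the norm of a hermitian functional using only $\operatorname{ball}V_{h}$, yields $\|\varphi\|\leq 1$, hence $\varphi\in V_{he}^{\ast}\cap\operatorname{ball}V^{\ast}=S$.

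The only subtle step is the third one: positivity of $\varphi$ with respect to the externally defined cone $\mathfrak{c}$ must by itself force $\|\varphi\|\leq 1$. This works precisely because $\mathfrak{c}$ is large enough to contain the symmetric sandwich pair $\|x\|_{e}e\pm x$ for every $x\in V_{h}$, so that testing $\varphi$ against $\mathfrak{c}$ recovers the full hermitian-functional norm via (\ref{nHf}). Everything else is a routine unwinding of the polar identity $\operatorname{ball}V=S^{\circ}$.
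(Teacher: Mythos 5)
Your proposal is correct and follows essentially the same route as the paper: the polar identity $\operatorname{ball}V=S^{\circ}$ gives $\left\Vert \cdot\right\Vert =\left\Vert \cdot\right\Vert _{e}$ (the paper phrases this via the bipolar theorem in $V^{\ast}$, you read it off directly in $V$), the sandwich elements $\left\Vert x\right\Vert _{e}e\pm x$ give separatedness and unitality, and testing a state of $\mathfrak{c}$ against them plus the hermitian-rotation estimate recovers $S\left(  \mathfrak{c}\right)  =S$. The one point the paper spells out and you assume is that an element of $S\left(  \mathfrak{c}\right)  $ is a priori only a unital linear functional that is nonnegative on $\mathfrak{c}$: its hermitianness has to be derived from $V_{h}=\mathfrak{c}-\mathfrak{c}$ (immediate from your Step~2), after which your appeal to (\ref{nHf}) — whose proof uses only the $\ast$-norm property, hence is valid here and yields boundedness of the functional — completes the argument exactly as you describe.
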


\begin{proof}
Using the Absolute Bipolar Theorem, we derive that $\operatorname{ball}%
V^{\ast}=\left(  \operatorname{ball}V\right)  ^{\circ}=S^{\circ\circ
}=\operatorname{abc}\left(  S\right)  ^{-w^{\ast}}$, where $\operatorname{abc}%
\left(  S\right)  ^{-w^{\ast}}$ is the $w^{\ast}$-closure of the absolutely
convex hull $\operatorname{abc}\left(  S\right)  $ of $S$. Therefore%
\[
\left\Vert v\right\Vert =\vee\left\vert \left\langle v,\operatorname{ball}%
V^{\ast}\right\rangle \right\vert =\vee\left\vert \left\langle
v,\operatorname{abc}\left(  S\right)  ^{-w^{\ast}}\right\rangle \right\vert
=\vee\left\vert \left\langle v,S\right\rangle \right\vert =\left\Vert
v\right\Vert _{e}%
\]
for all $v\in V$. In particular, $\left\Vert e\right\Vert =\left\Vert
e\right\Vert _{e}=\sup\left\vert \left\langle e,S\right\rangle \right\vert
=1$, which means that $e$ is a unit vector of $V$. Further, take $v\in V_{h}$.
If $\pm v\in\mathfrak{c}$ then $\left\langle v,S\right\rangle =\left\{
0\right\}  $ or $\left\Vert v\right\Vert _{e}=0$, that is, $v=0$. Further,
$\left\langle \left\Vert v\right\Vert e\pm v,S\right\rangle =\left\Vert
v\right\Vert \pm\left\langle v,S\right\rangle =\left\Vert v\right\Vert _{e}%
\pm\left\langle v,S\right\rangle \geq0$, which means that $\left\Vert
v\right\Vert e\pm v\in\mathfrak{c}$. Thus $\mathfrak{c}$ is a separated,
closed, unital cone in $V$. Actually, $\mathfrak{c}$ is weakly closed.

Further, take $y\in S\left(  \mathfrak{c}\right)  $, which means that
$y:V\rightarrow\mathbb{C}$ is a linear functional such that $\left\langle
\mathfrak{c},y\right\rangle \geq0$ and $\left\langle e,y\right\rangle =1$.
Since $\left\Vert v\right\Vert e\pm v\in\mathfrak{c}$ for $v\in V_{h}$, we
obtain that $\left\vert \left\langle v,y\right\rangle \right\vert
\leq\left\Vert v\right\Vert $. Taking into account that $V=V_{h}+iV_{h}$ and
$V_{h}=\mathfrak{c-c}$, we conclude that $y\in V_{he}^{\ast}$ (see Subsection
\ref{subsecOS22}). For $v\in V$ with nonzero $\left\langle v,y\right\rangle $
we have $\left\vert \left\langle v,y\right\rangle \right\vert =\left\langle
\theta v,y\right\rangle =\operatorname{Re}\left\langle \theta v,y\right\rangle
=\left\langle \operatorname{Re}\left(  \theta v\right)  ,y\right\rangle
\leq\left\Vert \operatorname{Re}\left(  \theta v\right)  \right\Vert
\leq\left\Vert \theta v\right\Vert =\left\Vert v\right\Vert $ for some
$\theta\in\mathbb{C}$, $\left\vert \theta\right\vert =1$. Hence $y\in
V_{he}^{\ast}\cap\operatorname{ball}V^{\ast}=S$, that is, $S\left(
\mathfrak{c}\right)  =S$.
\end{proof}

Thus every function system turns out to be an Archimedian ordered $\ast
$-vector space in the sense of \cite{PT}. Using the $\ast$-representation from
Corollary \ref{corOKE31}, we can identify $V$ with a unital self-adjoint
subspace of $C\left(  S\left(  \mathfrak{c}\right)  \right)  $ up to an order
and isometric isomorphism, that is, $\left(  V,\mathfrak{c}\right)  $ is a
concrete function system on $S\left(  \mathfrak{c}\right)  $.

\subsection{Non-unital balls}

Let $\left(  V,e\right)  $ be a unital $\ast$-normed space. Put
$S_{\varepsilon}=V_{he}^{\ast}\cap\varepsilon\operatorname{ball}V^{\ast}$ for
$\varepsilon>0$. Thus $\operatorname{ball}V$ is unital whenever
$\operatorname{ball}V=S_{1}^{\circ}$, and in this case $\operatorname{ball}%
V^{\ast}=\operatorname{abc}\left(  S_{1}\right)  ^{-w^{\ast}}$. In the case of
a non-unital $\operatorname{ball}V$ we can pursue the property to be
$\operatorname{ball}V\sim S_{\varepsilon}^{\circ}$ for some $\varepsilon>0$.
Recall that the equivalence $\operatorname{ball}V\sim S_{\varepsilon}^{\circ}$
of balls means that $rS_{\varepsilon}^{\circ}\subseteq\operatorname{ball}%
V\subseteq RS_{\varepsilon}^{\circ}$ for some positive real $r$ and $R$. If
$\operatorname{ball}V\sim S_{1}^{\circ}$, then we also refer it to the unital
case. In this case, the norms $\left\Vert \cdot\right\Vert $ and $\left\Vert
\cdot\right\Vert _{e}$ are equivalent, and $e$ turns out to be a unit vector
with respect to the norm $\left\Vert \cdot\right\Vert _{e}$. By Proposition
\ref{propAp1}, every operator system (if we ignore its quantizations or
noncommutative structures) turns out to be a function system up to an
equivalent norm (see also Corollary \ref{corOKE31}). In the non-unital case,
$e$ may not be a unit vector.

\begin{lemma}
\label{lemUNS1}Let $\left(  V,e\right)  $ be a unital $\ast$-normed space with
its state space $S$. Then $\operatorname{ball}V\sim S_{2}^{\circ}$.
\end{lemma}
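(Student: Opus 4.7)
The plan is to establish two inclusions: $\operatorname{ball}V\subseteq 2S_{2}^{\circ}$ (easy) and $S_{2}^{\circ}\subseteq C\operatorname{ball}V$ for some constant $C$ (the nontrivial direction). The first follows directly from the defining inclusion $S_{2}\subseteq 2\operatorname{ball}V^{\ast}$: for every $v\in\operatorname{ball}V$ and $y\in S_{2}$,
\[
\left\vert \left\langle v,y\right\rangle \right\vert \leq\left\Vert v\right\Vert \left\Vert y\right\Vert \leq 2,
\]
whence $v\in 2S_{2}^{\circ}$.

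For the reverse inclusion I would work on the dual side. By the bipolar theorem, $S_{2}^{\circ}\subseteq C\operatorname{ball}V$ is equivalent to
\[
\operatorname{ball}V^{\ast}\subseteq C\operatorname{abc}\left(  S_{2}\right)^{-w^{\ast}}.
\]
Decomposing any $\varphi\in\operatorname{ball}V^{\ast}$ as $\operatorname{Re}\varphi+i\operatorname{Im}\varphi$ with $\operatorname{Re}\varphi,\operatorname{Im}\varphi\in\operatorname{ball}V_{h}^{\ast}$ reduces the task to showing that every hermitian $\psi\in\operatorname{ball}V_{h}^{\ast}$ is a bounded absolutely convex combination of elements of $S_{2}$.

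Fix any $\phi_{0}\in S_{1}\subseteq S_{2}$ (this set is nonempty because $e$ is a unit). For $\psi\in\operatorname{ball}V_{h}^{\ast}$ set $t=\left\langle e,\psi\right\rangle $; then $\left\vert t\right\vert \leq\left\Vert e\right\Vert $. The perturbation $\psi+\phi_{0}$ (respectively $\phi_{0}-\psi$) is hermitian, of norm at most $2$, with $\left\langle e,\cdot\right\rangle $-value $1+t$ (resp.\ $1-t$), which is strictly positive when $t\geq 0$ (resp.\ $t\leq 0$). In the case $t\geq 0$ the normalized functional $y_{+}:=\left(  \psi+\phi_{0}\right)  /\left(  1+t\right)  $ therefore lies in $S_{2}$, and
\[
\psi=\left(  1+t\right)  y_{+}-\phi_{0}\in\left(  2+t\right)\operatorname{abc}\left(  S_{2}\right);
\]
the case $t\leq 0$ is symmetric using $y_{-}:=\left(  \phi_{0}-\psi\right)/\left(  1-t\right)  $. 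In either case $\psi\in\left(  2+\left\Vert e\right\Vert \right)  \operatorname{abc}\left(  S_{2}\right)  $, so $\operatorname{ball}V_{h}^{\ast}\subseteq\left(  2+\left\Vert e\right\Vert\right)  \operatorname{abc}\left(  S_{2}\right)  $, and splitting $\varphi$ into its hermitian real and imaginary parts yields the desired dual bound with $C=4+2\left\Vert e\right\Vert $.

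The main obstacle is the absence of a Jordan-type orthogonal decomposition of hermitian functionals in a general unital $\ast$-normed space (the very tool used via Theorem \ref{thOS1} in Corollary \ref{corabcSV} for operator systems). One must instead build the decomposition by hand, anchored at a single state $\phi_{0}\in S_{1}$. This is precisely what forces the factor $2$ in the definition of $S_{2}$ (needed to absorb the perturbation $\phi_{0}$) and makes the final equivalence constants depend on $\left\Vert e\right\Vert $; since the lemma only asserts equivalence of balls, this dependence is harmless.
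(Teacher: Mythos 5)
Your argument is correct and is essentially the paper's own proof: the paper likewise splits $\varphi\in\operatorname{ball}V^{\ast}$ into hermitian real and imaginary parts, fixes a state $y\in S_{1}$, forms $w=\pm z+y$ with $a=\left\langle e,w\right\rangle\in\left[1,1+\left\Vert e\right\Vert\right]$, and normalizes $u=a^{-1}w\in S_{2}$ to get $z\in\left(2+\left\Vert e\right\Vert\right)\operatorname{abc}\left(S_{2}\right)$, yielding the same constant $2\left(2+\left\Vert e\right\Vert\right)$ and the same easy inclusion $S_{2}\subseteq2\operatorname{ball}V^{\ast}$ on the other side. Your anchored perturbation $y_{\pm}$ is exactly this construction, so no further comment is needed.
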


\begin{proof}
We use the arguments from \cite[Proposition 4.1]{DSbM} for the quantum
systems. First note that $\operatorname{Re}\left(  \operatorname{ball}V^{\ast
}\right)  \subseteq V_{h}^{\ast}\cap\operatorname{ball}V^{\ast}%
=\operatorname{ball}V_{h}^{\ast}$. Similarly, $\operatorname{Im}\left(
\operatorname{ball}V^{\ast}\right)  \subseteq\operatorname{ball}V_{h}^{\ast}$
and $\operatorname{ball}V^{\ast}\subseteq\operatorname{Re}\left(
\operatorname{ball}V^{\ast}\right)  +i\operatorname{Im}\left(
\operatorname{ball}V^{\ast}\right)  \subseteq2\operatorname{abc}\left(
\operatorname{ball}V_{h}^{\ast}\right)  $.

Now take $z\in\operatorname{ball}V_{h}^{\ast}$ and fix $y\in S=V_{he}^{\ast
}\cap\operatorname{ball}V^{\ast}$ (by assumption $S\neq\varnothing$). Note
that $\left\langle e,z\right\rangle \in\mathbb{R}$ with $-\left\Vert
e\right\Vert \leq\left\langle e,z\right\rangle \leq\left\Vert e\right\Vert $,
that is, $0\leq\left\langle e,\pm z\right\rangle \leq\left\Vert e\right\Vert
$. Put $w=\pm z+y$ with $a=\left\langle e,w\right\rangle $, that is, $1\leq
a\leq1+\left\Vert e\right\Vert $. If $u=a^{-1}w$ then $u\in V_{he}^{\ast}$ and
$\left\Vert u\right\Vert \leq a^{-1}\left\Vert w\right\Vert \leq\left\Vert \pm
z\right\Vert +\left\Vert y\right\Vert \leq2$, which means that $u\in
V_{he}^{\ast}\cap2\operatorname{ball}V^{\ast}=S_{2}$. But $z=\mp au\pm y$ and
$\left\vert \mp a\right\vert +\left\vert \pm1\right\vert \leq2+\left\Vert
e\right\Vert $, that is, $z\in\left(  2+\left\Vert e\right\Vert \right)
\operatorname{abc}\left(  S_{2}\right)  $. Thus $\operatorname{ball}V^{\ast
}\subseteq2\operatorname{abc}\left(  \operatorname{ball}V_{h}^{\ast}\right)
\subseteq2\left(  2+\left\Vert e\right\Vert \right)  \operatorname{abc}\left(
S_{2}\right)  $. Taking into account that $S_{2}\subseteq2\operatorname{ball}%
V^{\ast}$, we conclude that
\begin{equation}
2^{-1}\operatorname{abc}\left(  S_{2}\right)  \subseteq\operatorname{ball}%
V^{\ast}\subseteq2\left(  2+\left\Vert e\right\Vert \right)
\operatorname{abc}\left(  S_{2}\right)  , \label{abs1}%
\end{equation}
that is, $\operatorname{abc}\left(  S_{2}\right)  \sim\operatorname{ball}%
V^{\ast}$ or $S_{2}^{\circ}=\left(  \operatorname{abc}\left(  S_{2}\right)
\right)  ^{\circ}\sim\operatorname{ball}V$.
\end{proof}

The assertion of Lemma \ref{lemUNS1} can be generalized in the following way.

\begin{theorem}
\label{propUNS1}Let $\left(  V,e\right)  $ be a unital $\ast$-normed space
with its state space $S$. Then $\operatorname{ball}V\sim S_{\varepsilon
}^{\circ}$ for every $\varepsilon>1/\left\Vert e\right\Vert $.
\end{theorem}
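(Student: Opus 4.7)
I would split the equivalence $\operatorname{ball}V\sim S_{\varepsilon}^{\circ}$ into two one-sided inclusions. The easy half comes for free from $S_{\varepsilon}\subseteq\varepsilon\operatorname{ball}V^{\ast}$: taking absolute polars in $V$ gives $\operatorname{ball}V\subseteq\varepsilon S_{\varepsilon}^{\circ}$. The substantive task is to find $C=C(\varepsilon,\left\Vert e\right\Vert )>0$ with $S_{\varepsilon}^{\circ}\subseteq C\operatorname{ball}V$, which by the Absolute Bipolar Theorem is equivalent to $\operatorname{ball}V^{\ast}\subseteq C\operatorname{abc}(S_{\varepsilon})^{-w^{\ast}}$.

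For this I would first invoke Hahn--Banach to produce a hermitian functional $y_{0}\in V_{h}^{\ast}$ with $\left\langle e,y_{0}\right\rangle =1$ and $\left\Vert y_{0}\right\Vert =1/\left\Vert e\right\Vert $ (extend the scalar functional on $\mathbb{C}e$ sending $e\mapsto 1$, then take its real part, as in Subsection \ref{subsecUNS1}). This minimal-norm $y_{0}$ is precisely what makes the threshold $\varepsilon>1/\left\Vert e\right\Vert$ necessary: for $\varepsilon<1/\left\Vert e\right\Vert$ the slice $S_{\varepsilon}$ is empty. Setting $r:=\varepsilon-1/\left\Vert e\right\Vert >0$ and $K:=\{y\in V_{h}^{\ast}:\left\langle e,y\right\rangle =0\}$, a direct norm estimate shows
\[
y_{0}+(K\cap r\operatorname{ball}V^{\ast})\subseteq S_{\varepsilon}\quad\text{and}\quad -y_{0}+(K\cap r\operatorname{ball}V^{\ast})\subseteq -S_{\varepsilon}.
\]
Taking convex combinations of the form $t(y_{0}+\ell)-(1-t)(y_{0}-\ell)$ for $t\in[0,1]$ and $\ell\in K\cap r\operatorname{ball}V^{\ast}$ yields the "cylinder" inclusion $[-1,1]y_{0}+(K\cap r\operatorname{ball}V^{\ast})\subseteq\operatorname{abc}(S_{\varepsilon})$.

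Given $z\in\operatorname{ball}V_{h}^{\ast}$, decompose $z=\left\langle e,z\right\rangle y_{0}+\ell$ with $\ell=z-\left\langle e,z\right\rangle y_{0}\in K$. The bounds $\left\vert\left\langle e,z\right\rangle \right\vert \leq\left\Vert e\right\Vert$ and $\left\Vert \ell\right\Vert \leq\left\Vert z\right\Vert +\left\vert\left\langle e,z\right\rangle \right\vert\left\Vert y_{0}\right\Vert \leq 2$ place $z$ inside $M_{0}\operatorname{abc}(S_{\varepsilon})$ for $M_{0}:=\max\{\left\Vert e\right\Vert ,\,2/r\}$, so $\operatorname{ball}V_{h}^{\ast}\subseteq M_{0}\operatorname{abc}(S_{\varepsilon})$. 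Combining with the estimate $\operatorname{ball}V^{\ast}\subseteq 2\operatorname{abc}(\operatorname{ball}V_{h}^{\ast})$ already obtained in the proof of Lemma \ref{lemUNS1} gives $\operatorname{ball}V^{\ast}\subseteq 2M_{0}\operatorname{abc}(S_{\varepsilon})$; polarizing back yields $(2M_{0})^{-1}S_{\varepsilon}^{\circ}\subseteq\operatorname{ball}V$, and together with the easy half we conclude $\operatorname{ball}V\sim S_{\varepsilon}^{\circ}$.

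The main obstacle is manufacturing the thick slab $y_{0}+(K\cap r\operatorname{ball}V^{\ast})$ inside $S_{\varepsilon}$: as $\varepsilon\downarrow 1/\left\Vert e\right\Vert $ the slack $r$ shrinks to zero, the kernel ball that fits collapses, and $M_{0}\sim 2/r$ explodes. This is exactly what one should expect at the sharp threshold, where $S_{\varepsilon}$ degenerates to (at best) a singleton and no uniform equivalence of balls can survive.
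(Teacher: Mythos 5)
Your argument is correct, and it takes a genuinely different and more direct route than the paper. The paper first rescales to reduce to the case $\left\Vert e\right\Vert =1$, then bootstraps: starting from the case $\varepsilon=2$ (Lemma \ref{lemUNS1}, with constants $2^{-1}$ and $6$), it iteratively trades a constant $R$ for $R+1$ while shrinking $\varepsilon$ to $\sigma=\varepsilon-(\varepsilon-1)\delta$, and reaches an arbitrary $\varepsilon>1$ only in the limit of a recursively defined sequence $\sigma_{n}\downarrow1$; the resulting equivalence constants are non-explicit (of the form $6+n+1$ with $n$ depending on how close $\varepsilon$ is to the threshold). You instead produce, by Hahn--Banach plus taking real parts, a hermitian norming functional $y_{0}$ with $\left\langle e,y_{0}\right\rangle =1$ and $\left\Vert y_{0}\right\Vert =1/\left\Vert e\right\Vert $, and use it to exhibit the cylinder $[-1,1]y_{0}+\left(  K\cap r\operatorname{ball}V^{\ast}\right)  $ inside $\operatorname{abc}\left(  S_{\varepsilon}\right)  $ with $r=\varepsilon-1/\left\Vert e\right\Vert $; every step checks out (the combination $t\left(  y_{0}+\ell\right)  -\left(  1-t\right)  \left(  y_{0}-\ell\right)  =\left(  2t-1\right)  y_{0}+\ell$ is indeed absolutely convex, the decomposition $z=\left\langle e,z\right\rangle y_{0}+\ell$ has $\ell$ hermitian with $\left\langle e,\ell\right\rangle =0$, and the passage from $\operatorname{ball}V_{h}^{\ast}$ to $\operatorname{ball}V^{\ast}$ via the factor $2$ is exactly the estimate in the proof of Lemma \ref{lemUNS1}). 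What your approach buys: a one-step proof valid for all $\varepsilon>1/\left\Vert e\right\Vert $ without the rescaling reduction or the recursion, with explicit constants $\varepsilon^{-1}\operatorname{ball}V\subseteq S_{\varepsilon}^{\circ}\subseteq2\max\left\{  \left\Vert e\right\Vert ,2/r\right\}  \operatorname{ball}V$ that make the blow-up at the sharp threshold transparent; moreover it never uses the hypothesis $S\neq\varnothing$, since $y_{0}\in S_{\varepsilon}$ is manufactured directly, so the statement holds for any $\ast$-normed space with a nonzero hermitian $e$. What the paper's route buys is mainly that it recycles Lemma \ref{lemUNS1} verbatim and stays close to the quantum-system argument of \cite{DSbM}, at the cost of the recursive sequence and weaker control of constants.
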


\begin{proof}
For a positive real $r$ we put $V_{h\left(  re\right)  }^{\ast}=\left\{  y\in
V_{h}^{\ast}:\left\langle re,y\right\rangle =1\right\}  $ to be the real
hyperplane of all $re$-unital hermitian functionals from $V^{\ast}$. Note that
$V_{h\left(  re\right)  }^{\ast}=r^{-1}V_{he}^{\ast}$, and as above let us
define the sets $S_{re,\varepsilon}=V_{h\left(  re\right)  }^{\ast}%
\cap\varepsilon\operatorname{ball}V^{\ast}$ for all $\varepsilon>0$. Prove
that $S_{re,\varepsilon}=r^{-1}S_{r\varepsilon}$. Indeed, $y\in
S_{re,\varepsilon}$ iff $\left\Vert y\right\Vert \leq\varepsilon$,
$\left\langle re,y\right\rangle =1$, which means $y=r^{-1}ry$ with $\left\Vert
ry\right\Vert \leq r\varepsilon$, $\left\langle e,ry\right\rangle =1$, that
is, $ry\in S_{r\varepsilon}$. Conversely, if $y\in S_{r\varepsilon}$ then
$\left\Vert r^{-1}y\right\Vert \leq r^{-1}r\varepsilon=\varepsilon$ and
$\left\langle re,r^{-1}y\right\rangle =\left\langle e,y\right\rangle =1$,
which means that $r^{-1}y\in S_{re,\varepsilon}$. Thus $S_{re,\varepsilon
}=r^{-1}S_{r\varepsilon}$ and by passing to the polars we obtain that
$S_{re,\varepsilon}^{\circ}=rS_{r\varepsilon}^{\circ}$. In particular, for
$\varepsilon>1/\left\Vert e\right\Vert $ we have $rS_{\varepsilon}^{\circ
}=rS_{r\varepsilon/r}^{\circ}=S_{re,\varepsilon/r}^{\circ}$ with
$\varepsilon/r>1/\left(  \left\Vert e\right\Vert r\right)  $. Hence
$\operatorname{ball}V\sim S_{\varepsilon}^{\circ}$, $\varepsilon>1/\left\Vert
e\right\Vert $ iff $\operatorname{ball}V\sim S_{re,\varepsilon}^{\circ}$ for
all $\varepsilon>1/\left(  \left\Vert e\right\Vert r\right)  $. Thus the
assertion is reduced to the case of a unit vector $e$ (just put $r=\left\Vert
e\right\Vert ^{-1}$), and we have to prove that $\operatorname{ball}V\sim
S_{\varepsilon}^{\circ}$ for every $\varepsilon>1$.

By Lemma \ref{lemUNS1}, $\operatorname{ball}V\sim S_{2}^{\circ}$, namely,
$2^{-1}\operatorname{abc}\left(  S_{2}\right)  \subseteq\operatorname{ball}%
V^{\ast}\subseteq6\operatorname{abc}\left(  S_{2}\right)  $ (see
(\ref{abs1})). Assume that $\varepsilon^{-1}\operatorname{abc}\left(
S_{\varepsilon}\right)  \subseteq\operatorname{ball}V^{\ast}\subseteq
R\operatorname{abc}\left(  S_{\varepsilon}\right)  $ for some $\varepsilon>1$
and real $R$. Pick $0<\delta\leq\left(  2R+1\right)  ^{-1}$ and $\sigma
=\varepsilon-\left(  \varepsilon-1\right)  \delta$. Notice that
\[
1<\dfrac{2R\varepsilon+1}{2R+1}=\varepsilon-\dfrac{\varepsilon-1}{2R+1}%
\leq\varepsilon-\left(  \varepsilon-1\right)  \delta=\sigma<\varepsilon,
\]
that is, $\delta<1<\sigma<\varepsilon$. For $\varepsilon=2$ we have $R=6$ and
$\delta\leq13^{-1}$. Prove that $\operatorname{ball}V^{\ast}\subseteq\left(
R+1\right)  \operatorname{abc}\left(  S_{\sigma}\right)  $. Take
$w\in\operatorname{ball}V^{\ast}$. Then $w\in R\operatorname{abc}\left(
S_{\varepsilon}\right)  $, which means that $w=Rz$ with $z\in
\operatorname{abc}\left(  S_{\varepsilon}\right)  $, $z=\sum_{i=1}^{n}%
a_{i}z_{i}$, $\sum\left\vert a_{i}\right\vert \leq1$ and $\left\{
z_{i}\right\}  \subseteq S_{\varepsilon}$. As in the proof of Lemma
\ref{lemUNS1}, fix $y\in S_{1}$ and put $z_{i,\delta}=\left(  1-\delta\right)
z_{i}+\delta y$ with $\left\langle e,z_{i,\delta}\right\rangle =\left(
1-\delta\right)  +\delta=1$. Thus $\left\{  z_{i,\delta}\right\}  \subseteq
V_{he}^{\ast}$ and $\left\Vert z_{i,\delta}\right\Vert \leq\left(
1-\delta\right)  \left\Vert z_{i}\right\Vert +\delta\leq\left(  1-\delta
\right)  \varepsilon+\delta=\sigma$, which means that $\left\{  z_{i,\delta
}\right\}  \subseteq V_{he}^{\ast}\cap\sigma\operatorname{ball}V^{\ast
}=S_{\sigma}$. If $z_{\delta}=\sum_{i=1}^{n}a_{i}z_{i,\delta}$ and $w_{\delta
}=Rz_{\delta}$, then $z_{\delta}\in\operatorname{abc}\left(  S_{\sigma
}\right)  $ and
\[
w_{\delta}=\left(  1-\delta\right)  Rz+R\delta ay=\left(  1-\delta\right)
w+R\delta ay,
\]
where $a=\sum a_{i}$ with $\left\vert a\right\vert \leq1$. It follows that%
\[
\left(  R+1\right)  ^{-1}w=\frac{R}{\left(  1-\delta\right)  \left(
R+1\right)  }z_{\delta}-\frac{\delta R}{\left(  1-\delta\right)  \left(
R+1\right)  }ay
\]
with
\[
\frac{R}{\left(  1-\delta\right)  \left(  R+1\right)  }+\frac{R\delta}{\left(
1-\delta\right)  \left(  R+1\right)  }=\frac{R\left(  1+\delta\right)
}{\left(  R+1\right)  \left(  1-\delta\right)  }\leq\frac{R\left(  1+\left(
2R+1\right)  ^{-1}\right)  }{\left(  R+1\right)  \left(  1-\left(
2R+1\right)  ^{-1}\right)  }=1.
\]
Consequently, $\left(  R+1\right)  ^{-1}w\in\operatorname{abc}\left\{
z_{\delta},ay\right\}  \subseteq\operatorname{abc}\left(  S_{\sigma}\right)  $
(note that $y\in S_{1}\subseteq S_{\sigma}$, $\sigma>1$) or $w\in\left(
R+1\right)  \operatorname{abc}\left(  S_{\sigma}\right)  $. Thus%
\begin{equation}
\sigma^{-1}\operatorname{abc}\left(  S_{\sigma}\right)  \subseteq
\operatorname{ball}V^{\ast}\subseteq\left(  R+1\right)  \operatorname{abc}%
\left(  S_{\sigma}\right)  , \label{abs19}%
\end{equation}
which means that $\operatorname{abc}\left(  S_{\sigma}\right)  \sim
\operatorname{ball}V^{\ast}$.

Now based on Lemma \ref{lemUNS1}, we put $\sigma_{0}=2$, $R=6$, $\delta
_{0}=\left(  2R+1\right)  ^{-1}=1/13$, $\sigma_{1}=\sigma_{0}\left(
1-\delta_{0}\right)  +\delta_{0}=25/13$ and $\delta_{1}=\left(  2\left(
R+1\right)  +1\right)  ^{-1}=1/15$. One can iterate the same argument to
obtain a recursive sequence. Namely, we have $\delta_{n}=\left(  2\left(
6+n\right)  +1\right)  ^{-1}$ and $\sigma_{n}=\sigma_{n-1}\left(
1-\delta_{n-1}\right)  +\delta_{n-1}$, that is,
\[
\sigma_{n}=\sigma_{n-1}\frac{2n+10}{2n+11}+\frac{1}{2n+11},\quad\sigma
_{1}=25/13.
\]
By induction on $n$ one can prove that the recursive sequence is given by the
rule
\[
\sigma_{n}=\frac{693\sqrt{\pi}\Gamma\left(  n+6\right)  }{512\Gamma
(n+13/2)}+1
\]
and $\lim_{n}\sigma_{n}=1$ (the formula itself is due to WolframAlpha
computation programing). Optionally, one can use the relation $\sigma
_{n}=\left(  \sigma_{n-1}-\sigma_{n}\right)  \left(  2n+10\right)  +1$ and the
fact $\sigma_{n}>1$ to conclude that $\sigma_{n}\downarrow1$. Then for every
$\varepsilon>1$ we have $\sigma_{n}<\varepsilon$ for large $n$, and
\[
\varepsilon^{-1}\operatorname{abc}\left(  S_{\varepsilon}\right)
\subseteq\operatorname{ball}V^{\ast}\subseteq\left(  6+n+1\right)
\operatorname{abc}\left(  S_{\sigma_{n}}\right)  \subseteq\left(
6+n+1\right)  \operatorname{abc}\left(  S_{\varepsilon}\right)
\]
by virtue of (\ref{abs19}). Whence $\operatorname{ball}V\sim S_{\varepsilon
}^{\circ}$ for every $\varepsilon>1$.
\end{proof}

\begin{corollary}
\label{corUNS1}Let $\left(  V,e\right)  $ be a unital $\ast$-normed space with
its state space and let $\varepsilon>1/\left\Vert e\right\Vert $. Then
$\left\Vert v\right\Vert _{\varepsilon}=\vee\left\vert \left\langle
v,S_{\varepsilon}\right\rangle \right\vert $, $v\in V$ is a norm on $V$
equivalent to the original norm of $V$.
\end{corollary}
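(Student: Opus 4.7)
The plan is to derive the corollary directly from Theorem \ref{propUNS1} via gauge/polar duality. First I would observe that $\left\Vert v\right\Vert _{\varepsilon}=\vee\left\vert \left\langle v,S_{\varepsilon}\right\rangle \right\vert$ is automatically a seminorm on $V$: the supremum is finite because $S_{\varepsilon}\subseteq\varepsilon\operatorname{ball}V^{\ast}$ gives the trivial upper bound $\left\Vert v\right\Vert _{\varepsilon}\leq\varepsilon\left\Vert v\right\Vert$, while absolute homogeneity and the triangle inequality are inherited from the linearity of the pairing $\left\langle \cdot,\cdot\right\rangle$. Thus the only nontrivial content of the statement is positive definiteness of $\left\Vert \cdot\right\Vert _{\varepsilon}$ together with the lower bound in the equivalence.

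The second step is to identify $\left\Vert \cdot\right\Vert _{\varepsilon}$ as the Minkowski functional of the absolute polar $S_{\varepsilon}^{\circ}$. Replacing $S_{\varepsilon}$ with its absolutely convex hull $\operatorname{abc}\left(  S_{\varepsilon}\right)$ does not alter the supremum of $\left\vert \left\langle v,\cdot\right\rangle \right\vert$, and for an absolutely convex set the support functional coincides with the Minkowski functional (gauge) of its polar. This yields
\[
\left\Vert v\right\Vert _{\varepsilon}=\vee\left\vert \left\langle v,\operatorname{abc}\left(  S_{\varepsilon}\right)  \right\rangle \right\vert =\wedge\left\{  t>0:v\in tS_{\varepsilon}^{\circ}\right\}  ,\quad v\in V.
\]

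Finally I would invoke Theorem \ref{propUNS1}, which provides constants $0<r\leq R$ with $rS_{\varepsilon}^{\circ}\subseteq\operatorname{ball}V\subseteq RS_{\varepsilon}^{\circ}$ whenever $\varepsilon>1/\left\Vert e\right\Vert$. Passing to Minkowski functionals reverses these inclusions and gives
\[
R^{-1}\left\Vert v\right\Vert _{\varepsilon}\leq\left\Vert v\right\Vert \leq r^{-1}\left\Vert v\right\Vert _{\varepsilon},\quad v\in V.
\]
The left inequality forces $\left\Vert v\right\Vert _{\varepsilon}=0$ to imply $v=0$, so $\left\Vert \cdot\right\Vert _{\varepsilon}$ is a genuine norm, while the two inequalities together express its equivalence with the original norm of $V$. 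The only real subtlety, which I would state carefully, is the bipolar identification that converts the support functional over $S_{\varepsilon}$ into the gauge of $S_{\varepsilon}^{\circ}$; once that identification is in place the corollary is essentially a repackaging of Theorem \ref{propUNS1}, with no further obstacle to overcome.
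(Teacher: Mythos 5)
Your argument is correct and is exactly the paper's proof: identify $\left\Vert \cdot\right\Vert _{\varepsilon}$ with the Minkowski functional of $S_{\varepsilon}^{\circ}$ and then apply Theorem \ref{propUNS1}, which gives $rS_{\varepsilon}^{\circ}\subseteq\operatorname{ball}V\subseteq RS_{\varepsilon}^{\circ}$ and hence both definiteness and equivalence of the norms. You merely spell out the routine gauge/polar details that the paper leaves implicit.
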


\begin{proof}
The norm $\left\Vert \cdot\right\Vert _{\varepsilon}$ is the Minkowski
functional of $S_{\varepsilon}^{\circ}$. It remains to use Theorem
\ref{propUNS1}.
\end{proof}

\subsection{The $\varepsilon$-cones of a unital $\ast$-normed space}

As above let $\left(  V,e\right)  $ be a unital $\ast$-normed space with its
hermitian $\operatorname{ball}V$ and the unit $e$. If $\operatorname{ball}V$
is unital then $V$ is a function system on $S_{1}$ (up to an equivalent norm)
by virtue of Lemma \ref{lemUNS0}. If $\operatorname{ball}V$ is not unital then
$\operatorname{ball}V\sim S_{\varepsilon}^{\circ}$ for every $\varepsilon
>1/\left\Vert e\right\Vert $ thanks to Theorem \ref{propUNS1}. Put
$\mathfrak{c}_{\varepsilon}$ to be the set of those $v\in V_{h}$ such that
$\left\langle v,S_{\varepsilon}\right\rangle \geq0$.

\begin{theorem}
\label{thFS1}Let $\left(  V,e\right)  $ be a unital $\ast$-normed space with
its state space $S$. Then $\mathfrak{c}_{\varepsilon}$ is a separated, closed,
unital, cone in $V$, $S\left(  \mathfrak{c}_{\varepsilon}\right)
=S_{\varepsilon}$ and $\left(  V,\mathfrak{c}_{\varepsilon}\right)  $ is a
concrete function system on $S_{\varepsilon}$ whose $\varepsilon$-norm is
equivalent to the original one of $V$ for every $\varepsilon>1/\left\Vert
e\right\Vert $.
\end{theorem}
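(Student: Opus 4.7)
The plan is to renorm $V$ with the equivalent $\varepsilon$-norm $\|\cdot\|_{\varepsilon}$ so that $(V,e)$ becomes an abstract function system whose state space is precisely $S_{\varepsilon}$, and then apply Lemma~\ref{lemUNS0} to this renormed space to obtain every claim in one stroke.

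The first step is to invoke Corollary~\ref{corUNS1}, by which $\|\cdot\|_{\varepsilon}$ is a norm on $V$ equivalent to the original one whenever $\varepsilon>1/\|e\|$. Since $\|\cdot\|_{\varepsilon}$ is by construction the Minkowski functional of $S_{\varepsilon}^{\circ}$, the new unit ball is $\operatorname{ball}(V,\|\cdot\|_{\varepsilon})=S_{\varepsilon}^{\circ}$. I would next verify that $S_{\varepsilon}^{\circ}$ is hermitian (this transfers from $S_{\varepsilon}$, whose hermitianness is inherited from the involutions on $V_{he}^{\ast}$ and $\operatorname{ball}V^{\ast}$), so that $\|\cdot\|_{\varepsilon}$ is a $\ast$-norm, and that $e$ remains a unit element because $\langle e,\varphi\rangle=1$ for all $\varphi\in S_{\varepsilon}$.

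The crucial step, and the one I expect to demand a short bipolar argument, is the identification of the state space of $(V,\|\cdot\|_{\varepsilon})$ with $S_{\varepsilon}$ itself. Writing $\operatorname{ball}_{\varepsilon}V^{\ast}$ for the dual unit ball in the $\varepsilon$-norm, the absolute bipolar theorem gives
\[
\operatorname{ball}_{\varepsilon}V^{\ast}=(S_{\varepsilon}^{\circ})^{\circ}=\overline{\operatorname{abc}(S_{\varepsilon})}^{\,w^{\ast}}.
\]
Since $S_{\varepsilon}\subseteq\varepsilon\operatorname{ball}V^{\ast}$ and the latter set is absolutely convex and $w^{\ast}$-closed, $\overline{\operatorname{abc}(S_{\varepsilon})}^{\,w^{\ast}}\subseteq\varepsilon\operatorname{ball}V^{\ast}$, from which
\[
V_{he}^{\ast}\cap\operatorname{ball}_{\varepsilon}V^{\ast}=V_{he}^{\ast}\cap\varepsilon\operatorname{ball}V^{\ast}=S_{\varepsilon}.
\]
Hence $(V,e)$ equipped with $\|\cdot\|_{\varepsilon}$ is an abstract function system in the sense of Subsection~\ref{subsecUNS1} whose state space is exactly $S_{\varepsilon}$.

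Finally, Lemma~\ref{lemUNS0} applied to this function system delivers directly that $\mathfrak{c}_{\varepsilon}=\{v\in V_{h}:\langle v,S_{\varepsilon}\rangle\geq0\}$ is a separated, closed, unital cone with $S(\mathfrak{c}_{\varepsilon})=S_{\varepsilon}$ and that $\|\cdot\|_{\varepsilon}$ coincides with the order norm produced by this cone. The Kadison-type embedding of Corollary~\ref{corOKE31} then realizes $(V,\mathfrak{c}_{\varepsilon})$ isometrically and order-isomorphically as a unital self-adjoint subspace of $C(S_{\varepsilon})$, i.e.\ a concrete function system on $S_{\varepsilon}$; the equivalence of the $\varepsilon$-norm with the original norm is already in Corollary~\ref{corUNS1}. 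The main obstacle is thus purely the polar calculation identifying $\operatorname{ball}_{\varepsilon}V^{\ast}\cap V_{he}^{\ast}$ with $S_{\varepsilon}$; once this is settled, the entire statement follows from the function-system toolkit already established.
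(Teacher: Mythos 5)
Your proposal is correct and follows essentially the same route as the paper: the heart of both arguments is the bipolar identification $V_{he}^{\ast}\cap S_{\varepsilon}^{\circ\circ}=V_{he}^{\ast}\cap\overline{\operatorname{abc}(S_{\varepsilon})}^{\,w^{\ast}}=S_{\varepsilon}$ (your key display), after which $S_{\varepsilon}^{\circ}$ is a unital ball, Lemma \ref{lemUNS0} gives the separated, closed, unital cone $\mathfrak{c}_{\varepsilon}$ with $S(\mathfrak{c}_{\varepsilon})=S_{\varepsilon}$, and Corollary \ref{corUNS1} (via Theorem \ref{propUNS1}) gives the equivalence of the $\varepsilon$-norm with the original norm. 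The only difference is organizational: the paper first proves $S_{\varepsilon}^{\circ}=\left(S_{\varepsilon}^{\circ\circ}\cap V_{he}^{\ast}\right)^{\circ}$ and then identifies the state space, while you renorm first and identify the state space directly, which is the same polar calculus.
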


\begin{proof}
Consider the absolute polar $S_{\varepsilon}^{\circ}$ of $S_{\varepsilon}$.
Prove that $S_{\varepsilon}^{\circ}$ is a unital ball, whose state space
equals to $V_{he}^{\ast}\cap S_{\varepsilon}^{\circ\circ}$. Note that
$S_{\varepsilon}=V_{he}^{\ast}\cap S_{\varepsilon}\subseteq V_{he}^{\ast}\cap
S_{\varepsilon}^{\circ\circ}\subseteq S_{\varepsilon}^{\circ\circ}$, which in
turn implies that $S_{\varepsilon}^{\circ}=S_{\varepsilon}^{\circ\circ\circ
}\subseteq\left(  V_{he}^{\ast}\cap S_{\varepsilon}^{\circ\circ}\right)
^{\circ}\subseteq S_{\varepsilon}^{\circ}$. Thus $S_{\varepsilon}^{\circ
}=\left(  S_{\varepsilon}^{\circ\circ}\cap V_{he}^{\ast}\right)  ^{\circ}$,
that is, the ball $S_{\varepsilon}^{\circ}$ is the polar of its state space
$S_{\varepsilon}^{\circ\circ}\cap V_{he}^{\ast}$. It means that
$S_{\varepsilon}^{\circ}$ is a unital ball. Thus $V$ equipped with the
Minkowski functional of $S_{\varepsilon}^{\circ}$ turns out to be a function
system. Since $S_{\varepsilon}\subseteq\varepsilon\operatorname{ball}V^{\ast}%
$, it follows that $\operatorname{abc}\left(  S_{\varepsilon}\right)
^{-w^{\ast}}\subseteq\varepsilon\operatorname{ball}V^{\ast}$ and%
\[
S_{\varepsilon}\subseteq V_{he}^{\ast}\cap S_{\varepsilon}^{\circ\circ}%
=V_{he}^{\ast}\cap\operatorname{abc}\left(  S_{\varepsilon}\right)
^{-w^{\ast}}\subseteq V_{he}^{\ast}\cap\varepsilon\operatorname{ball}V^{\ast
}=S_{\varepsilon},
\]
that is, $S_{\varepsilon}=V_{he}^{\ast}\cap S_{\varepsilon}^{\circ\circ}$
turns out to be the state space of the unital ball $S_{\varepsilon}^{\circ}$.
In particular, $\mathfrak{c}_{\varepsilon}=\left\{  v\in V_{h}:\left\langle
v,V_{he}^{\ast}\cap S_{\varepsilon}^{\circ\circ}\right\rangle \geq0\right\}
$. By Lemma \ref{lemUNS0}, $\mathfrak{c}_{\varepsilon}$ is a separated,
closed, unital cone in $V$ such that $S\left(  \mathfrak{c}_{\varepsilon
}\right)  =V_{he}^{\ast}\cap S_{\varepsilon}^{\circ\circ}=S_{\varepsilon}$.
Thus $\left(  V,\mathfrak{c}_{\varepsilon}\right)  $ turns out to be a
concrete function system on $S_{\varepsilon}$. The norm $\left\Vert
v\right\Vert _{\varepsilon}=\vee\left\vert \left\langle v,S_{\varepsilon
}\right\rangle \right\vert $, $v\in V$ of this function system is equivalent
to the original one by Corollary \ref{corUNS1}.
\end{proof}

Thus every unital $\ast$-normed space $V$ turns out to be a function system
equipped with the cones $\mathfrak{c}_{\varepsilon}$, $\varepsilon
>1/\left\Vert e\right\Vert $. In this case, we say that a unital $\ast$-normed
space $V$ can be equipped with \textit{a projective positivity. }Let
us\textit{ }illustrate the projective positivity in the case of the
$2$-dimensional $\ell^{p}$ spaces. First consider the operator $\ell^{\infty}%
$-system $V=\mathbb{C}\overset{\infty}{\oplus}\mathbb{C}$ with its unit
$e=\left(  1,1\right)  $. In this case, $V^{\ast}=\mathbb{C}\overset{1}{\oplus
}\mathbb{C}$, $\operatorname{ball}V^{\ast}=\left\{  w=\left(  w_{0}%
,w_{1}\right)  \in\mathbb{C}^{2}:\left\vert w_{0}\right\vert +\left\vert
w_{1}\right\vert \leq1\right\}  $ and
\[
\left\langle z,w^{\ast}\right\rangle =\left\langle z^{\ast},w\right\rangle
^{\ast}=\left(  z_{0}^{\ast}w_{0}+z_{1}^{\ast}w_{1}\right)  ^{\ast}=z_{0}%
w_{0}^{\ast}+z_{1}w_{1}^{\ast}%
\]
for all $z=\left(  z_{0},z_{1}\right)  \in V$ and $w=\left(  w_{0}%
,w_{1}\right)  \in V^{\ast}$, which means that $w^{\ast}=\left(  w_{0}^{\ast
},w_{1}^{\ast}\right)  $. In particular, $V_{h}^{\ast}=\mathbb{R}%
\overset{1}{\oplus}\mathbb{R}$, $V_{he}^{\ast}=\left\{  s=\left(  s_{0}%
,s_{1}\right)  \in\mathbb{R}^{2}:s_{0}+s_{1}=1\right\}  $ and
\[
S_{1}=V_{he}^{\ast}\cap\operatorname{ball}\left(  \mathbb{R}\overset{1}{\oplus
}\mathbb{R}\right)  =\left\{  s=\left(  s_{0},s_{1}\right)  \in\mathbb{R}%
^{2}:s_{0}+s_{1}=1,s_{j}\geq0\right\}
\]
is the line segment from $\mathbb{R}^{2}$ connecting $\left(  0,1\right)  $
and $\left(  1,0\right)  $. In particular, for $z\in V$ we have
\[
\left\Vert z\right\Vert _{e}=\vee\left\vert \left\langle z,S_{1}\right\rangle
\right\vert =\vee\left\{  \left\vert s_{0}z_{0}+s_{1}z_{1}\right\vert
:s_{0}+s_{1}=1,s_{j}\geq0\right\}  =\left\vert z_{0}\right\vert \vee\left\vert
z_{1}\right\vert =\left\Vert z\right\Vert _{\infty},
\]
that is, $\operatorname{ball}V$ is unital and $V$ is a function system. For
$r\in\mathbb{R}^{2}$ we have $\left\langle r,S_{1}\right\rangle \geq0$ iff
$r_{j}\geq0$. Hence $\mathfrak{c}_{1}=\mathbb{R}_{+}^{2}$ as it used to be,
where $\mathbb{R}_{+}=\left\{  x\geq0\right\}  $. Now for every $1\leq
p<\infty$ consider the $\ast$-vector space $V=\mathbb{C}\overset{p}{\oplus
}\mathbb{C}$ with the unit $e_{p}=2^{-1/p}\left(  1,1\right)  $. If $z\in V$
then $z^{\ast}=\left(  z_{0}^{\ast},z_{1}^{\ast}\right)  $ and $\left\Vert
z^{\ast}\right\Vert _{p}=\left(  \left\vert z_{0}^{\ast}\right\vert
^{p}+\left\vert z_{1}^{\ast}\right\vert ^{p}\right)  ^{1/p}=\left\Vert
z\right\Vert _{p}$, which means that $\operatorname{ball}V$ is a hermitian
ball. Thus $\left(  V,e_{p}\right)  $ is a unital $\ast$-normed space. In this
case, $V_{h}^{\ast}=\mathbb{R}\overset{q}{\oplus}\mathbb{R}$ with the
coujugate couple $\left(  p,q\right)  $, and $V_{he}^{\ast}=\left\{  s=\left(
s_{0},s_{1}\right)  \in\mathbb{R}^{2}:s_{0}+s_{1}=2^{1/p}\right\}  $. In
particular,%
\[
S_{1}=V_{he}^{\ast}\cap\operatorname{ball}\left(  \mathbb{R}\overset{q}{\oplus
}\mathbb{R}\right)  =\left\{  s=\left(  s_{0},s_{1}\right)  \in\mathbb{R}%
^{2}:s_{0}^{q}+s_{1}^{q}\leq1,s_{0}+s_{1}=2^{1/p}\right\}  =\left\{
e_{q}\right\}  ,
\]
for $s_{0}+s_{1}=2^{1/p}$ is the tangent line to $\operatorname{ball}\left(
\mathbb{R}\overset{q}{\oplus}\mathbb{R}\right)  $ at $e_{q}$. Thus
$S_{1}^{\circ}$ is not separated and $\operatorname{ball}V$ is not unital.
Nonetheless, every $S_{\varepsilon}$ for $\varepsilon>1$ provides a unital
ball thanks to Theorem \ref{thFS1}.

It is worth to notify that $S_{\varepsilon}$ for $\varepsilon=2^{1/p}$
generates exactly the same cone $\mathbb{R}_{+}^{2}$. But the same phenomenon
can not be expected in higher dimensions $n\geq3$. The reason is that the
geometry of the related state space has a rounded shape, which does not define
the original cone in $\ell^{p}\left(  n\right)  $. For example, in the case of
$\ell^{2}\left(  3\right)  $ we have $e_{2}=3^{-1/2}\left(  1,1,1\right)  $,
$S_{\varepsilon}=\ell^{2}\left(  3\right)  _{he}^{\ast}\cap\varepsilon
\operatorname{ball}\left(  \ell^{2}\left(  3\right)  _{h}\right)  $, that is,
\[
S_{1}=\left\{  s=\left(  s_{0},s_{1},s_{2}\right)  \in\mathbb{R}^{2}:s_{0}%
^{2}+s_{1}^{2}+s_{2}^{2}\leq1,s_{0}+s_{1}+s_{2}=3^{1/2}\right\}  =\left\{
e_{2}\right\}  ,
\]
and $S_{\varepsilon}=\left\{  s=\left(  s_{0},s_{1},s_{2}\right)
\in\mathbb{R}^{2}:s_{0}^{2}+s_{1}^{2}+s_{2}^{2}\leq\varepsilon^{2},s_{0}%
+s_{1}+s_{2}=3^{1/2}\right\}  $ is a disk centered at $e_{2}$ on the plane
$s_{0}+s_{1}+s_{2}=3^{1/2}$. But that disk $S_{\varepsilon}$ can either be
inscribed into the triangle $\Delta=\left\{  s_{0}+s_{1}+s_{2}=3^{1/2}%
,s_{j}\geq0,j=1,2,3\right\}  $ or cover it up, which means that the related
cone would never be the original cone of $\ell^{2}\left(  3\right)  $.
Nonetheless the projective positivity of $\ell^{2}\left(  3\right)  $ allows
to realize it as a function system on the disk $S_{\varepsilon}$ thanks to
Theorem \ref{thFS1}.

\section{Projective positivity in $L^{p}$-spaces\label{secLP}}

In this section we analyze the projective positivity in the unital $\ast
$-spaces $L_{\mu}^{p}\left(  \mathcal{T}\right)  $ for a locally compact,
$\sigma$-compact, Hausdorff topological space $\mathcal{T}$ equipped with a
finite Radon measure $\mu$. It is well known that $L_{\mu}^{p}\left(
\mathcal{T}\right)  _{h}$ is a Banach lattice with its natural positivity
given by the cone $L_{\mu}^{p}\left(  \mathcal{T}\right)  _{+}$ of $\mu
$-almost everywhere positive functions from $L_{\mu}^{p}\left(  \mathcal{T}%
\right)  $. How does the cone $L_{\mu}^{p}\left(  \mathcal{T}\right)  _{+}$
relate to the projective positivity in the unital $\ast$-space $L_{\mu}%
^{p}\left(  \mathcal{T}\right)  $, that is the main target of the present section.

\subsection{$\varepsilon$-positivity in the unital $\ast$-spaces $L_{\mu}%
^{p}\left(  \mathcal{T}\right)  $}

Let $\mu\in M\left(  \mathcal{T}\right)  _{+}$, $\left\Vert \mu\right\Vert
=\vee\left\{  \left\Vert \mu\right\Vert _{K}:K\subseteq\mathcal{T}\right\}
=\mu\left(  \mathcal{T}\right)  <\infty$, where $K\subseteq\mathcal{T}$ is
running over all compact subsets, $\left\Vert \mu\right\Vert _{K}=\left\Vert
\mu|C_{K}\left(  \mathcal{T}\right)  \right\Vert $ and $C_{K}\left(
\mathcal{T}\right)  \subseteq C_{c}\left(  \mathcal{T}\right)  $ is the $\ast
$-ideal of those continuous functions with supports inside of $K$. Note that
the related Banach spaces $L_{\mu}^{p}\left(  \mathcal{T}\right)  $, $1\leq
p\leq\infty$ are unital $\ast$-normed spaces with their units $e_{p}%
=\mu\left(  \mathcal{T}\right)  ^{-1/p}1$. For $p=\infty$ we come up with the
abelian von Neumann algebra $L_{\mu}^{\infty}\left(  \mathcal{T}\right)  $
with the unit $e_{\infty}=1$. For brevity we skip the notation $\mu$ and
consider the unital $\ast$-normed spaces $\left(  L^{p}\left(  \mathcal{T}%
\right)  ,e_{p}\right)  $, $1\leq p\leq\infty$. Certainly $\left(  L^{\infty
}\left(  \mathcal{T}\right)  ,e_{\infty}\right)  $ is an operator system,
which turns out to be a function system (see Proposition \ref{propAp1}). In
particular, $\operatorname{ball}L^{\infty}\left(  \mathcal{T}\right)  $ is
unital. But that is not the case for the rest $\operatorname{ball}L^{p}\left(
\mathcal{T}\right)  $, $1\leq p<\infty$ (see below Lemma \ref{lemLP0}).

We fix $1\leq p<\infty$ and $\varepsilon\geq1$, and put $S_{\varepsilon}%
^{p}=L^{p}\left(  \mathcal{T}\right)  _{he}^{\ast}\cap\varepsilon
\operatorname{ball}L^{q}\left(  \mathcal{T}\right)  $, where $\left(
p,q\right)  $ is the conjugate couple. Recall that $\left(  L^{p}\left(
\mathcal{T}\right)  ,L^{q}\left(  \mathcal{T}\right)  \right)  $ is a dual
$\ast$-pair with the duality $\left\langle g,f\right\rangle =\int gf$, $g\in
L^{p}\left(  \mathcal{T}\right)  $, $f\in L^{q}\left(  \mathcal{T}\right)  $,
where $\int$ indicates to the integral related to $\mu$. In particular,
$\left\langle e_{p},f\right\rangle =\mu\left(  \mathcal{T}\right)  ^{-1/p}\int
f$, $f\in L^{q}\left(  \mathcal{T}\right)  $ and $\left\langle e_{p}%
,e_{q}\right\rangle =\mu\left(  \mathcal{T}\right)  ^{-1/p}\int e_{q}=1$. Note
that
\[
S_{1}^{p}=\left\{  f\in L^{q}\left(  \mathcal{T}\right)  _{h}:\left\Vert
f\right\Vert _{q}\leq1,\int f=\mu\left(  \mathcal{T}\right)  ^{1/p}\right\}
\subseteq\partial\operatorname{ball}L^{q}\left(  \mathcal{T}\right)
\]
and $e_{q}\in S_{1}^{p}$ (see Subsection \ref{subsecUNS1}). The class of all
$\mu$-measurable subsets $E\subseteq\mathcal{T}$ is denoted by $\mathfrak{M}%
_{\mu}$.

\begin{lemma}
\label{lemLP0}The equality $S_{1}^{p}=\left\{  e_{q}\right\}  $ holds, that
is, $\operatorname{ball}L^{p}\left(  \mathcal{T}\right)  $ is not unital for
all $1\leq p<\infty$.
\end{lemma}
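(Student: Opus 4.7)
The plan is to verify that $e_q \in S_1^p$ by direct computation, and then show via the equality case of Hölder's inequality that $e_q$ is the only element of $S_1^p$. The failure of $\operatorname{ball}L^p(\mathcal{T})$ to be unital will follow immediately, because a unital ball must coincide with the polar of its state space, and the polar of a singleton is unbounded (hence cannot equal a norm ball).

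First I would check that $e_q = \mu(\mathcal{T})^{-1/q}\cdot 1$ actually sits in $S_1^p$: the computation $\|e_q\|_q = \mu(\mathcal{T})^{-1/q}\mu(\mathcal{T})^{1/q}=1$ together with $\int e_q\,d\mu = \mu(\mathcal{T})^{1-1/q}=\mu(\mathcal{T})^{1/p}$ handles this. Next, take any $f\in L^q(\mathcal{T})_h$ with $\|f\|_q\leq 1$ and $\int f\,d\mu=\mu(\mathcal{T})^{1/p}$. Applying Hölder's inequality with the pair $(f,1)$ gives
\[
\mu(\mathcal{T})^{1/p}=\int f\cdot 1\,d\mu\leq \int |f|\,d\mu\leq \|f\|_q\|1\|_p=\|f\|_q\mu(\mathcal{T})^{1/p},
\]
so $\|f\|_q\geq 1$, and combined with the assumed bound we obtain $\|f\|_q=1$ and also $\int f\,d\mu=\int|f|\,d\mu$, which forces $f\geq 0$ $\mu$-a.e.

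The core of the argument is the equality case in Hölder. For $1<p<\infty$, the chain above being an equality forces $|f|^q$ to be proportional to $1^p$ $\mu$-a.e., so $f$ is a nonnegative constant $c$ a.e.; the integral constraint then gives $c\,\mu(\mathcal{T})=\mu(\mathcal{T})^{1/p}$, hence $c=\mu(\mathcal{T})^{-1/q}$ and $f=e_q$. For the endpoint $p=1$ (so $q=\infty$), I would argue directly: since $f\leq\|f\|_\infty\leq 1$ a.e., the equality $\int f\,d\mu=\mu(\mathcal{T})$ forces $\int(1-f)\,d\mu=0$ with a nonnegative integrand, so $f=1=e_q$ $\mu$-a.e. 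In both cases we conclude $S_1^p=\{e_q\}$.

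Finally, to conclude that $\operatorname{ball}L^p(\mathcal{T})$ is not unital, I would invoke the definition: unitality would require $\operatorname{ball}L^p(\mathcal{T})=(S_1^p)^\circ$. But $(S_1^p)^\circ=\{g\in L^p(\mathcal{T}):|\langle g,e_q\rangle|\leq 1\}=\{g:\,|\int g\,d\mu|\leq\mu(\mathcal{T})^{1/p}\}$ is a slab, not bounded in the $L^p$-norm (e.g.\ any nonzero $g$ with $\int g\,d\mu=0$ can be scaled arbitrarily), contradicting the boundedness of $\operatorname{ball}L^p(\mathcal{T})$. The only subtlety I anticipate is treating $p=1$ separately so as not to misuse the Hölder equality characterization at the endpoint; this is routine.
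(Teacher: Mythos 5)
Your proof is correct, but it takes a genuinely different route from the paper. The paper first proves $S_{1}^{p}\subseteq L^{q}\left(\mathcal{T}\right)_{+}$ by an inner-regularity argument (testing against characteristic functions of compact subsets of $\left\{f\leq-n^{-1}\right\}$), then handles $p=1$ directly and, for $p>1$, exploits the fact that $\left\Vert f\right\Vert_{q}=\left\Vert e_{q}\right\Vert_{q}=1$ while $\left\Vert f+e_{q}\right\Vert_{q}=2$, so uniform convexity (or Clarkson's inequalities) of $L^{q}\left(\mathcal{T}\right)$ forces $f=e_{q}$. You instead run the single chain $\mu\left(\mathcal{T}\right)^{1/p}=\int f\leq\int\left\vert f\right\vert\leq\left\Vert f\right\Vert_{q}\mu\left(\mathcal{T}\right)^{1/p}\leq\mu\left(\mathcal{T}\right)^{1/p}$, extract positivity and $\left\Vert f\right\Vert_{q}=1$ for free from the collapse of the chain, and then use the equality case of H\"older (valid since $1<q<\infty$ when $p>1$) to get $\left\vert f\right\vert$ constant, with a separate elementary argument at the endpoint $p=1$; this is more self-contained and avoids both the compact-set regularity step and any appeal to uniform convexity, whereas the paper's positivity-via-compact-sets technique is reused later (Lemma \ref{lemLP2}), which is part of why it is set up that way. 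You also make explicit what the paper leaves implicit, namely why $S_{1}^{p}=\left\{e_{q}\right\}$ kills unitality: the polar of a singleton is the unbounded slab $\left\{g:\left\vert\int g\,d\mu\right\vert\leq\mu\left(\mathcal{T}\right)^{1/p}\right\}$, which can be neither equal nor equivalent to a norm ball; note only that this step needs a nonzero mean-zero function, i.e.\ it tacitly assumes $L^{p}\left(\mathcal{T}\right)$ is not one-dimensional, the same nondegeneracy the paper itself assumes silently.
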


\begin{proof}
First prove that $S_{1}^{p}\subseteq L^{q}\left(  \mathcal{T}\right)  _{+}$.
If $f\in S_{1}^{p}$ and $K\subseteq\left\{  f\leq-n^{-1}\right\}  $ is a
compact set, then $nf\left[  K\right]  +\left[  K\right]  \leq0$ and $n\int
f-n\int f\left[  \mathcal{T}\backslash K\right]  +\mu\left(  K\right)  \leq0$.
It follows that
\[
n\mu\left(  \mathcal{T}\right)  ^{1/p}+\mu\left(  K\right)  \leq n\int
f\left[  \mathcal{T}\backslash K\right]  \leq n\left\Vert f\right\Vert
_{q}\left\Vert \left[  \mathcal{T}\backslash K\right]  \right\Vert _{p}\leq
n\mu\left(  \mathcal{T}\backslash K\right)  ^{1/p}.
\]
But $\mu\left(  \mathcal{T}\right)  \geq\mu\left(  \mathcal{T}\backslash
K\right)  $ and $n\left(  \mu\left(  \mathcal{T}\right)  ^{1/p}-\mu\left(
\mathcal{T}\backslash K\right)  ^{1/p}\right)  +\mu\left(  K\right)  \leq0$,
therefore $\mu\left(  K\right)  =0$. Thus $\mu\left\{  f\leq0\right\}  =0$ or
$f\in L^{q}\left(  \mathcal{T}\right)  _{+}$.

Now assume that $p=1$ and $f\in S_{1}^{p}$. If $0<r<1$ and $K\subseteq\left\{
f\leq r\right\}  $ is a compact subset, then $f\left[  K\right]  \leq r\left[
K\right]  $, $\int f=\mu\left(  \mathcal{T}\right)  $ and $\mu\left(
\mathcal{T}\right)  -\int f\left[  \mathcal{T}\backslash K\right]  \leq
r\mu\left(  K\right)  $. It follows that
\[
\mu\left(  \mathcal{T}\right)  -r\mu\left(  K\right)  \leq\int f\left[
\mathcal{T}\backslash K\right]  \leq\left\Vert f\right\Vert _{\infty}%
\mu\left(  \mathcal{T}\backslash K\right)  \leq\mu\left(  \mathcal{T}%
\backslash K\right)  =\mu\left(  \mathcal{T}\right)  -\mu\left(  K\right)  ,
\]
that is, $\mu\left(  K\right)  =0$. Thus $f\geq1$ in $L^{\infty}\left(
\mathcal{T}\right)  _{h}$ and $\left\Vert f\right\Vert _{\infty}=1$. But
$\wedge\left\{  r\in\mathbb{R}:\mu\left\{  f>r\right\}  =0\right\}
=\left\Vert f\right\Vert _{\infty}=1$, which means that for every $r>1$,
$\mu\left\{  f>r\right\}  =0$. Hence $f=e_{\infty}$.

Finally, consider the case of $p>1$. Since $S_{1}^{p}\subseteq\partial
\operatorname{ball}L^{q}\left(  \mathcal{T}\right)  _{+}$ is a convex subset
and $e_{q}\in S_{1}^{p}$, it follows that $2^{-1}\left(  f+e_{q}\right)  \in
S_{1}^{p}$ whenever $f\in S_{1}^{p}$. In particular, $\left\Vert 2^{-1}\left(
f+e_{q}\right)  \right\Vert _{q}=\left\Vert f\right\Vert _{q}=\left\Vert
e_{q}\right\Vert _{q}=1$. Based on the geometry of $\operatorname{ball}%
L^{q}\left(  \mathcal{T}\right)  $ given by the property to be uniformly
convex set (see \cite{HHO}), we obtain that for every $\varepsilon>0$ there
corresponds $\delta>0$ such that $g,h\in\operatorname{ball}L^{q}\left(
\mathcal{T}\right)  $ with $\left\Vert g+h\right\Vert _{q}>2-\delta$ implies
that $\left\Vert g-h\right\Vert <\varepsilon$. Since $\left\Vert
f+e_{q}\right\Vert _{q}=2$, we obtain that $\left\Vert f-e_{q}\right\Vert
_{q}<\varepsilon$. Whence $f=e_{q}$. Optionally, one can use Clarkson's
inequalities in $L^{q}\left(  \mathcal{T}\right)  $.
\end{proof}

Thus $S_{1}^{p}$ can not be a state space of a reasonable unital cone in
$L^{p}\left(  \mathcal{T}\right)  $.

\subsection{Comparison of the cones}

By Theorem \ref{thFS1}, the unital $\ast$-normed space $L^{p}\left(
\mathcal{T}\right)  $ has the projective positivity given by the separated,
closed, unital cone $L^{p}\left(  \mathcal{T}\right)  _{\varepsilon}^{+}$
whose state space is $S_{\varepsilon}^{p}$ for $\varepsilon>1$. It turns out
that the original cone $L^{p}\left(  \mathcal{T}\right)  _{+}$ of the Banach
lattice $L^{p}\left(  \mathcal{T}\right)  _{h}$ and $L^{p}\left(
\mathcal{T}\right)  _{\varepsilon}^{+}$ for $1\leq p<\infty$ are totally
different cones, they are not comparable unless $\operatorname{supp}\left(
\mu\right)  $ is finite. That is the main result of the present section which
will be proven in several steps.

\begin{lemma}
\label{lemLP1}Let $\mathfrak{c}\subseteq L^{p}\left(  \mathcal{T}\right)
_{h}$ be a norm closed cone. Then $L^{p}\left(  \mathcal{T}\right)
_{+}\subseteq\mathfrak{c}$ iff $\left[  K\right]  \in\mathfrak{c}$ for every
compact set $K\subseteq\mathcal{T}$ with $\mu\left(  K\right)  >0$.
\end{lemma}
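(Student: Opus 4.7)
The forward implication is immediate: since $\mu$ is finite, the characteristic function $[K]$ of any compact $K$ lies in $L^{p}\left(\mathcal{T}\right)_{+}$, hence in $\mathfrak{c}$ whenever $L^{p}\left(\mathcal{T}\right)_{+}\subseteq\mathfrak{c}$. So the content of the lemma is in the reverse direction.

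For the reverse direction, my plan is to build up $L^{p}\left(\mathcal{T}\right)_{+}$ from the generators $[K]$ in three layers, using that $\mathfrak{c}$ is a norm closed cone and that $\mu$ is a Radon measure on a locally compact, $\sigma$-compact Hausdorff space. First, since $\mathfrak{c}$ is closed under positive linear combinations, it contains every function of the form $\sum_{i=1}^{n}c_{i}[K_{i}]$ with $c_{i}\geq0$ and $K_{i}\subseteq\mathcal{T}$ compact; the assumption $\mu(K_{i})>0$ is not needed here, for if $\mu(K_{i})=0$ then $[K_{i}]=0$ in $L^{p}\left(\mathcal{T}\right)$ and contributes nothing.

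Second, I would pass from compact sets to arbitrary $\mu$-measurable sets $E$ of positive measure. By the inner regularity of the finite Radon measure $\mu$, one may pick an increasing sequence $\left\{K_{n}\right\}$ of compact subsets of $E$ with $\mu\left(E\setminus K_{n}\right)\to0$; then
\[
\left\Vert [E]-[K_{n}]\right\Vert _{p}=\mu\left(E\setminus K_{n}\right)^{1/p}\longrightarrow 0,
\]
so $[E]\in\overline{\mathfrak{c}}=\mathfrak{c}$. Consequently every non-negative simple function $\sum c_{i}[E_{i}]$, with $E_{i}$ of finite $\mu$-measure, belongs to $\mathfrak{c}$.

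Third, for a general $f\in L^{p}\left(\mathcal{T}\right)_{+}$ I would invoke the standard density of non-negative simple functions in $L^{p}\left(\mathcal{T}\right)_{+}$: there exist non-negative simple functions $s_{n}=\sum_{i}c_{i,n}[E_{i,n}]$ with $c_{i,n}\geq0$, $\mu(E_{i,n})<\infty$, and $\left\Vert f-s_{n}\right\Vert _{p}\to0$ (for example, the truncation-dyadic approximation $s_{n}=\sum_{k=1}^{n2^{n}}2^{-n}[f>k2^{-n}]$ works). By the previous step each $s_{n}\in\mathfrak{c}$, and norm closedness of $\mathfrak{c}$ delivers $f\in\mathfrak{c}$. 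The only subtle point is to guarantee that every level set $\left\{f>\alpha\right\}$ has finite $\mu$-measure (so that it can be approximated by compacts inside it), but this is automatic since $\mu$ itself is finite. Thus there is no serious obstacle: the whole argument is a routine deployment of the Radon regularity plus closedness of $\mathfrak{c}$.
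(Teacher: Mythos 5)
Your proposal is correct and follows essentially the same route as the paper's own proof: the forward direction is trivial, and the reverse direction passes from compacts to arbitrary measurable sets by inner regularity and norm closedness, then from nonnegative simple (step) functions to general $f\in L^{p}\left(\mathcal{T}\right)_{+}$ by monotone dyadic approximation together with dominated convergence in $L^{p}$. Both arguments use in the same way that $\mathfrak{c}$, being a (convex) cone, contains positive linear combinations of the $\left[K\right]$'s, so there is nothing to add.
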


\begin{proof}
Since $\left[  K\right]  \in L^{p}\left(  \mathcal{T}\right)  _{+}$ for every
$K$, one side implication is immediate. Conversely, suppose that $\left\{
\left[  K\right]  :K\subseteq\mathcal{T}\right\}  \subseteq\mathfrak{c}$. Pick
$E\in\mathfrak{M}_{\mu}$. Since $\mu\left(  E\right)  =\vee\left\{  \mu\left(
K\right)  :K\subseteq\mathcal{T}\right\}  $, it follows that $\mu\left(
K_{n}\right)  \uparrow\mu\left(  E\right)  $ for a certain sequence $\left\{
K_{n}\right\}  $ of compact subsets in $E$. But $\left\Vert \left[  E\right]
-\left[  K\right]  \right\Vert _{p}=\left(  \int\left[  E\backslash
K_{n}\right]  \right)  ^{1/p}=\mu\left(  E\backslash K_{n}\right)
^{1/p}\rightarrow0$, therefore $\left[  E\right]  \in\mathfrak{c}$ being a
closed cone. In particular, all positive $\mu$-step functions belong to
$\mathfrak{c}$. If $f\in L^{p}\left(  \mathcal{T}\right)  _{+}$ then
$f_{n}\uparrow f$ for a sequence $\left\{  f_{n}\right\}  \subseteq
L^{p}\left(  \mathcal{T}\right)  _{+}$ of step functions. Then $\left(
f-f_{n}\right)  \downarrow0$ and $\left(  f-f_{n}\right)  ^{p}\leq f^{p}%
-f_{n}^{p}\leq f^{p}$ with $\int f^{p}<\infty$. By Lebesgue's convergence
theorem, $\int\left(  f-f_{n}\right)  ^{p}\downarrow0$. Taking into account
that $\left\{  f_{n}\right\}  \subseteq\mathfrak{c}$, we conclude that
$f\in\mathfrak{c}$ too.
\end{proof}

\begin{lemma}
\label{lemLP2}The inclusion $L^{p}\left(  \mathcal{T}\right)  _{+}\subseteq
L^{p}\left(  \mathcal{T}\right)  _{\varepsilon}^{+}$ for some $\varepsilon>1$
holds iff $S_{\varepsilon}^{p}\subseteq L^{q}\left(  \mathcal{T}\right)  _{+}$.
\end{lemma}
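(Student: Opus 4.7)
The statement is a duality-type tautology once one unpacks the definition $L^{p}\left(\mathcal{T}\right)_{\varepsilon}^{+}=\left\{v\in L^{p}\left(\mathcal{T}\right)_{h}:\left\langle v,S_{\varepsilon}^{p}\right\rangle \geq0\right\}$ and the fact that the pairing is $\left\langle g,f\right\rangle =\int gf$. My plan is to prove both implications directly and to use inner regularity of $\mu$ together with the test-function Lemma \ref{lemLP1} for the nontrivial direction.

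For the easy implication, assume $S_{\varepsilon}^{p}\subseteq L^{q}\left(\mathcal{T}\right)_{+}$. Pick $g\in L^{p}\left(\mathcal{T}\right)_{+}$ and any $f\in S_{\varepsilon}^{p}$. By assumption $f\geq0$ a.e., so $\left\langle g,f\right\rangle =\int gf\geq0$. Since $f\in S_{\varepsilon}^{p}$ was arbitrary, this means $g\in\mathfrak{c}_{\varepsilon}=L^{p}\left(\mathcal{T}\right)_{\varepsilon}^{+}$, and hence $L^{p}\left(\mathcal{T}\right)_{+}\subseteq L^{p}\left(\mathcal{T}\right)_{\varepsilon}^{+}$.

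For the converse, assume $L^{p}\left(\mathcal{T}\right)_{+}\subseteq L^{p}\left(\mathcal{T}\right)_{\varepsilon}^{+}$ and take $f\in S_{\varepsilon}^{p}\subseteq L^{q}\left(\mathcal{T}\right)_{h}$. By Lemma \ref{lemLP1} the cone $L^{p}\left(\mathcal{T}\right)_{+}$ is generated (as a closed cone) by characteristic functions $\left[K\right]$ of compact subsets $K\subseteq\mathcal{T}$, so each such $\left[K\right]$ lies in $L^{p}\left(\mathcal{T}\right)_{\varepsilon}^{+}$, whence $\int_{K}f=\left\langle \left[K\right],f\right\rangle \geq0$ for every compact $K\subseteq\mathcal{T}$. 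I now want to deduce $f\geq0$ a.e. To this end, consider the measurable sets $E_{n}=\left\{t\in\mathcal{T}:f\left(t\right)\leq-1/n\right\}$. If $\mu\left(E_{n}\right)>0$ for some $n$, then by inner regularity of the Radon measure $\mu$ there exists a compact $K\subseteq E_{n}$ with $\mu\left(K\right)>0$, but then $\int_{K}f\leq-\mu\left(K\right)/n<0$, contradicting the previous inequality. Hence $\mu\left(E_{n}\right)=0$ for all $n$, i.e. $f\geq0$ a.e., so $f\in L^{q}\left(\mathcal{T}\right)_{+}$.

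There is really no obstacle here — the only mild subtlety is in the converse, where one must pass from the pointwise sign-test against characteristic functions of compact sets to a.e. non-negativity, and this is exactly where inner regularity of $\mu$ (which is already being used elsewhere in the section via Lemma \ref{lemLP1}) supplies the missing step. No use is made of $\varepsilon>1$ here: the equivalence would syntactically be valid for any $\varepsilon$, although the statement is of interest precisely in the regime where $L^{p}\left(\mathcal{T}\right)_{\varepsilon}^{+}$ is a genuine unital cone supplied by Theorem \ref{thFS1}.
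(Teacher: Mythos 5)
Your proof is correct, and one half of it takes a genuinely more elementary route than the paper. The direction ``cone inclusion $\Rightarrow$ $S_{\varepsilon}^{p}\subseteq L^{q}\left(\mathcal{T}\right)_{+}$'' is essentially the paper's argument: test $f$ against $\left[K\right]$ for compact $K\subseteq\left\{f\leq-1/n\right\}$ and use inner regularity of the Radon measure $\mu$ to pass from $\mu\left(K\right)=0$ to $\mu\left\{f\leq-1/n\right\}=0$ (the paper does this by pointing back to the computation in Lemma \ref{lemLP0}; you have merely made the regularity step explicit). One cosmetic slip there: you do not need Lemma \ref{lemLP1} to see that $\left[K\right]\in L^{p}\left(\mathcal{T}\right)_{\varepsilon}^{+}$ --- the indicator is trivially in $L^{p}\left(\mathcal{T}\right)_{+}$ and the hypothesis does the rest; Lemma \ref{lemLP1} says something else (that indicators suffice to force the inclusion of the whole cone). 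In the other direction your argument diverges from the paper's: the paper shows $\left\langle\left[K\right],S_{\varepsilon}^{p}\right\rangle\geq0$, concludes $\left[K\right]\in\mathfrak{c}_{\varepsilon}$, and then invokes the norm-closedness of $\mathfrak{c}_{\varepsilon}$ together with Lemma \ref{lemLP1} to capture all of $L^{p}\left(\mathcal{T}\right)_{+}$; you instead observe directly that for $g\in L^{p}\left(\mathcal{T}\right)_{+}$ and $f\in S_{\varepsilon}^{p}\subseteq L^{q}\left(\mathcal{T}\right)_{+}$ the product $gf$ is a.e.\ nonnegative and integrable by H\"older, so $\left\langle g,f\right\rangle\geq0$ and $g\in\mathfrak{c}_{\varepsilon}$ by the very definition of the $\varepsilon$-cone. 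Your version is shorter, avoids Lemma \ref{lemLP1} and the closedness of $\mathfrak{c}_{\varepsilon}$ in that direction, and makes transparent that the equivalence is a duality tautology; the paper's version buys uniformity with the surrounding lemmas, which all run through the indicator-function test. Your closing remark that $\varepsilon>1$ plays no role in the equivalence itself (only in making $\mathfrak{c}_{\varepsilon}$ a unital cone via Theorem \ref{thFS1}) is also accurate.
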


\begin{proof}
Suppose that $L^{p}\left(  \mathcal{T}\right)  _{+}\subseteq L^{p}\left(
\mathcal{T}\right)  _{\varepsilon}^{+}$. If $f\in S_{\varepsilon}^{p}$ then
$\left\langle L^{p}\left(  \mathcal{T}\right)  _{\varepsilon}^{+}%
,f\right\rangle \geq0$ and therefore $\left\langle \left[  K\right]
,f\right\rangle \geq0$. As in the proof of Lemma \ref{lemLP0}, for a compact
set $K\subseteq\left\{  f\leq-n^{-1}\right\}  $ we have $nf\left[  K\right]
+\left[  K\right]  \leq0$ and $n\left\langle \left[  K\right]  ,f\right\rangle
+\mu\left(  K\right)  \leq0$, which means that $\mu\left(  K\right)  =0$. It
follows that $f\in L^{q}\left(  \mathcal{T}\right)  _{+}$.

Conversely, suppose that $S_{\varepsilon}^{p}\subseteq L^{q}\left(
\mathcal{T}\right)  _{+}$. Then $\left\langle \left[  K\right]
,S_{\varepsilon}^{p}\right\rangle =\int_{K}S_{\varepsilon}^{p}\geq0$ for every
compact set $K$. By Theorem \ref{thFS1}, we have $S_{\varepsilon}^{p}=S\left(
L^{p}\left(  \mathcal{T}\right)  _{\varepsilon}^{+}\right)  $, which means
that $\left[  K\right]  \in L^{p}\left(  \mathcal{T}\right)  _{\varepsilon
}^{+}$. But $L^{p}\left(  \mathcal{T}\right)  _{\varepsilon}^{+}$ is a closed
cone and $\left\{  \left[  K\right]  :K\subseteq\mathcal{T}\right\}  \subseteq
L^{p}\left(  \mathcal{T}\right)  _{\varepsilon}^{+}$. By Lemma \ref{lemLP1},
$L^{p}\left(  \mathcal{T}\right)  _{+}\subseteq L^{p}\left(  \mathcal{T}%
\right)  _{\varepsilon}^{+}$ holds.
\end{proof}

Now consider the sets $E\in\mathfrak{M}_{\mu}$ with $\mu\left(  E\right)  >0$
and put $r_{\mu}=\wedge\left\{  \mu\left(  E\right)  :E\in\mathfrak{M}_{\mu
},\mu\left(  E\right)  >0\right\}  $.

\begin{lemma}
\label{lemLP3}If $L^{p}\left(  \mathcal{T}\right)  _{+}\subseteq L^{p}\left(
\mathcal{T}\right)  _{\varepsilon}^{+}$ for some $\varepsilon>1$, then
$r_{\mu}>0$.
\end{lemma}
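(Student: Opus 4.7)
The plan is to prove the contrapositive: assume $r_{\mu}=0$ and construct, for any $\varepsilon>1$, an element $f\in S_{\varepsilon}^{p}$ that fails to be in $L^{q}\left(  \mathcal{T}\right)_{+}$. By Lemma \ref{lemLP2}, this will force $L^{p}\left(  \mathcal{T}\right)_{+}\nsubseteq L^{p}\left(  \mathcal{T}\right)_{\varepsilon}^{+}$, contradicting the hypothesis.

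First, I would reduce to building a suitable simple function. Since $r_{\mu}=0$, for every $\delta>0$ there exists $E\in\mathfrak{M}_{\mu}$ with $0<m:=\mu\left(  E\right)  <\delta$. For each such $E$, consider the two-step function
\[
f_{E}=-\left[  E\right]  +\beta\left[  \mathcal{T}\backslash E\right]  ,\qquad \beta=\dfrac{\mu\left(  \mathcal{T}\right)  ^{1/p}+m}{\mu\left(  \mathcal{T}\right)  -m},
\]
where $\beta$ is chosen precisely so that the integral condition in $S_{\varepsilon}^{p}$ is met. A direct computation gives $\int f_{E}\,d\mu=-m+\beta\left(  \mu\left(  \mathcal{T}\right)  -m\right)  =\mu\left(  \mathcal{T}\right)  ^{1/p}$, so $f_{E}\in L^{q}\left(  \mathcal{T}\right)_{h}$ satisfies the hyperplane condition of Subsection \ref{subsecUNS1}.

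The key step is to control the $L^{q}$-norm as $m\downarrow0$. One has
\[
\left\Vert f_{E}\right\Vert _{q}^{q}=m+\beta^{q}\bigl(\mu\left(  \mathcal{T}\right)  -m\bigr).
\]
As $m\to0$, $\beta\to\mu\left(  \mathcal{T}\right)  ^{1/p-1}=\mu\left(  \mathcal{T}\right)  ^{-1/q}$, so $\beta^{q}\bigl(\mu\left(  \mathcal{T}\right)  -m\bigr)\to\mu\left(  \mathcal{T}\right)  ^{-1}\cdot\mu\left(  \mathcal{T}\right)  =1$, using the conjugacy relation $1/p+1/q=1$. Hence $\left\Vert f_{E}\right\Vert_{q}^{q}\to1$ as $m\downarrow0$, and since $\varepsilon>1$, we have $\varepsilon^{q}>1$, so for all sufficiently small $m$ we get $\left\Vert f_{E}\right\Vert_{q}\leq\varepsilon$. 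Consequently $f_{E}\in S_{\varepsilon}^{p}$.

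Finally, by construction $f_{E}=-1$ on the set $E$ of positive measure, so $f_{E}\notin L^{q}\left(  \mathcal{T}\right)_{+}$. This yields $S_{\varepsilon}^{p}\nsubseteq L^{q}\left(  \mathcal{T}\right)_{+}$, and Lemma \ref{lemLP2} delivers the desired contradiction with $L^{p}\left(  \mathcal{T}\right)_{+}\subseteq L^{p}\left(  \mathcal{T}\right)_{\varepsilon}^{+}$, proving $r_{\mu}>0$. The only mildly subtle point is verifying the exponent arithmetic $q/p-q+1=0$ cleanly so that the limit of $\beta^{q}\bigl(\mu\left(  \mathcal{T}\right)  -m\bigr)$ is exactly $1$ and the strictness $\varepsilon>1$ is actually used; everything else is elementary once the right test function is written down.
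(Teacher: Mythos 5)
Your construction is essentially the paper's own: a two-valued step function, negative on a set of small measure and with the positive value on the complement tuned so that $\int f\,d\mu=\mu\left(\mathcal{T}\right)^{1/p}$, followed by an appeal to Lemma \ref{lemLP2}. (The paper uses $f=b\left[\mathcal{T}\backslash K\right]-a\left[K\right]$ with $0<a<1$ and compact $K$, while you take the negative value to be $-1$ and a general measurable $E$; your asymptotic estimate of the norm as $m\downarrow0$ replaces the paper's explicit inequalities such as $b^{q-1}\mu\left(\mathcal{T}\right)^{1/p}+\left(b^{q-1}+a^{q-1}\right)a\mu\left(K\right)\leq\varepsilon^{q}$, which is a legitimate simplification and uses $\varepsilon>1$ in exactly the same way.) The one point you must repair is the case $p=1$: the lemma is stated for the fixed $1\leq p<\infty$ of that section, and there $q=\infty$, so the identity $\left\Vert f_{E}\right\Vert_{q}^{q}=m+\beta^{q}\left(\mu\left(\mathcal{T}\right)-m\right)$ and the exponent arithmetic you invoke are meaningless. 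The fix is immediate and in the same spirit: for $p=1$ one has $\beta=\left(\mu\left(\mathcal{T}\right)+m\right)/\left(\mu\left(\mathcal{T}\right)-m\right)\rightarrow1$ and $\left\Vert f_{E}\right\Vert_{\infty}=\beta\vee1\rightarrow1<\varepsilon$, so $f_{E}\in S_{\varepsilon}^{1}\backslash L^{\infty}\left(\mathcal{T}\right)_{+}$ for small $m$; the paper likewise treats $p=1$ by a separate computation. With that case added, your argument is complete and correct.
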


\begin{proof}
Suppose that the inclusion $L^{p}\left(  \mathcal{T}\right)  _{+}\subseteq
L^{p}\left(  \mathcal{T}\right)  _{\varepsilon}^{+}$ holds for some
$\varepsilon>1$ whereas $r_{\mu}=0$. Thus one can choose the compact sets $K$
with sufficiently small $\mu\left(  K\right)  $. First consider the case of
$p=1$. Pick a small real $0<a<1$ and a compact $K$ with $0<\mu\left(
K\right)  /\mu\left(  \mathcal{T}\right)  \leq\left(  \varepsilon-1\right)
/\left(  \varepsilon+a\right)  $. Since the rational function $y=\dfrac
{x-1}{x+a}$, $1\leq x\leq\varepsilon$ is an increasing continuous function, it
follows that $\mu\left(  K\right)  /\mu\left(  \mathcal{T}\right)  =\left(
b-1\right)  /\left(  b+a\right)  $ for a certain $1<b\leq\varepsilon$. Then
$f=b\left[  \mathcal{T}\backslash K\right]  -a\left[  K\right]  \in L^{\infty
}\left(  \mathcal{T}\right)  _{h}\backslash L^{\infty}\left(  \mathcal{T}%
\right)  _{+}$ with
\begin{align*}
\left\langle e_{1},f\right\rangle  &  =\mu\left(  \mathcal{T}\right)
^{-1}\int f=\mu\left(  \mathcal{T}\right)  ^{-1}\left(  b\mu\left(
\mathcal{T}\backslash K\right)  -a\mu\left(  K\right)  \right)  =1,\text{
and}\\
\left\Vert f\right\Vert _{\infty}  &  =b\vee a\leq\varepsilon.
\end{align*}
It means that $f\in S_{\varepsilon}^{1}\backslash L^{\infty}\left(
\mathcal{T}\right)  _{+}$, which contradicts to Lemma \ref{lemLP2}.

Now assume that $1<p<\infty$. As above put $0<a<1$ and $b=\dfrac{a\mu\left(
K\right)  +\mu\left(  \mathcal{T}\right)  ^{1/p}}{\mu\left(  \mathcal{T}%
\backslash K\right)  }$. By choosing small $\mu\left(  K\right)  >0$, we see
that%
\[
b^{q-1}\mu\left(  \mathcal{T}\right)  ^{1/p}=\left(  \dfrac{a\mu\left(
K\right)  +\mu\left(  \mathcal{T}\right)  ^{1/p}}{\mu\left(  \mathcal{T}%
\backslash K\right)  }\right)  ^{q-1}\mu\left(  \mathcal{T}\right)  ^{1/p}%
\sim\left(  \dfrac{\mu\left(  \mathcal{T}\right)  ^{1/p}}{\mu\left(
\mathcal{T}\right)  }\right)  ^{q-1}\mu\left(  \mathcal{T}\right)
^{1/p}=1<\varepsilon^{q},
\]
that is, $b^{q-1}\mu\left(  \mathcal{T}\right)  ^{1/p}<\varepsilon^{q}$ for
some compact $K$ with small $\mu\left(  K\right)  >0$. Further, by squeezing
$a$, we can also assume that $b^{q-1}\mu\left(  \mathcal{T}\right)
^{1/p}+\left(  b^{q-1}+a^{q-1}\right)  a\mu\left(  K\right)  \leq
\varepsilon^{q}$. If $f=b\left[  \mathcal{T}\backslash K\right]  -a\left[
K\right]  $, then
\begin{align*}
\left\langle e_{p},f\right\rangle  &  =\mu\left(  \mathcal{T}\right)
^{-1/p}\int f=\mu\left(  \mathcal{T}\right)  ^{-1/p}\left(  b\mu\left(
\mathcal{T}\backslash K\right)  -a\mu\left(  K\right)  \right)  =1\text{,
and}\\
\left\Vert f\right\Vert _{q}^{q}  &  =\int\left\vert f\right\vert ^{q}%
=b^{q}\mu\left(  \mathcal{T}\backslash K\right)  +a^{q}\mu\left(  K\right)
=b^{q-1}\left(  a\mu\left(  K\right)  +\mu\left(  \mathcal{T}\right)
^{1/p}\right)  +a^{q}\mu\left(  K\right) \\
&  =b^{q-1}\mu\left(  \mathcal{T}\right)  ^{1/p}+\left(  b^{q-1}%
+a^{q-1}\right)  a\mu\left(  K\right)  \leq\varepsilon^{q},
\end{align*}
which means that $f\in L^{p}\left(  \mathcal{T}\right)  _{he}^{\ast}%
\cap\varepsilon\operatorname{ball}L^{q}\left(  \mathcal{T}\right)
=S_{\varepsilon}^{p}$. But $f\notin L^{q}\left(  \mathcal{T}\right)  _{+}$,
which again contradicts to Lemma \ref{lemLP2}.
\end{proof}

Recall that by an atom in $\mathcal{T}$ for $\mu$ we mean a measurable subset
$A\subseteq\mathcal{T}$, $\mu\left(  A\right)  >0$ such that for every
measurable subset $B\subseteq A$ with $\mu\left(  B\right)  <\mu\left(
A\right)  $ we have $\mu\left(  B\right)  =0$. A locally summable sequence
$\left\{  c_{t}:t\in S\right\}  $ in $\mathbb{R}_{+}$ defines an atomic
measure $\lambda=\sum_{t}c_{t}\delta_{t}\in M\left(  \mathcal{T}\right)  _{+}$
such that $\operatorname{supp}\left(  \lambda\right)  $ is the closure of its
atoms $S=\left\{  t\in\mathcal{T}:c_{t}>0\right\}  $. In this case,
$\lambda=\left[  S\right]  \lambda$ and $\left\Vert \lambda\right\Vert
=\sum_{t\in S}c_{t}$. The closed subspace $I^{\delta}\left(  \mathcal{T}%
\right)  _{h}$ in $M\left(  \mathcal{T}\right)  _{h}$ generated by $\left\{
\delta_{t}:t\in\mathcal{T}\right\}  $ is a closed ideal of the complete
lattice $M\left(  \mathcal{T}\right)  _{h}$. It turns out that $I^{\delta
}\left(  \mathcal{T}\right)  $ is the set of all real atomic charges on
$\mathcal{T}$ exactly. Moreover, we have the decomposition $M\left(
\mathcal{T}\right)  _{h}=I^{\delta}\left(  \mathcal{T}\right)  _{h}\oplus
I^{\delta}\left(  \mathcal{T}\right)  _{h}^{\perp}$ of the closed ideals such
that the ideal complement $I^{\delta}\left(  \mathcal{T}\right)  _{h}^{\perp}$
consists of all real scattered measures on $\mathcal{T}$. In particular, there
is a unique decomposition $\mu=\lambda+\varphi$ with a positive atomic
$\lambda$ and a positive scattered $\varphi$ such that $\left\Vert
\mu\right\Vert =\left\Vert \lambda\right\Vert +\left\Vert \varphi\right\Vert
$. In this case, there are disjoint $S,T\in\mathfrak{M}_{\mu}$ such that
$\left[  S\right]  \lambda=\lambda$ and $\left[  T\right]  \varphi=\varphi$
(see \cite[5.5]{BourInt} for the details)

\begin{lemma}
\label{lemLP4}A bounded, positive, Radon measure $\mu\in M\left(
\mathcal{T}\right)  $ with $r_{\mu}>0$ is an atomic measure with the finite
support $\operatorname{supp}\left(  \mu\right)  $.
\end{lemma}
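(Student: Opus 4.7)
The plan is to exploit the decomposition $\mu=\lambda+\varphi$ recalled immediately before the statement, where $\lambda$ is a positive atomic measure supported on some $S\in\mathfrak{M}_{\mu}$ and $\varphi$ is a positive scattered measure supported on a disjoint $T\in\mathfrak{M}_{\mu}$. I would prove, independently, that the hypothesis $r_{\mu}>0$ forces $\varphi=0$ and that the support of $\lambda$ is finite; combining these two facts gives the conclusion.

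For the scattered part I plan to argue by contradiction, supposing $\left\Vert \varphi\right\Vert >0$. Since by construction $\varphi$ has no measurable atoms, its restriction to $T$ is a non-zero, finite, non-atomic positive measure, and the classical Sierpinski--Lyapunov theorem ensures that the range $\left\{\varphi\left(F\right):F\in\mathfrak{M}_{\mu},\,F\subseteq T\right\}$ fills the entire interval $\left[0,\left\Vert \varphi\right\Vert \right]$. In particular one can pick $F\subseteq T$ with $0<\varphi\left(F\right)<r_{\mu}$. The disjointness of $S$ and $T$ gives $\lambda\left(F\right)=0$, so $\mu\left(F\right)=\varphi\left(F\right)\in\left(0,r_{\mu}\right)$, contradicting the very definition of $r_{\mu}$.

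For the atomic part I would write $\lambda=\sum_{t\in S_{0}}c_{t}\delta_{t}$ with $S_{0}=\left\{t\in\mathcal{T}:c_{t}>0\right\}$. The disjointness of $S$ and $T$ again yields $\mu\left(\left\{t\right\}\right)=c_{t}$ for every $t\in S_{0}$, so the infimum defining $r_{\mu}$ forces $c_{t}\geq r_{\mu}$. A one-line counting bound
\[
\left\vert S_{0}\right\vert r_{\mu}\leq\sum_{t\in S_{0}}c_{t}=\left\Vert \lambda\right\Vert \leq\left\Vert \mu\right\Vert <\infty
\]
then makes $S_{0}$ finite. Combined with $\varphi=0$, this shows that $\mu$ is atomic with finite support $S_{0}=\operatorname{supp}\left(\mu\right)$.

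The only non-routine ingredient is the Sierpinski--Lyapunov intermediate-value theorem, invoked to produce a measurable set of $\varphi$-measure strictly between $0$ and $r_{\mu}$; the fact that a scattered Radon measure in Bourbaki's sense really is non-atomic in the measure-theoretic sense needed for that theorem is the only place where one must lean on the structure theory recalled above. Once that step is in hand, the rest is a direct consequence of $\sigma$-additivity and the finiteness of $\left\Vert \mu\right\Vert$.
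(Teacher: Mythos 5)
Your proposal is correct in substance and follows the same skeleton as the paper's proof: both start from the Bourbaki decomposition $\mu=\lambda+\varphi$ into an atomic part and a scattered part carried by disjoint sets $S$ and $T$, and both settle the atomic side exactly as you do, via $\mu\left(  \left\{  t\right\}  \right)  =c_{t}\geq r_{\mu}$ and $\sum_{t}c_{t}=\left\Vert \lambda\right\Vert <\infty$. The genuine difference is how the scattered part is killed. You invoke the Sierpi\'{n}ski--Lyapunov intermediate-value theorem for non-atomic measures to produce $F\subseteq T$ with $0<\varphi\left(  F\right)  =\mu\left(  F\right)  <r_{\mu}$, contradicting the definition of $r_{\mu}$; the paper argues more elementarily: it notes that any $A$ with $r_{\mu}\leq\mu\left(  A\right)  <2r_{\mu}$ is automatically an atom, shows $r_{\varphi}\geq r_{\mu}>0$, picks $A$ with $r_{\varphi}\leq\varphi\left(  A\right)  <2r_{\varphi}$ (hence an atom of $\varphi$), and then uses the fact that an atom of a Radon measure is concentrated at a single point, cited as \cite[Proposition 11]{Nemesh}, to conclude $\varphi\left(  A\right)  =\varphi\left(  \left\{  t\right\}  \right)  =0$, a contradiction. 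Be aware that your route needs exactly the same external input at the step you flag: ``scattered'' in the decomposition only means orthogonal to the Dirac measures, i.e.\ no point masses, and the passage from ``no point masses'' to ``no measurable atoms'' is precisely the statement that atoms of Radon measures sit on points --- the Nemesh-type fact the paper cites; it does not follow from the decomposition itself, so a citation (or a short inner-regularity argument) must be supplied there. Once that is in place your proof is complete, and in fact Sierpi\'{n}ski's full theorem is more than you need, since iterated splitting of a non-atomic set already yields sets of arbitrarily small positive measure; so your version trades the paper's explicit atom construction inside $\varphi$ for a classical but heavier measure-theoretic theorem.
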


\begin{proof}
First note that every $A\in\mathfrak{M}_{\mu}$ with $r_{\mu}\leq\mu\left(
A\right)  <2r_{\mu}$ is an atom for $\mu$. Indeed, if $B\in\mathfrak{M}_{\mu}$
with $B\subseteq A$, $\mu\left(  B\right)  <\mu\left(  A\right)  $, we have
$\mu\left(  B\right)  \geq r_{\mu}$ whenever $\mu\left(  B\right)  >0$. But
$\mu\left(  A/B\right)  =\mu\left(  A\right)  -\mu\left(  B\right)  >0$ too,
therefore $\mu\left(  A/B\right)  \geq r_{\mu}$. It follows that $\mu\left(
A\right)  =\mu\left(  B\right)  +\mu\left(  A/B\right)  \geq2r_{\mu}$, a
contradiction. Conversely, for every atom $A$ for $\mu$, we obviously have
$\mu\left(  A\right)  \geq r_{\mu}$ and $\mu\left(  A\right)  =\mu\left(
\left\{  t\right\}  \right)  $ for some $t\in A$ thanks to \cite[Proposition
11]{Nemesh}. In particular, the atomic part of $\mu$ in the expansion
$I^{\delta}\left(  \mathcal{T}\right)  _{h}\oplus I^{\delta}\left(
\mathcal{T}\right)  _{h}^{\perp}$ is not trivial. Put $\mu=\lambda+\varphi$
with an atomic $\lambda$ and a scattered $\varphi$ supported on disjoint $S$
and $T$, respectively. For every $t\in S$ we have $\mu\left(  \left\{
t\right\}  \right)  =\lambda\left(  \left\{  t\right\}  \right)  =c_{t}>0$. In
particular, $\wedge\left\{  c_{t}:t\in S\right\}  \geq r_{\mu}>0$. But
$\sum_{t\in S}c_{t}<\infty$, therefore $S$ is a finite set.

If $\varphi\neq0$ then $\varphi\left(  E\right)  >0$ for some $E\in
\mathfrak{M}_{\mu}$. Since $\left[  T\right]  \varphi=\varphi$, we can assume
that $E\subseteq T$ and $\mu\left(  E\right)  =\lambda\left(  E\right)
+\varphi\left(  E\right)  =\varphi\left(  E\right)  \geq r_{\mu}$. In
particular, $r_{\varphi}=\wedge\left\{  \varphi\left(  E\right)
:E\in\mathfrak{M}_{\varphi},\varphi\left(  E\right)  >0\right\}  \geq r_{\mu
}>0$. If $r_{\varphi}\leq\varphi\left(  A\right)  <2r_{\varphi}$, then $A$ is
an atom for $\varphi$. As above $\varphi\left(  A\right)  =\varphi\left(
\left\{  t\right\}  \right)  $, and taking into account that $\varphi$ is
scattered, we deduce that $\varphi\left(  \left\{  t\right\}  \right)  =0$.
Thus $r_{\varphi}=0$, a contradiction. Whence $\mu=\lambda\in I^{\delta
}\left(  \mathcal{T}\right)  _{h}$ and it has the finite support $S$.
\end{proof}

Now we can prove the following key result. To be punctual we say that some
cones $\mathfrak{c}$ and $\mathfrak{k}$ in a vector space are
\textit{comparable} if $\mathfrak{c\subseteq k}$ or $\mathfrak{k\subseteq c}$.

\begin{theorem}
\label{thmainComm}The cones $L^{p}\left(  \mathcal{T}\right)  _{+}$ and
$L^{p}\left(  \mathcal{T}\right)  _{\varepsilon}^{+}$ are comparable for some
$\varepsilon>1$ if and only if $\operatorname{supp}\left(  \mu\right)  $ is finite.
\end{theorem}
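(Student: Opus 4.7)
The proof naturally splits into two implications, with the ``only if'' direction further splitting into two sub-cases according to which containment holds; Lemma \ref{lemLP3} handles only one of them directly, and the opposite containment requires a separate ad~hoc construction.

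\emph{Only if direction.} Assume the cones are comparable for some $\varepsilon>1$. If $L^{p}(\mathcal{T})_{+}\subseteq L^{p}(\mathcal{T})_{\varepsilon}^{+}$, then Lemma \ref{lemLP3} gives $r_{\mu}>0$ and Lemma \ref{lemLP4} concludes that $\operatorname{supp}(\mu)$ is finite. For the opposite containment $L^{p}(\mathcal{T})_{\varepsilon}^{+}\subseteq L^{p}(\mathcal{T})_{+}$ I argue by contradiction: assuming $r_{\mu}=0$, I produce some $f\in L^{p}(\mathcal{T})_{\varepsilon}^{+}\setminus L^{p}(\mathcal{T})_{+}$. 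Following the template of Lemma \ref{lemLP3}, I set $f=b[\mathcal{T}\setminus K]-a[K]$ for constants $a,b>0$ and a compact $K$ with arbitrarily small $\mu(K)>0$, available precisely because $r_{\mu}=0$. By H\"{o}lder's inequality, every $g\in S_{\varepsilon}^{p}$ satisfies $\int_{K}g\leq\|g\|_{q}\,\mu(K)^{1/p}\leq\varepsilon\mu(K)^{1/p}$, and consequently
\[
\langle f,g\rangle = b\mu(\mathcal{T})^{1/p}-(a+b)\textstyle\int_{K}g \geq b\mu(\mathcal{T})^{1/p}-(a+b)\varepsilon\mu(K)^{1/p}.
\]
Choosing $\mu(K)$ small enough that $(a+b)\varepsilon\mu(K)^{1/p}\leq b\mu(\mathcal{T})^{1/p}$ forces $\langle f,S_{\varepsilon}^{p}\rangle\geq0$, so $f\in L^{p}(\mathcal{T})_{\varepsilon}^{+}$; yet $f=-a$ on $K$ of positive measure, so $f\notin L^{p}(\mathcal{T})_{+}$, a contradiction.

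\emph{If direction.} Suppose $\operatorname{supp}(\mu)=\{t_{1},\ldots,t_{n}\}$ is finite, so that $L^{p}(\mathcal{T})$ is a finite-dimensional weighted $\ell^{p}$-space. By Lemma \ref{lemLP0}, $S_{1}^{p}=\{e_{q}\}$, and since every coordinate of $e_{q}$ is strictly positive, $e_{q}$ lies in the interior of the positive cone $L^{q}(\mathcal{T})_{+}$ inside the finite-dimensional space $L^{q}(\mathcal{T})_{h}$. The family $\{S_{\varepsilon}^{p}\}_{\varepsilon>1}$ consists of compact convex sets decreasing with $\varepsilon$ and intersecting in $S_{1}^{p}=\{e_{q}\}$, so a routine compactness argument (any accumulation point of nets $g_{\varepsilon}\in S_{\varepsilon}^{p}$ as $\varepsilon\downarrow 1$ lies in $S_{1}^{p}=\{e_{q}\}$) yields some $\varepsilon>1$ with $S_{\varepsilon}^{p}$ contained in the chosen positivity neighborhood of $e_{q}$, i.e.\ $S_{\varepsilon}^{p}\subseteq L^{q}(\mathcal{T})_{+}$. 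Lemma \ref{lemLP2} translates this into $L^{p}(\mathcal{T})_{+}\subseteq L^{p}(\mathcal{T})_{\varepsilon}^{+}$, giving the required comparability.

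The main obstacle is the construction in the second sub-case of the ``only if'' direction: it is not covered by Lemmas \ref{lemLP1}--\ref{lemLP4} and requires the H\"{o}lder estimate above to verify that the negative indicator part of $f$ does not destroy positivity uniformly against the whole family $S_{\varepsilon}^{p}$. The remaining pieces follow by combining the existing lemmas with the standard finite-dimensional compactness argument above.
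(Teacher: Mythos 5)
Your proof is correct, but it takes a genuinely different route from the paper's in two of its three pieces. The sub-case $L^{p}(\mathcal{T})_{+}\subseteq L^{p}(\mathcal{T})_{\varepsilon}^{+}$ of the ``only if'' direction is handled exactly as in the paper (Lemmas \ref{lemLP3} and \ref{lemLP4}). For the reverse inclusion $L^{p}(\mathcal{T})_{\varepsilon}^{+}\subseteq L^{p}(\mathcal{T})_{+}$, the paper works on the dual side: it observes that the cone inclusion forces $S^{p}\subseteq S_{\varepsilon}^{p}$ and, assuming $r_{\mu}=0$, constructs a \emph{positive} function $f=a[\mathcal{T}\setminus K]+b[K]$ lying in $S^{p}\setminus S_{\varepsilon}^{p}$, with separate formulas for $p=1$ and $p>1$; you instead produce a primal witness $f=b[\mathcal{T}\setminus K]-a[K]\in L^{p}(\mathcal{T})_{\varepsilon}^{+}\setminus L^{p}(\mathcal{T})_{+}$ via a single H\"older estimate $\int_{K}g\leq\varepsilon\mu(K)^{1/p}$ valid uniformly in $1\leq p<\infty$, which is arguably cleaner and avoids the case split. (Both arguments in the end establish $r_{\mu}>0$; do state explicitly that you then invoke Lemma \ref{lemLP4} in this sub-case as well, as you did in the first one.) For the ``if'' direction the paper is quantitative: it computes the explicit threshold $r_{c}=\wedge\{(\sum_{j}c_{j}/\sum_{j\neq k}c_{j})^{1/p}\}>1$ and shows by a direct H\"older estimate that $S_{\varepsilon}^{p}\subseteq\ell^{q}(c)_{+}$ for every $1<\varepsilon\leq r_{c}$, which is what underlies the remark that $\varepsilon=2^{1/p}$ recovers the original cone when $n=2$; your argument is soft, combining Lemma \ref{lemLP0} ($S_{1}^{p}=\{e_{q}\}$), the fact that $e_{q}$ is interior to the positive orthant, and a nested-compact-sets argument to obtain an unspecified $\varepsilon>1$ with $S_{\varepsilon}^{p}\subseteq L^{q}(\mathcal{T})_{+}$, then Lemma \ref{lemLP2} as in the paper. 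Your compactness step legitimately exploits finite-dimensionality (norm-compactness of the $S_{\varepsilon}^{p}$), so it is sound; what you lose relative to the paper is only the explicit admissible range of $\varepsilon$.
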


\begin{proof}
First assume that $\operatorname{supp}\left(  \mu\right)  $ is finite, that
is, $\mu=\sum_{t\in S}c_{t}\delta_{t}$ for a finite subset $S\subseteq
\mathcal{T}$ and $c_{t}>0$. Then $L^{p}\left(  \mathcal{T}\right)  =\ell
^{p}\left(  c\right)  $ for a finite sequence $c=\left\{  c_{j}:1\leq j\leq
n\right\}  $, $c_{j}>0$, and $\left\Vert x\right\Vert _{p}=\left(  \sum
_{j=1}^{n}\left\vert x_{j}\right\vert ^{p}c_{j}\right)  ^{1/p}$ for every
$x\in\ell^{p}\left(  c\right)  $. The duality of the pair $\left(  \ell
^{p}\left(  c\right)  ,\ell^{q}\left(  c\right)  \right)  $ if given by
$\left\langle x,y\right\rangle =\sum_{j=1}^{n}x_{j}y_{j}c_{j}$. Moreover,
$y\in S_{\varepsilon}^{p}$ for $\varepsilon>1$ iff $\left\Vert y\right\Vert
_{q}\leq1$ and $\sum_{j}y_{j}c_{j}=\int y=\mu\left(  \mathcal{T}\right)
^{1/p}=\left(  \sum_{j}c_{j}\right)  ^{1/p}$. Put
\[
r_{c}=\wedge\left\{  \left(  \dfrac{\sum_{j}c_{j}}{\sum_{j\neq k}c_{j}%
}\right)  ^{1/p}:1\leq k\leq n\right\}  >1
\]
and fix $\varepsilon$ with $1<\varepsilon\leq r_{c}$. If $y\in S_{\varepsilon
}^{p}$ then $y_{k}\geq0$ for all $k$. Indeed, in the case of $y_{k}<0$ for a
certain $k$, we deduce that
\begin{align*}
\left(  \sum_{j}c_{j}\right)  ^{1/p}  &  =\sum_{j}y_{j}c_{j}<\sum_{j\neq
k}y_{j}c_{j}\leq\sum_{j\neq k}\left\vert y_{j}\right\vert c_{j}^{1/q}%
c_{j}^{1/p}\leq\left(  \sum_{j\neq k}\left\vert y_{j}\right\vert ^{q}%
c_{j}\right)  ^{1/q}\left(  \sum_{j\neq k}c_{j}\right)  ^{1/p}\\
&  \leq\left\Vert y\right\Vert _{q}\left(  \sum_{j\neq k}c_{j}\right)
^{1/p}\leq\varepsilon\left(  \sum_{j\neq k}c_{j}\right)  ^{1/p},
\end{align*}
which means that $\varepsilon>\left(  \dfrac{\sum_{j}c_{j}}{\sum_{j\neq
k}c_{j}}\right)  ^{1/p}\geq r_{c}$, a contradiction. Thus $S_{\varepsilon}%
^{p}\subseteq\ell^{q}\left(  c\right)  _{+}$, which in turn implies that
$\ell^{p}\left(  c\right)  _{+}\subseteq\ell^{p}\left(  c\right)
_{\varepsilon}^{+}$ thanks to Lemma \ref{lemLP2}.

Conversely, if $L^{p}\left(  \mathcal{T}\right)  _{+}\subseteq L^{p}\left(
\mathcal{T}\right)  _{\varepsilon}^{+}$ holds for some $\varepsilon>1$, then
$r_{\mu}>0$ by Lemma \ref{lemLP3}. Hence $\operatorname{supp}\left(
\mu\right)  $ is finite thanks to Lemma \ref{lemLP4}.

If $L^{p}\left(  \mathcal{T}\right)  _{\varepsilon}^{+}\subseteq L^{p}\left(
\mathcal{T}\right)  _{+}$ then it is immediate that $L^{p}\left(
\mathcal{T}\right)  _{+}$ is unital which is not the case unless
$\operatorname{supp}\left(  \mu\right)  $ is finite. That fact is known in
Banach lattices theory \cite[Ch. 2]{PMN} claiming that $L^{p}$-spaces have no
strong order unit but only weak one. In the present case we can provide an
independent argument similar to those in the proof of Lemma \ref{lemLP3} to
fix it. Namely, suppose that $L^{p}\left(  \mathcal{T}\right)  _{\varepsilon
}^{+}\subseteq L^{p}\left(  \mathcal{T}\right)  _{+}$ holds for some
$\varepsilon>1$. Then $S^{p}\subseteq S_{\varepsilon}^{p}$, where $S^{p}$ is
the state space of the cone $L^{p}\left(  \mathcal{T}\right)  _{+}$. But
$S^{p}$ consists of those $f\in L^{q}\left(  \mathcal{T}\right)  _{+}$ such
that $\int f=\mu\left(  \mathcal{T}\right)  ^{1/p}$, for
\[
\left\langle L^{p}\left(  \mathcal{T}\right)  _{+},f\right\rangle
\geq0\Leftrightarrow\left\langle \left[  K\right]  ,f\right\rangle
\geq0\text{, }K\subseteq\mathcal{T}\Leftrightarrow f\in L^{q}\left(
\mathcal{T}\right)  _{+}%
\]
(see Lemma \ref{lemLP1}). Based on Lemma \ref{lemLP4}, it suffices to prove
that $r_{\mu}>0$. If $r_{\mu}=0$ there are compact sets $K$ with sufficiently
small $\mu\left(  K\right)  $, and as above fix%
\[
0<a<\mu\left(  \mathcal{T}\right)  ^{-1/q}\text{, \quad}b=\dfrac{\mu\left(
\mathcal{T}\right)  ^{1/p}-a\mu\left(  \mathcal{T}\backslash K\right)  }%
{\mu\left(  K\right)  }>1
\]
for $p>1$. Then $\mu\left(  \mathcal{T}\right)  ^{1/p}-a\mu\left(
\mathcal{T}\right)  >0$ and
\[
b^{q}\mu\left(  K\right)  =\dfrac{\left(  \mu\left(  \mathcal{T}\right)
^{1/p}-a\mu\left(  \mathcal{T}\backslash K\right)  \right)  ^{q}}{\mu\left(
K\right)  ^{q-1}}>\varepsilon^{q}%
\]
for small $\mu\left(  K\right)  >0$. Put $f=a\left[  \mathcal{T}\backslash
K\right]  +b\left[  K\right]  \in L^{q}\left(  \mathcal{T}\right)  _{+}$. Then
$\left\Vert f\right\Vert _{q}^{q}=\int f^{q}=a^{q}\mu\left(  \mathcal{T}%
\backslash K\right)  +b^{q}\mu\left(  K\right)  >\varepsilon^{q}$ and $\int
f=a\mu\left(  \mathcal{T}\backslash K\right)  +b\mu\left(  K\right)
=\mu\left(  \mathcal{T}\right)  ^{1/p}$, which means that $f\in S^{p}%
\backslash S_{\varepsilon}^{p}$, a contradiction. If $p=1$ we fix
$b>\varepsilon$ and $K\subseteq\mathcal{T}$ with $0<\mu\left(  K\right)
/\mu\left(  \mathcal{T}\right)  <1/b$. The rational function $y=\dfrac
{1-x}{b-x}$, $0\leq x\leq1$ is a continuous decreasing function, therefore
$\mu\left(  K\right)  /\mu\left(  \mathcal{T}\right)  =\dfrac{1-a}{b-a}$ for
some $0<a<1$. Put $f=a\left[  \mathcal{T}\backslash K\right]  +b\left[
K\right]  \in L^{\infty}\left(  \mathcal{T}\right)  _{+}$ with $\left\Vert
f\right\Vert _{\infty}=a\vee b=b>\varepsilon$. Since $\int f=b\mu\left(
\mathcal{T}\backslash K\right)  +a\mu\left(  K\right)  =\mu\left(
\mathcal{T}\right)  $, it follows that $f\in S^{1}\backslash S_{\varepsilon
}^{1}$, a contradiction.
\end{proof}

As we have noticed above the equality $\ell^{p}\left(  c\right)  =\ell
^{p}\left(  c\right)  _{\varepsilon}^{+}$ holds only in the case of dimension
$2$. The same can be observed in the noncommutative case too that we are going
to tackle with below.

\section{Projective positivity in $L^{p}$-spaces of a finite von Neumann
algebra\label{secVNM}}

In this section we investigate the projective cones in $L^{p}$-spaces of a
finite von Neumann algebra.

\subsection{Projective positivity in matrix algebras}

We do analysis of the full matrix algebra case. Consider the $\ast$-vector
space $\mathbb{M}_{n}$ of all complex $n\times n$-matrices for $n>1$. The
identity matrix in $\mathbb{M}_{n}$ is denoted by $e$. For every $1\leq
p\leq\infty$ we define the Schatten $p$-norm on $\mathbb{M}_{n}$ by
$\left\Vert x\right\Vert _{p}=\tau\left(  \left\vert x\right\vert ^{p}\right)
^{1/p}$, $x\in\mathbb{M}_{n}$, where $\tau$ is the standard trace on
$\mathbb{M}_{n}$. The normed vector space $\left(  \mathbb{M}_{n},\left\Vert
\cdot\right\Vert _{p}\right)  $ is denoted by $\mathfrak{S}_{p}$. For the
extremal values $p=1,\infty$ we also use the notations $T_{n}=\mathfrak{S}%
_{1}$ and $M_{n}=\mathfrak{S}_{\infty}$. Recall that $\mathfrak{S}_{p}$ and
$\mathfrak{S}_{q}$ are in the canonical duality through the pairing
$\left\langle \cdot,\cdot\right\rangle :\mathfrak{S}_{p}\times\mathfrak{S}%
_{q}\rightarrow\mathbb{C}$, $\left\langle x,y\right\rangle =\tau\left(
xy\right)  $ whenever $\left(  p,q\right)  $ is a conjugate pair. Thus
$\mathfrak{S}_{p}^{\ast}=\mathfrak{S}_{q}$ up to an isometric identification
given by $\tau$. In particular, $M_{n}^{\ast}=T_{n}$, and $M_{n}^{+}$ is the
standard unital cone of positive elements of the $C^{\ast}$-algebra $M_{n}$
with its state space
\[
S_{\tau}=\left\{  y\in T_{n}:y\geq0,\tau\left(  y\right)  =1\right\}
\]
to be the convex set of all density matrices from $\mathbb{M}_{n}$. Thus
\begin{equation}
S\left(  M_{n}^{+}\right)  =S_{\tau}=\operatorname{co}\left\{  \zeta\odot
\zeta:\left\Vert \zeta\right\Vert =1\right\}  \label{ssco}%
\end{equation}
is the convex hull of the one-rank projections $\left(  \zeta\odot
\zeta\right)  \left(  \eta\right)  =\left(  \eta,\zeta\right)  \zeta$,
$\zeta,\eta\in\ell^{2}\left(  n\right)  $. Along with the cone $M_{n}^{+}$ one
can generate its deformed versions $M_{n,\varepsilon}^{+}$, $\varepsilon\geq1$
considered above in Section \ref{SecDPOS}. Namely, $x\in M_{n,\varepsilon}%
^{+}$ iff $x\geq\left\Vert x\right\Vert \dfrac{\varepsilon-1}{\varepsilon+1}e$
in $M_{n}$. Thus $x\in M_{n,\varepsilon}^{+}$ means that $x\geq0$ and
$\wedge\sigma\left(  x\right)  \geq\dfrac{\varepsilon-1}{\varepsilon+1}\left(
\vee\sigma\left(  x\right)  \right)  $, where $\sigma\left(  x\right)  $
denotes the spectrum of $x$. In particular, every nonzero $x\in
M_{n,\varepsilon}^{+}$ is invertible automatically. By Lemma \ref{lemEp1}, the
relation
\[
S\left(  M_{n,\varepsilon}^{+}\right)  =\left\{  \left(  1+r\right)
z-rw:z,w\in S_{\tau},0\leq r\leq2^{-1}\left(  \varepsilon-1\right)  \right\}
\]
provides a detailed description of the state space of the cone
$M_{n,\varepsilon}^{+}$.

The class of all Hilbert-Schmidt matrices is denoted by $HS_{n}$, that is,
$HS_{n}=\mathfrak{S}_{2}$. Recall that $HS_{n}$ turns out to be a Hilbert
$\ast$-space with the inner product $\left(  x,y\right)  =\tau\left(
xy^{\ast}\right)  $, and every $x\in HS_{n}$ admits a unique orthonormal
expansion $x=x_{0}+\left(  x,e_{2}\right)  e_{2}$, $\left(  x_{0}%
,e_{2}\right)  =0$, where $e_{2}=n^{-1/2}e$ is the unit vector of $HS_{n}$.
One can define the following cone
\begin{equation}
HS_{n}^{+}=\left\{  x\in HS_{n,h}:\left\Vert x\right\Vert _{2}\leq\left(
\dfrac{n}{n-1}\right)  ^{1/2}\left(  x,e_{2}\right)  \right\}  \label{HSP}%
\end{equation}
in the unital Hilbert $\ast$-space $\left(  HS_{n},e_{2}\right)  $. This cone
is a deformation of the standard cone of a unital Hilbert $\ast$-space
considered in \cite{Dfaa19}. Actually, the constant $\left(  \dfrac{n}%
{n-1}\right)  ^{1/2}$ can be reduced to $\sqrt{2}$ by making a suitable
transformation of the state space, and $HS_{n}^{+}$ defines a projective
positivity (see below to the proof of the forthcoming assertion).

\begin{theorem}
\label{thMP1}All $\mathfrak{S}_{p}$ are unital $\ast$-normed spaces with their
common unit $e$. Thus $\mathfrak{S}_{p}$ is a function system equipped with
the cone $\mathfrak{S}_{p}^{+}$ given by $\varepsilon=1>1/\left\Vert
e\right\Vert _{p}$ such that%
\[
M_{n,\varepsilon}^{+}\subseteq\mathfrak{S}_{1}^{+}\subseteq\mathfrak{S}%
_{p}^{+}\subseteq\mathfrak{S}_{\infty}^{+}=M_{n}^{+},\quad\varepsilon\geq n
\]
is an increasing family of the unital cones with $S\left(  M_{n}^{+}\right)
=S_{\tau}$ and
\[
S\left(  \mathfrak{S}_{p}^{+}\right)  =\left\{  \left(  1+r\right)
z-rw:z,w\in S_{\tau},zw=0,\left\Vert \left(  1+r\right)  z+rw\right\Vert
_{q}\leq1\right\}
\]
for all conjugate couples $\left(  p,q\right)  $. Moreover, $\mathfrak{S}%
_{2}^{+}=HS_{n}^{+}$.
\end{theorem}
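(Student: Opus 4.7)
The plan is to peel off each assertion in turn, leveraging Theorem \ref{thFS1} together with the spectral decomposition of hermitian matrices and the trace duality $\mathfrak{S}_p^{\ast} = \mathfrak{S}_q$. First, I would check that $e$ is a unit for every $\mathfrak{S}_p$ by exhibiting $n^{-1}e \in S_1^p$: indeed $\tau(e \cdot n^{-1}e) = 1$ and $\|n^{-1}e\|_q = n^{1/q - 1} = n^{-1/p} \leq 1$. Since $\|e\|_p = n^{1/p}$, one has $1/\|e\|_p = n^{-1/p} < 1$, so Theorem \ref{thFS1} applies with $\varepsilon = 1$ and furnishes a separated closed unital cone $\mathfrak{S}_p^+$ whose state space is $S_1^p = \{y \in \mathfrak{S}_{q,h} : \tau(y) = 1, \|y\|_q \leq 1\}$, turning $(\mathfrak{S}_p, \mathfrak{S}_p^+)$ into a concrete function system on $S_1^p$.

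Next, I would derive the displayed parametrization of $S(\mathfrak{S}_p^+)$. Given $y \in S_1^p$, the spectral decomposition writes $y = y_+ - y_-$ with $y_\pm \geq 0$ and $y_+ y_- = 0$; put $t = \tau(y_+)$, $s = \tau(y_-)$, so $t - s = 1$. If $s = 0$, then $y \in S_\tau$, and $\|y\|_q \leq 1$ is automatic (the eigenvalues of $y$ are non-negative and sum to $1$, hence $\sum \lambda_i^q \leq \sum \lambda_i = 1$), which fits the formula with $r = 0$. If $s > 0$, set $z = t^{-1}y_+$, $w = s^{-1}y_-$, $r = s$: then $z, w \in S_\tau$, $zw = 0$, and $y = (1+r)z - rw$. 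The orthogonality $zw = 0$ gives
\[
\|(1+r)z - rw\|_q^q = (1+r)^q\|z\|_q^q + r^q\|w\|_q^q = \|(1+r)z + rw\|_q^q,
\]
so $\|y\|_q \leq 1$ is equivalent to $\|(1+r)z + rw\|_q \leq 1$; conversely every element of the displayed set lies in $S_1^p$ by the same identity.

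For the chain of inclusions I would use the bipolar principle $\mathfrak{c} \subseteq \mathfrak{k} \iff S(\mathfrak{k}) \subseteq S(\mathfrak{c})$ and verify the reversed chain $S_\tau \subseteq S_1^p \subseteq S_1^1 \subseteq S(M_{n,\varepsilon}^+)$ for $\varepsilon \geq n$. The first inclusion was recorded above; the second follows from $\|y\|_\infty \leq \|y\|_q$ on hermitian matrices; the third is $\|y\|_1 = \sum|\lambda_i| \leq n\|y\|_\infty \leq n \leq \varepsilon$ whenever $\|y\|_\infty \leq 1$. The equality $\mathfrak{S}_\infty^+ = M_n^+$ comes from $S_1^\infty = S_\tau$: $\|y\|_1 = \tau(|y|) \geq \tau(y) = 1$ with equality forcing $|y| = y$, so $y \geq 0$.

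The main obstacle is the identification $\mathfrak{S}_2^+ = HS_n^+$, which requires a genuine Hilbert-space computation. I would decompose $y \in S_1^2$ orthogonally as $y = n^{-1/2}e_2 + y_0$ with $(y_0, e_2) = 0$, giving $\|y_0\|_2^2 \leq 1 - n^{-1} = (n-1)/n$. Any $v \in HS_{n,h}$ splits as $v = \beta e_2 + v_0$ with $\beta = (v, e_2)$ and $(v_0, e_2) = 0$, producing $\tau(vy) = (v, y) = n^{-1/2}\beta + (v_0, y_0)$. The infimum of $(v_0, y_0)$ over admissible $y_0$ is $-\|v_0\|_2 \, ((n-1)/n)^{1/2}$, so $v \in \mathfrak{S}_2^+$ exactly when $\beta \geq (n-1)^{1/2}\|v_0\|_2$. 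Squaring (since $\beta \geq 0$ is forced) and substituting $\|v_0\|_2^2 = \|v\|_2^2 - \beta^2$ collapses this to $\|v\|_2 \leq (n/(n-1))^{1/2}(v, e_2)$, matching the definition of $HS_n^+$ in \eqref{HSP}.
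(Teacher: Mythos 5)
Your proposal is correct, and its skeleton coincides with the paper's proof: check that $e$ is a unit, invoke Theorem \ref{thFS1} with $\varepsilon=1$, parametrize $S(\mathfrak{S}_p^{+})=S_{1}^{p}$ through the spectral decomposition $y=y_{+}-y_{-}$ with $\left\vert y\right\vert=(1+r)z+rw$, identify $\mathfrak{S}_{2}^{+}$ with $HS_{n}^{+}$ by exactly the same orthogonal-decomposition computation in $(HS_{n},e_{2})$, and deduce the chain of cones from the reversed chain of state spaces. The differences are in the bookkeeping, and they are legitimate: the paper first establishes the unitary-orbit description $S_{1}^{p}=\cup\left\{ u^{\ast}\left(\Pi_{n}\cap\operatorname{ball}\mathbb{R}_{q}^{n}\right)u\right\}$ (formula (\ref{sps})) and uses it for non-emptiness, for $S_{1}^{\infty}=S_{\tau}$ and for the monotonicity $S_{1}^{p_{2}}\subseteq S_{1}^{p_{1}}$, whereas you simply exhibit $n^{-1}e\in S_{1}^{p}$ and use the eigenvalue inequalities $\left\Vert y\right\Vert_{\infty}\leq\left\Vert y\right\Vert_{q}$ and $\left\Vert y\right\Vert_{1}\leq n\left\Vert y\right\Vert_{\infty}$; this is shorter, but it forgoes the geometric picture (\ref{sps}) that the paper reuses after the theorem. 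Your final containment $S_{1}^{1}\subseteq S\left(M_{n,\varepsilon}^{+}\right)$ tacitly relies on Lemma \ref{lemEp1} (the identification $S\left(M_{n,\varepsilon}^{+}\right)=S_{\varepsilon}$), where the paper instead bounds $r\leq\left(n-1\right)/2$ and passes through $S\left(M_{n,n}^{+}\right)$; the content is the same, but the citation should be explicit, and note that only the easy direction of your ``bipolar principle'' is needed (state-space inclusion implies cone inclusion, valid here because each larger cone is the dual cone of its state space, with $M_{n}^{+}$ covered by (\ref{sp})). Three small touch-ups: verify the $\ast$-norm property of $\left\Vert\cdot\right\Vert_{p}$ (one line via the polar decomposition, $\left\vert x^{\ast}\right\vert=u\left\vert x\right\vert u^{\ast}$); observe that for $p=\infty$ the inequality $1>1/\left\Vert e\right\Vert_{\infty}$ fails, so Theorem \ref{thFS1} is not available there and $\mathfrak{S}_{\infty}^{+}=M_{n}^{+}$ must be (and, in your sketch, is) handled separately via $S_{1}^{\infty}=S_{\tau}$; and state the norm identity for $zw=0$ as $\left\vert\left(1+r\right)z-rw\right\vert=\left(1+r\right)z+rw$ rather than through $q$-th powers, so that it also covers the conjugate couple $\left(1,\infty\right)$.
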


\begin{proof}
Take $x\in\mathfrak{S}_{p}$ with its polar decomposition $x=u\left\vert
x\right\vert $. Taking into account that $\left\vert x^{\ast}\right\vert
=u\left\vert x\right\vert u^{\ast}$, we derive that $\left\Vert x^{\ast
}\right\Vert _{p}=\tau\left(  \left\vert x^{\ast}\right\vert ^{p}\right)
^{1/p}=\tau\left(  u\left\vert x\right\vert ^{p}u^{\ast}\right)  ^{1/p}%
=\tau\left(  \left\vert x\right\vert ^{p}\right)  ^{1/p}=\left\Vert
x\right\Vert _{p}$, which means that $\mathfrak{S}_{p}$ is a $\ast$-normed
space. Prove that $e$ is a unit for $\mathfrak{S}_{p}$ (see Subsection
\ref{subsecUNS1}). Notice that $\mathfrak{S}_{p,he}^{\ast}=\left\{
y\in\mathfrak{S}_{q,h}:\tau\left(  y\right)  =1\right\}  $,
$\operatorname{ball}\mathfrak{S}_{p}^{\ast}=\operatorname{ball}\mathfrak{S}%
_{q}$, and $S_{1}^{p}=\mathfrak{S}_{p,he}^{\ast}\cap\operatorname{ball}%
\mathfrak{S}_{q}$ is the related convex set. One needs to find out a
description for $S_{1}^{p}$ and show that $S_{1}^{p}\neq\varnothing$.

The real vector space $\mathbb{R}^{n}$ equipped with the $q$-norm $\left\Vert
\mathbf{x}\right\Vert _{q}=\left(  \sum_{i=1}^{n}\left\vert x_{i}\right\vert
^{q}\right)  ^{1/q}$ is denoted by $\mathbb{R}_{q}^{n}$, that is,
$\mathbb{R}_{q}^{n}=\ell^{q}\left(  n\right)  _{h}$. Put $\Pi_{n}=\left\{
\mathbf{x}\in\mathbb{R}^{n}:\sum_{i=1}^{n}x_{i}=1\right\}  $ to be the
hyperplane in $\mathbb{R}^{n}$ with its normal vector $\mathbf{e=}\left(
1,\ldots,1\right)  $. Notice that $\Pi_{n}\cap\operatorname{ball}%
\mathbb{R}_{1}^{n}=S_{+}$, where $S_{+}=\left\{  \mathbf{x}\in\mathbb{R}%
^{n}:x_{i}\geq0,\sum_{i=1}^{n}x_{i}=1\right\}  $ is the positive surface of
the tetrahedron in $\mathbb{R}^{n}$. Moreover,%
\[
S_{+}\subseteq\Pi_{n}\cap\operatorname{ball}\mathbb{R}_{q}^{n}\subseteq\Pi
_{n}\cap\operatorname{ball}\mathbb{R}_{\infty}^{n}%
\]
is an increasing family of the hyperplane regions from $\Pi_{n}$. Pick
$\mathbf{x}\in\Pi_{n}\cap\operatorname{ball}\mathbb{R}_{q}^{n}$ and put
$x=x_{1}\oplus\cdots\oplus x_{n}\in\mathbb{M}_{n}$ to be the related diagonal
matrix. Then $\tau\left(  x\right)  =1$ and $\left\Vert x\right\Vert
_{q}=\left\Vert \mathbf{x}\right\Vert _{q}\leq1$, that is, $x\in
\mathfrak{S}_{p,he}^{\ast}\cap\operatorname{ball}\mathfrak{S}_{q}=S_{1}^{p}$.
Thus $\Pi_{n}\cap\operatorname{ball}\mathbb{R}_{q}^{n}\subseteq S_{1}^{p}$ up
to a canonical identification. Conversely, if $y\in S_{1}^{p}$ then there is a
unitary $u\in\mathbb{M}_{n}$ such that $y=u^{\ast}xu$ for a diagonal matrix
$x=x_{1}\oplus\cdots\oplus x_{n}$ with $\sigma\left(  y\right)  =\left\{
x_{i}\right\}  \subseteq\mathbb{R}$ or $\mathbf{x}\in\mathbb{R}^{n}$. But
$\left\vert y\right\vert ^{q}=u^{\ast}\left\vert x\right\vert ^{q}u$ with
$\left\vert x\right\vert =\left\vert x_{1}\right\vert \oplus\cdots
\oplus\left\vert x_{n}\right\vert $, and%
\begin{align*}
\left\Vert \mathbf{x}\right\Vert _{q}  &  =\left(  \sum_{i=1}^{n}\left\vert
x_{i}\right\vert ^{q}\right)  ^{1/q}=\tau\left(  \left\vert x\right\vert
^{q}\right)  ^{1/q}=\tau\left(  u^{\ast}\left\vert x\right\vert ^{q}u\right)
^{1/q}=\tau\left(  \left\vert y\right\vert ^{q}\right)  ^{1/q}=\left\Vert
y\right\Vert _{q}\leq1,\\
\sum_{i=1}^{n}x_{i}  &  =\tau\left(  x\right)  =\tau\left(  u^{\ast}xu\right)
=\tau\left(  y\right)  =1,
\end{align*}
which means that $\mathbf{x\in}\Pi_{n}\cap\operatorname{ball}\mathbb{R}%
_{q}^{n}$. Consequently, we obtain that
\begin{equation}
S_{1}^{p}=\cup\left\{  u^{\ast}\left(  \Pi_{n}\cap\operatorname{ball}%
\mathbb{R}_{q}^{n}\right)  u:u\in\mathbb{M}_{n},u^{\ast}u=e\right\}  .
\label{sps}%
\end{equation}
In particular, $S_{1}^{p}\neq\varnothing$ and $e$ turns out to be a unit for
every $\mathfrak{S}_{p}$ with $\left\Vert e\right\Vert _{p}=n^{1/p}>1$.

Further, put $\mathfrak{S}_{p}^{+}$ to be the set of those $x\in
\mathfrak{S}_{p,h}$ such that $\left\langle x,S_{1}^{p}\right\rangle
=\tau\left(  xS_{1}^{p}\right)  \geq0$. If $p=\infty$ then $\left\Vert
e\right\Vert _{\infty}=1$ and
\begin{align*}
S_{1}^{\infty}  &  =\cup\left\{  u^{\ast}\left(  \Pi_{n}\cap
\operatorname{ball}\mathbb{R}_{1}^{n}\right)  u:u\in\mathbb{M}_{n},u^{\ast
}u=e\right\}  =\cup\left\{  u^{\ast}S_{+}u:u\in\mathbb{M}_{n},u^{\ast
}u=e\right\} \\
&  =\left\{  y\in T_{n}:y\geq0,\tau\left(  y\right)  =1\right\}  =S_{\tau}.
\end{align*}
In this special case, we have $\mathfrak{S}_{\infty}^{+}=M_{n}^{+}$. Indeed,
$x\in\mathfrak{S}_{\infty}^{+}$ iff $\left(  x\zeta,\zeta\right)  =\tau\left(
x\left(  \zeta\odot\zeta\right)  \right)  \geq0$ for every unit vector
$\zeta\in\ell^{2}\left(  n\right)  $ (see (\ref{ssco})). If $p<\infty$ then
$\varepsilon=1>1/\left\Vert e\right\Vert _{p}$, and $\mathfrak{S}_{p}^{+}$
turns out to be a separated, closed, unital, cone in $\mathfrak{S}_{p}$ thanks
to Theorem \ref{thFS1}. Thus $\left(  \mathfrak{S}_{p},\mathfrak{S}_{p}%
^{+}\right)  $ is a function system on $S_{1}^{p}$ such that $S\left(
\mathfrak{S}_{p}^{+}\right)  =S_{1}^{p}$. In particular, for $p_{1}<p_{2}$ we
obtain that $q_{2}<q_{1}$, $S_{1}^{p_{2}}\subseteq S_{1}^{p_{1}}$ (see
(\ref{sps})), which in turn implies that $\mathfrak{S}_{p_{1}}^{+}%
\subseteq\mathfrak{S}_{p_{2}}^{+}$.

Now let us inspect the cone $\mathfrak{S}_{2}^{+}$. In this case, the norm
dual space $HS_{n}^{\ast}$ is canonically identified with the conjugate space
$\overline{HS_{n}}$, and for every $y\in\mathfrak{S}_{2,he}^{\ast}$ we have
the orthogonal expansion $y=y_{0}+\left(  1/n\right)  e$ with $y_{0}\perp e$
(or $\tau\left(  y_{0}\right)  =0$). Then $\left\Vert y\right\Vert _{2}%
^{2}=\left\Vert y_{0}\right\Vert _{2}^{2}+1/n$, and $y\in S_{1}^{2}$ iff
$\left\Vert y_{0}\right\Vert _{2}\leq\left(  \left(  n-1\right)  /n\right)
^{1/2}$. Hence
\[
S_{1}^{2}=\left(  \left(  n-1\right)  /n\right)  ^{1/2}\operatorname{ball}%
HS_{n,h}^{e}+\left(  1/n\right)  e=n^{-1/2}\left(  \left(  n-1\right)
^{1/2}\operatorname{ball}HS_{n,h}^{e}+e_{2}\right)  ,
\]
where $HS_{n,h}^{e}=\left\{  w\in HS_{n,h}:\tau\left(  w\right)  =0\right\}  $
is the real vector subspace of $\overline{HS_{n}}$ orthogonal to $e$, and
$e_{2}=n^{-1/2}e$ is the unit vector of $HS_{n}$. Thus $x\in\mathfrak{S}%
_{2}^{+}$ with its orthonormal expansion $x=x_{0}+\left(  x,e_{2}\right)
e_{2}$ iff $\left(  x,\left(  n-1\right)  ^{1/2}\operatorname{ball}%
HS_{n,h}^{e}+e_{2}\right)  \geq0$, which in turn is equivalent to
\[
\left(  n-1\right)  ^{1/2}\left\Vert x_{0}\right\Vert _{2}=\left(  n-1\right)
^{1/2}\vee\left\vert \left(  x_{0},\operatorname{ball}HS_{n,h}^{e}\right)
\right\vert \leq\left(  x,e_{2}\right)  \text{ or }\left\Vert x_{0}\right\Vert
_{2}\leq\left(  n-1\right)  ^{-1/2}\left(  x,e_{2}\right)  .
\]
Consequently, $\mathfrak{S}_{2}^{+}$ is reduced to the cone $HS_{n}^{+}$ from
(\ref{HSP}). If we choose $S_{\varepsilon}^{2}$ for $\varepsilon=\sqrt
{2/n}>1/\sqrt{n}$ (notice that $\left\Vert e\right\Vert _{2}=\sqrt{n}$)
instead of $S_{1}^{2}$, then based on Theorem \ref{thFS1}, the projective
positivity will exactly be reduced to one from \cite{Dfaa19}. Namely,
$y=y_{0}+\left(  1/n\right)  e\in S_{\varepsilon}^{2}$ iff $\left\Vert
y\right\Vert _{2}^{2}=\left\Vert y_{0}\right\Vert _{2}^{2}+1/n\leq
\varepsilon^{2}=2/n$ or $\left\Vert y_{0}\right\Vert _{2}\leq1/\sqrt{n}$. Thus
$S_{\varepsilon}^{2}=n^{-1/2}\left(  \operatorname{ball}HS_{n,h}^{e}%
+e_{2}\right)  $, and for $x\in HS_{n}$ we have $\left(  x,S_{\varepsilon}%
^{2}\right)  \geq0$ iff $\left\Vert x\right\Vert _{2}\leq\sqrt{2}\left(
x,e_{2}\right)  $.

Finally, let us prove the equality for the state spaces $S\left(
\mathfrak{S}_{p}^{+}\right)  $. Suppose that a hermitian matrix $y$ admits the
expansion $y=\left(  1+r\right)  z-rw$ with $z,w\in S_{\tau}$, $zw=0$ and
$\left\Vert z+r\left(  z+w\right)  \right\Vert _{q}\leq1$. Then $\tau\left(
y\right)  =\left(  1+r\right)  \tau\left(  z\right)  -r\tau\left(  w\right)
=1+r-r=1$, $\left\vert y\right\vert =\left(  1+r\right)  z+rw$ and $\left\Vert
y\right\Vert _{q}=\left\Vert \left\vert y\right\vert \right\Vert
_{q}=\left\Vert \left(  1+r\right)  z+rw\right\Vert _{q}\leq1$, that is,
$y\in\mathfrak{S}_{p,he}^{\ast}\cap\operatorname{ball}\mathfrak{S}_{q}%
=S_{1}^{p}=S\left(  \mathfrak{S}_{p}^{+}\right)  $. Conversely, take $y\in
S_{1}^{p}$. Using the spectral decomposition of $y$, we deduce that
$y=y_{+}-y_{-}=\tau\left(  y_{+}\right)  z-\tau\left(  y_{-}\right)  w$ with
$z,w\in S_{\tau}$ and $\tau\left(  y_{+}\right)  -\tau\left(  y_{-}\right)
=1$. If $r=\tau\left(  y_{-}\right)  $ then $y=\left(  1+r\right)  z-rw$ with
$z,w\in S_{\tau}$, $zw=0$ and $\left\Vert \left(  1+r\right)  z+rw\right\Vert
_{q}=\left\Vert \left\vert y\right\vert \right\Vert _{q}=\left\Vert
y\right\Vert _{q}\leq1$. Hence $S\left(  \mathfrak{S}_{p}^{+}\right)  $
consists of those $\left(  1+r\right)  z-rw$ with $z,w\in S_{\tau}$, $zw=0$,
and $\left\Vert \left(  1+r\right)  z+rw\right\Vert _{q}\leq1$. In particular,
$S\left(  \mathfrak{S}_{1}^{+}\right)  $ consists of those $y=\left(
1+r\right)  z-rw$ such that $z,w\in S_{\tau}$, $zw=0$ and $\left\Vert \left(
1+r\right)  z+rw\right\Vert _{\infty}\leq1$.

Pick $y\in S\left(  \mathfrak{S}_{1}^{+}\right)  $ with $\left(  1+r\right)
z+rw=\left\vert y\right\vert \leq e$. Then $\left\vert y\right\vert ^{p}\leq
e$, $p\geq1$, and $1+2r=\left(  1+r\right)  \tau\left(  z\right)
+r\tau\left(  w\right)  =\tau\left(  \left\vert y\right\vert \right)  \leq
\tau\left(  e\right)  =n$ (or $r\leq\left(  n-1\right)  /2$). Hence
\[
S\left(  \mathfrak{S}_{p}^{+}\right)  \subseteq\left\{  \left(  1+r\right)
z-rw:z,w\in S\left(  M_{n}^{+}\right)  ,0\leq r\leq\left(  n-1\right)
/2\right\}  =S\left(  M_{n,n}^{+}\right)  \subseteq S\left(  M_{n,\varepsilon
}^{+}\right)
\]
for all $p\geq1$ and $\varepsilon\geq n$ by virtue of Lemma \ref{lemEp1}. In
particular, $M_{n,\varepsilon}^{+}\subseteq\mathfrak{S}_{1}^{+}\subseteq
\mathfrak{S}_{p}^{+}\subseteq M_{n}^{+}$ is an increasing family of the unital cones.
\end{proof}

We also write $T_{n}^{+}$ instead of $\mathfrak{S}_{1}^{+}$. Based on Theorem
\ref{thMP1}, we have
\begin{align*}
S_{\tau}  &  \subseteq S\left(  T_{n}^{+}\right)  =\left\{  \left(
1+r\right)  z-rw:z,w\in S_{\tau},zw=0,\left\Vert \left(  1+r\right)
z+rw\right\Vert _{\infty}\leq1\right\} \\
&  \subseteq\left\{  \left(  1+r\right)  z-rw:z,w\in S_{\tau},0\leq
r\leq\left(  n-1\right)  /2\right\}  .
\end{align*}
Moreover, using (\ref{sps}), we obtain that
\[
S\left(  T_{n}^{+}\right)  =S_{1}^{1}=\cup\left\{  u^{\ast}\left(  \Pi_{n}%
\cap\operatorname{ball}\mathbb{R}_{\infty}^{n}\right)  u:u\in\mathbb{M}%
_{n},u^{\ast}u=e\right\}  ,
\]
which provides a geometric interpretation of the state space $S\left(
T_{n}^{+}\right)  $. Notice that the equality $S_{\tau}=S\left(  T_{n}%
^{+}\right)  $ is true only if $n=2$. For $n=3$ the matrices $\left(
-1\right)  \oplus1\oplus1$, $1\oplus\left(  -1\right)  \oplus1$ and
$1\oplus1\oplus\left(  -1\right)  $ are some (extreme) elements of $S\left(
T_{n}^{+}\right)  $ out of $S_{\tau}$.

Thus $T_{n}^{+}=\left\{  x\in T_{n,h}:\tau\left(  xS\left(  T_{n}^{+}\right)
\right)  \geq0\right\}  $ and $M_{n,\varepsilon}^{+}\subseteq T_{n}%
^{+}\subseteq M_{n}^{+}$ for all $\varepsilon\geq n$. For $n=3$ and
$x=2\oplus0\oplus1$, we have $x\in M_{3}^{+}\backslash T_{3}^{+}$, for
$\tau\left(  x\left(  \left(  -1\right)  \oplus1\oplus1\right)  \right)
=-1<0$. Thus the eigenvalues of a projective-positive matrix from $T_{3}^{+}$
must be strictly positive real numbers.

\subsection{Projective positivity in finite von Neumann algebras}

Now fix a finite von Neumann algebra $\mathcal{M}$ with its unit $e$ and a
faithful, finite, normal trace $\tau$. We can always assume that $\tau\left(
e\right)  >1$. The noncommutative $L^{p}$-space $L^{p}\left(  \mathcal{M}%
,\tau\right)  $ of $\mathcal{M}$ is given as the completion of $\mathcal{M}$
with respect to the norm $\left\Vert x\right\Vert _{p}=\tau\left(  \left\vert
x\right\vert ^{p}\right)  ^{1/p}$, $x\in\mathcal{M}$, $1\leq p\leq\infty$. As
above $L^{p}\left(  \mathcal{M},\tau\right)  $ is a unital $\ast$-vector space
with $\left\Vert e\right\Vert _{p}=\tau\left(  e\right)  ^{1/p}>1$, and
$S_{\tau}=\left\{  y\in\mathcal{M}_{+}:\tau\left(  y\right)  =1\right\}  $
denotes the convex set of all density elements in $\mathcal{M}$. The Banach
space $L^{1}\left(  \mathcal{M},\tau\right)  $ turns out to be the predual of
$\mathcal{M}$, which allows to identify $L^{1}\left(  \mathcal{M},\tau\right)
$ with a closed subspace of $\mathcal{M}^{\ast}$. In this case, $S_{\ast
}\left(  \mathcal{M}_{+}\right)  =S\left(  \mathcal{M}_{+}\right)  \cap
L^{1}\left(  \mathcal{M},\tau\right)  $ is the set of all normal states of the
cone $\mathcal{M}_{+}$, and $S_{\tau}^{-}=S_{\ast}\left(  \mathcal{M}%
_{+}\right)  $ for the norm closure of $S_{\tau}$ in $L^{1}\left(
\mathcal{M},\tau\right)  $ (see \cite[Proposition 7.3]{DSbM}). Moreover, we
have the canonical continuous inclusions
\[
\mathcal{M=}L^{\infty}\left(  \mathcal{M},\tau\right)  \subseteq L^{p}\left(
\mathcal{M},\tau\right)  \subseteq L^{1}\left(  \mathcal{M},\tau\right)
\]
in the descending order. Indeed, for $1\leq p<l<\infty$ and $r=l/p>1$ we have%
\[
\left\Vert y\right\Vert _{p}^{p}=\tau\left(  \left\vert y\right\vert
^{l/r}e\right)  \leq\tau\left(  \left\vert y\right\vert ^{rl/r}\right)
^{1/r}\tau\left(  e\right)  ^{1-1/r}=\left\Vert y\right\Vert _{l}^{p}%
\tau\left(  e\right)  ^{1-p/l}\text{ or }\left\Vert y\right\Vert _{p}%
\leq\left\Vert y\right\Vert _{l}\tau\left(  e\right)  ^{\left(  l-p\right)
/lp}%
\]
for all $y\in\mathcal{M}$. It means that $L^{l}\left(  \mathcal{M}%
,\tau\right)  \subseteq L^{p}\left(  \mathcal{M},\tau\right)  $ is a bounded
embedding with the norm $\tau\left(  e\right)  ^{\left(  l-p\right)  /lp}$ for
all $1\leq p<l<\infty$. If $l=\infty$ then $\left\Vert y\right\Vert _{p}%
=\tau\left(  \left\vert y\right\vert ^{p}e\right)  ^{1/p}\leq\left\Vert
y\right\Vert \tau\left(  e\right)  ^{1/p}$ for all $y\in\mathcal{M}$. Thus
\begin{equation}
\operatorname{ball}L^{l}\left(  \mathcal{M},\tau\right)  \subseteq\tau\left(
e\right)  ^{\left(  l-p\right)  /lp}\operatorname{ball}L^{p}\left(
\mathcal{M},\tau\right)  \label{iball}%
\end{equation}
for all $1\leq p<l\leq\infty$. As above $L^{2}\left(  \mathcal{M},\tau\right)
$ is a Hilbert $\ast$-space with the inner product $\left(  x,y\right)
=\tau\left(  xy^{\ast}\right)  $, and every $x\in L^{2}\left(  \mathcal{M}%
,\tau\right)  $ admits a unique orthonormal expansion $x=x_{0}+\left(
x,e_{2}\right)  e_{2}$, $\left(  x_{0},e_{2}\right)  =0$, where $e_{2}%
=\tau\left(  e\right)  ^{-1/2}e$. The cone similar to (\ref{HSP}) is defined
in the following way
\[
\mathfrak{c}_{\tau}=\left\{  x\in L^{2}\left(  \mathcal{M},\tau\right)
_{h}:\left\Vert x\right\Vert _{2}\leq\left(  \dfrac{\tau\left(  e\right)
}{\tau\left(  e\right)  -1}\right)  ^{1/2}\left(  x,e_{2}\right)  \right\}
\]
in the unital Hilbert $\ast$-space $\left(  L^{2}\left(  \mathcal{M}%
,\tau\right)  ,e_{2}\right)  $.

\begin{proposition}
\label{propFVNA1}All $L^{p}\left(  \mathcal{M},\tau\right)  $ are unital
$\ast$-normed spaces with their common unit $e$. Thus $L^{p}\left(
\mathcal{M},\tau\right)  $ is a function system equipped with the cone
$L^{p}\left(  \mathcal{M},\tau\right)  _{+}$ given by $\varepsilon
=1>1/\left\Vert e\right\Vert _{p}$ such that $L^{\infty}\left(  \mathcal{M}%
,\tau\right)  _{+}=\mathcal{M}_{+}$, $L^{2}\left(  \mathcal{M},\tau\right)
_{+}=\mathfrak{c}_{\tau}$, and $\mathcal{M}_{\varepsilon}^{+}\subseteq
L^{1}\left(  \mathcal{M},\tau\right)  _{+}\cap\mathcal{M}$ for all
$\varepsilon\geq\tau\left(  e\right)  $. Moreover,
\[
S\left(  L^{1}\left(  \mathcal{M},\tau\right)  _{+}\right)  =\left\{  \left(
1+r\right)  z-rw:z,w\in S_{\tau},zw=0,\left\Vert \left(  1+r\right)
z+rw\right\Vert \leq1\right\}  .
\]

\end{proposition}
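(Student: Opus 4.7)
The plan is to mirror the template of Theorem \ref{thMP1}, with the standard trace on $\mathbb{M}_{n}$ replaced by the faithful, finite, normal trace $\tau$ on $\mathcal{M}$, and with an additional direct estimate at the end to establish the inclusion $\mathcal{M}_{\varepsilon}^{+}\subseteq L^{1}(\mathcal{M},\tau)_{+}$. First I would verify that every $L^{p}(\mathcal{M},\tau)$ is a unital $\ast$-normed space: for $x\in\mathcal{M}$ with polar decomposition $x=u|x|$, the identity $|x^{\ast}|=u|x|u^{\ast}$ together with the trace property gives $\|x^{\ast}\|_{p}=\tau(u|x|^{p}u^{\ast})^{1/p}=\|x\|_{p}$, which extends to $L^{p}(\mathcal{M},\tau)$ by density. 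To see $e$ is a unit, note that $y=\tau(e)^{-1}e\in L^{q}(\mathcal{M},\tau)_{h}$ satisfies $\tau(ey)=1$ and $\|y\|_{q}=\tau(e)^{-1/p}\leq 1$, so $y\in S_{1}^{p}$ and $\|e\|_{p}=\tau(e)^{1/p}>1$. For $p=\infty$ the ball of $\mathcal{M}=L^{\infty}(\mathcal{M},\tau)$ is already unital by Proposition \ref{propAp1}, and combining Lemma \ref{lemUNS0} with (\ref{sp}) identifies the resulting cone with $\mathcal{M}_{+}$. For $1\leq p<\infty$ the condition $\varepsilon=1>1/\|e\|_{p}$ is fulfilled, so Theorem \ref{thFS1} yields the separated, closed, unital cone $L^{p}(\mathcal{M},\tau)_{+}=\{x\in L^{p}(\mathcal{M},\tau)_{h}:\tau(xS_{1}^{p})\geq 0\}$ making $L^{p}(\mathcal{M},\tau)$ a function system on $S_{1}^{p}$.

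For $p=2$ I would run the Hilbert-space argument analogous to the $\mathfrak{S}_{2}^{+}=HS_{n}^{+}$ step of Theorem \ref{thMP1}. With $e_{2}=\tau(e)^{-1/2}e$, the orthonormal expansion $y=y_{0}+(y,e_{2})e_{2}$ combined with $\tau(y)=1$ and $\|y\|_{2}\leq 1$ forces $(y,e_{2})=\tau(e)^{-1/2}$ and $\|y_{0}\|_{2}\leq((\tau(e)-1)/\tau(e))^{1/2}$, so $S_{1}^{2}$ is a disk of that radius centered at $\tau(e)^{-1}e$ inside the affine hyperplane $\{e\}^{\perp}+\tau(e)^{-1}e$. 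Writing $x=x_{0}+(x,e_{2})e_{2}$ and minimizing $(x,y)$ over the disk, the condition $(x,S_{1}^{2})\geq 0$ becomes $\|x_{0}\|_{2}\leq(\tau(e)-1)^{-1/2}(x,e_{2})$, which after the Pythagorean identity $\|x\|_{2}^{2}=\|x_{0}\|_{2}^{2}+(x,e_{2})^{2}$ is precisely the defining inequality of $\mathfrak{c}_{\tau}$.

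For the description of $S(L^{1}(\mathcal{M},\tau)_{+})=S_{1}^{1}=\{y\in\mathcal{M}_{h}:\tau(y)=1,\|y\|_{\infty}\leq 1\}$, I would apply the Jordan decomposition $y=y_{+}-y_{-}$ inside $\mathcal{M}$: setting $r=\tau(y_{-})$, and (when $r>0$) $z=y_{+}/(1+r)$, $w=y_{-}/r$, produces $z,w\in S_{\tau}$ with $zw=0$, $y=(1+r)z-rw$, and $|y|=(1+r)z+rw$ satisfying $\||y|\|_{\infty}=\|y\|_{\infty}\leq 1$; the reverse containment is a direct check. For the inclusion $\mathcal{M}_{\varepsilon}^{+}\subseteq L^{1}(\mathcal{M},\tau)_{+}\cap\mathcal{M}$ with $\varepsilon\geq\tau(e)$, the bound $|y|\leq e$ forces $1+2r=\tau(|y|)\leq\tau(e)\leq\varepsilon$, so $r\leq(\varepsilon-1)/2$. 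For $x\in\mathcal{M}_{\varepsilon}^{+}$ the operator bracketing $\|x\|\tfrac{\varepsilon-1}{\varepsilon+1}e\leq x\leq\|x\|e$ then gives
\[
\tau(xy)=(1+r)\tau(xz)-r\tau(xw)\geq\|x\|\left[(1+r)\tfrac{\varepsilon-1}{\varepsilon+1}-r\right]=\|x\|\tfrac{\varepsilon-1-2r}{\varepsilon+1}\geq 0,
\]
which completes the argument. The main technical point I foresee is carrying out the Jordan decomposition and the operator inequalities inside the von Neumann algebra $\mathcal{M}$ rather than in a matrix algebra; once these basic tools are in place, the rest of the proof follows the template of Theorem \ref{thMP1} without substantive new difficulty.
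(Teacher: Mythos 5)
Your proposal is correct and follows the paper's own template (Theorem \ref{thMP1} transplanted to $\mathcal{M}$ via Theorem \ref{thFS1}): the $\ast$-norm property through the polar decomposition, the $p=2$ identification $L^{2}\left(\mathcal{M},\tau\right)_{+}=\mathfrak{c}_{\tau}$ via the orthogonal expansion $x=x_{0}+\left(x,e_{2}\right)e_{2}$, and the description of $S\left(L^{1}\left(\mathcal{M},\tau\right)_{+}\right)$ through the Jordan decomposition $y=y_{+}-y_{-}$ in $\mathcal{M}$ are exactly the paper's steps. You deviate, legitimately, in two local places. First, to see that $e$ is a unit you exhibit the explicit state $\tau\left(e\right)^{-1}e\in S_{1}^{p}$ (indeed $\left\Vert \tau\left(e\right)^{-1}e\right\Vert_{q}=\tau\left(e\right)^{-1/p}\leq1$), whereas the paper argues via convexity of $\tau\left(\operatorname{ball}L^{q}\left(\mathcal{M},\tau\right)_{h}\right)$ and the identity $\vee\tau\left(\operatorname{ball}L^{q}\left(\mathcal{M},\tau\right)_{h}\right)=\tau\left(e\right)^{1/p}$; your witness is shorter and works verbatim. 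Second, for $\mathcal{M}_{\varepsilon}^{+}\subseteq L^{1}\left(\mathcal{M},\tau\right)_{+}\cap\mathcal{M}$ you check the pairing directly, $\tau\left(xy\right)\geq\left\Vert x\right\Vert\left(\varepsilon-1-2r\right)/\left(\varepsilon+1\right)\geq0$ using $x\geq\left\Vert x\right\Vert\frac{\varepsilon-1}{\varepsilon+1}e$ and $r\leq\left(\varepsilon-1\right)/2$, which inlines the computation behind Lemma \ref{lemEp1}; the paper instead chains the state-space inclusions $S\left(L^{1}\left(\mathcal{M},\tau\right)_{+}\right)\subseteq S\left(\mathcal{M}_{\tau\left(e\right)}^{+}\right)\subseteq S\left(\mathcal{M}_{\varepsilon}^{+}\right)$ and dualizes -- your route is more self-contained, the paper's records the state-space inclusion explicitly. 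One cosmetic caution: for $p=\infty$ the ball of $\mathcal{M}$ is unital only in the equivalence sense $\operatorname{ball}\mathcal{M}\sim S_{1}^{\circ}$ (Proposition \ref{propAp1}; compare Corollary \ref{corOKE44}), so Lemma \ref{lemUNS0} does not literally apply; the identification $L^{\infty}\left(\mathcal{M},\tau\right)_{+}=\mathcal{M}_{+}$ should rest, as it in effect does in your sketch and in the paper, on $S_{1}^{\infty}=S\left(\mathcal{M}_{+}\right)$ (Lemma \ref{lemOS0}) together with (\ref{sp}).
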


\begin{proof}
We use the arguments similar to those from the proof of Theorem \ref{thMP1}.
Take $x\in\mathcal{M}$ with its polar decomposition $x=u\left\vert
x\right\vert $. Since $\left\vert x^{\ast}\right\vert =u\left\vert
x\right\vert u^{\ast}$, it follows that $\left\Vert x^{\ast}\right\Vert
_{p}=\tau\left(  \left\vert x^{\ast}\right\vert ^{p}\right)  ^{1/p}%
=\tau\left(  u\left\vert x\right\vert ^{p}u^{\ast}\right)  ^{1/p}=\tau\left(
\left\vert x\right\vert ^{p}\right)  ^{1/p}=\left\Vert x\right\Vert _{p}$,
which in turn implies that $L^{p}\left(  \mathcal{M},\tau\right)  $ is a
$\ast$-normed space. Prove that $e$ is a unit for $L^{p}\left(  \mathcal{M}%
,\tau\right)  $. For $p<\infty$ we have $L^{p}\left(  \mathcal{M},\tau\right)
_{he}^{\ast}=\left\{  y\in L^{q}\left(  \mathcal{M},\tau\right)  _{h}%
:\tau\left(  y\right)  =1\right\}  $ and $\operatorname{ball}L^{p}\left(
\mathcal{M},\tau\right)  ^{\ast}=\operatorname{ball}L^{q}\left(
\mathcal{M},\tau\right)  $ with the conjugate couple $\left(  p,q\right)  $.
Moreover, based on the duality between $L^{q}\left(  \mathcal{M},\tau\right)
$ and $L^{q}\left(  \mathcal{M},\tau\right)  $, we deduce that $\tau$ defines
a bounded linear functional on $L^{q}\left(  \mathcal{M},\tau\right)  $ and
\[
1<\tau\left(  e\right)  ^{1/p}=\tau\left(  \tau\left(  e\right)
^{-1/q}e\right)  \leq\vee\tau\left(  \operatorname{ball}L^{q}\left(
\mathcal{M},\tau\right)  _{h}\right)  \leq\vee\left\vert \tau\left(
\operatorname{ball}L^{q}\left(  \mathcal{M},\tau\right)  \right)  \right\vert
\leq\tau\left(  e\right)  ^{1/p},
\]
that is, $1<\tau\left(  e\right)  ^{1/p}=\vee\tau\left(  \operatorname{ball}%
L^{q}\left(  \mathcal{M},\tau\right)  _{h}\right)  $. But $\tau\left(
\operatorname{ball}L^{q}\left(  \mathcal{M},\tau\right)  _{h}\right)
\subseteq\mathbb{R}$ is a convex subset containing the origin. It follows that
$\tau\left(  y\right)  =1$ for a certain $y\in\operatorname{ball}L^{q}\left(
\mathcal{M},\tau\right)  _{h}$, that is, $S_{1}^{p}=L^{p}\left(
\mathcal{M},\tau\right)  _{he}^{\ast}\cap\operatorname{ball}L^{q}\left(
\mathcal{M},\tau\right)  \neq\varnothing$ whenever $p<\infty$. In particular,%
\[
S_{1}^{1}=\mathcal{M}_{he}\cap\operatorname{ball}\mathcal{M}=\left\{
y\in\operatorname{ball}\mathcal{M}_{h}:\tau\left(  y\right)  =1\right\}
\neq\varnothing.
\]
For $p=\infty$ we have $\left\Vert e\right\Vert _{\infty}=1$ and
$S_{1}^{\infty}=\mathcal{M}_{he}^{\ast}\cap\operatorname{ball}\mathcal{M}%
^{\ast}$, that is, every $\varphi\in S_{1}^{\infty}$ defines a unital
contraction $\varphi:\mathcal{M}\rightarrow\mathbb{C}$ with $1=\left\langle
e,\varphi\right\rangle \leq\vee\left\vert \left\langle \operatorname{ball}%
\mathcal{M},\varphi\right\rangle \right\vert =\left\Vert \varphi\right\Vert
\leq1$. By Lemma \ref{lemOS0}, $\varphi$ is a positive functional. Thus
$S_{1}^{\infty}=S\left(  \mathcal{M}^{+}\right)  $ is the state space of the
cone $\mathcal{M}^{+}$. Hence $e$ is a unit for every $L^{p}\left(
\mathcal{M},\tau\right)  $, $1\leq p\leq\infty$.

Now put $L^{p}\left(  \mathcal{M},\tau\right)  _{+}$ to be the set of those
$x\in L^{p}\left(  \mathcal{M},\tau\right)  _{h}$ such that $\left\langle
x,S_{1}^{p}\right\rangle \geq0$. If $p=\infty$ then $L^{\infty}\left(
\mathcal{M},\tau\right)  _{+}=\left\{  x\in\mathcal{M}_{h}:\left\langle
x,S\left(  \mathcal{M}_{+}\right)  \right\rangle \geq0\right\}  =\mathcal{M}%
_{+}$ (see \cite[7.1]{DSbM}). If $p<\infty$ then $\varepsilon=1>1/\left\Vert
e\right\Vert _{p}$ defines the separated, closed, unital, cone $L^{p}\left(
\mathcal{M},\tau\right)  _{+}$ in $L^{p}\left(  \mathcal{M},\tau\right)  $,
$S\left(  L^{p}\left(  \mathcal{M},\tau\right)  _{+}\right)  =S_{1}^{p}$ and
$\left(  L^{p}\left(  \mathcal{M},\tau\right)  ,L^{p}\left(  \mathcal{M}%
,\tau\right)  _{+}\right)  $ is a function system on $S_{1}^{p}$ thanks to
Theorem \ref{thFS1}.

If $p=2$ then the norm dual space $L^{2}\left(  \mathcal{M},\tau\right)
^{\ast}$ is canonically identified with the conjugate space $\overline
{L^{2}\left(  \mathcal{M},\tau\right)  }$, and for every $y\in L^{2}\left(
\mathcal{M},\tau\right)  _{he}^{\ast}$ we have the orthogonal expansion
$y=y_{0}+\tau\left(  e\right)  ^{-1}e$ with $y_{0}\perp e$ (or $\tau\left(
y\right)  =0$). Then $\left\Vert y\right\Vert _{2}^{2}=\left\Vert
y_{0}\right\Vert _{2}^{2}+\tau\left(  e\right)  ^{-1}$, and $y\in S_{1}^{2}$
iff $\left\Vert y_{0}\right\Vert _{2}\leq\left(  \left(  \tau\left(  e\right)
-1\right)  /\tau\left(  e\right)  \right)  ^{1/2}$. Hence
\begin{align*}
S_{1}^{2}  &  =\left(  \left(  \tau\left(  e\right)  -1\right)  /\tau\left(
e\right)  \right)  ^{1/2}\operatorname{ball}L^{2}\left(  \mathcal{M}%
,\tau\right)  _{h}^{e}+\tau\left(  e\right)  ^{-1}e\\
&  =\tau\left(  e\right)  ^{-1/2}\left(  \left(  \tau\left(  e\right)
-1\right)  ^{1/2}\operatorname{ball}L^{2}\left(  \mathcal{M},\tau\right)
_{h}^{e}+e_{2}\right)  ,
\end{align*}
where $L^{2}\left(  \mathcal{M},\tau\right)  _{h}^{e}=\left\{  w\in
L^{2}\left(  \mathcal{M},\tau\right)  _{h}:\tau\left(  w\right)  =0\right\}  $
is the real vector subspace of $\overline{L^{2}\left(  \mathcal{M}%
,\tau\right)  }$ orthogonal to $e$, and $e_{2}=\tau\left(  e\right)  ^{-1/2}e$
is the unit vector of $L^{2}\left(  \mathcal{M},\tau\right)  $. Thus $x\in
L^{2}\left(  \mathcal{M},\tau\right)  _{+}$ with its orthonormal expansion
$x=x_{0}+\left(  x,e_{2}\right)  e_{2}$ iff $\left(  x,\left(  \tau\left(
e\right)  -1\right)  ^{1/2}\operatorname{ball}L^{2}\left(  \mathcal{M}%
,\tau\right)  _{h}^{e}+e_{2}\right)  \geq0$, which is equivalent to%
\begin{align*}
\left(  \tau\left(  e\right)  -1\right)  ^{1/2}\left\Vert x_{0}\right\Vert
_{2}  &  =\left(  \tau\left(  e\right)  -1\right)  ^{1/2}\left(
\vee\left\vert \left(  x_{0},\operatorname{ball}L^{2}\left(  \mathcal{M}%
,\tau\right)  _{h}^{e}\right)  \right\vert \right)  \leq\left(  x,e_{2}\right)
\\
\text{or }\left\Vert x_{0}\right\Vert _{2}  &  \leq\left(  \tau\left(
e\right)  -1\right)  ^{-1/2}\left(  x,e_{2}\right)  .
\end{align*}
Hence $L^{2}\left(  \mathcal{M},\tau\right)  _{+}=\mathfrak{c}_{\tau}$. If we
choose $S_{\varepsilon}^{2}$ for $\varepsilon=\left(  2/\tau\left(  e\right)
\right)  ^{1/2}>1/\sqrt{\tau\left(  e\right)  }$ instead of $S_{1}^{2}$ then
as in the proof of Theorem \ref{thFS1}, the projective positivity will exactly
be reduced to the one from \cite{Dfaa19} of the unital Hilbert $\ast$-space
$\left(  L^{2}\left(  \mathcal{M},\tau\right)  ,e_{2}\right)  $.

Finally, let us prove the equality for the state space $S\left(  L^{1}\left(
\mathcal{M},\tau\right)  _{+}\right)  $, which is a convex subset of
$\mathcal{M}_{h}$. Suppose that $y\in\mathcal{M}_{h}$ admits the expansion
$y=\left(  1+r\right)  z-rw$ with $z,w\in S_{\tau}$, $zw=0$ and $\left\Vert
z+r\left(  z+w\right)  \right\Vert \leq1$. Then $\tau\left(  y\right)
=\left(  1+r\right)  \tau\left(  z\right)  -r\tau\left(  w\right)  =1+r-r=1$,
$\left\vert y\right\vert =\left(  1+r\right)  z+rw$ and $\left\Vert
y\right\Vert =\left\Vert \left\vert y\right\vert \right\Vert =\left\Vert
z+r\left(  z+w\right)  \right\Vert \leq1$, that is, $y\in S_{1}^{1}$.
Conversely, take $y\in S_{1}^{1}$. Using the spectral decomposition of $y$ in
$\mathcal{M}$, we deduce that $y=y_{+}-y_{-}=\tau\left(  y_{+}\right)
z-\tau\left(  y_{-}\right)  w$ with $z,w\in S_{\tau}$ and $\tau\left(
y_{+}\right)  -\tau\left(  y_{-}\right)  =1$. If $r=\tau\left(  y_{-}\right)
$ then $y=\left(  1+r\right)  z-rw$ with $z,w\in S_{\tau}$, $zw=0$ and
$\left\Vert z+r\left(  z+w\right)  \right\Vert =\left\Vert \left\vert
y\right\vert \right\Vert =\left\Vert y\right\Vert \leq1$. Hence
\[
S\left(  L^{1}\left(  \mathcal{M},\tau\right)  _{+}\right)  =\left\{  \left(
1+r\right)  z-rw:z,w\in S_{\tau},zw=0,\left\Vert z+r\left(  z+w\right)
\right\Vert \leq1\right\}  .
\]
Pick $y\in S\left(  L^{1}\left(  \mathcal{M},\tau\right)  _{+}\right)  $ with
$\left(  1+r\right)  z+rw=\left\vert y\right\vert \leq e$. Then $1+2r=\left(
1+r\right)  \tau\left(  z\right)  +r\tau\left(  w\right)  =\tau\left(
\left\vert y\right\vert \right)  \leq\tau\left(  e\right)  $ or $r\leq\left(
\tau\left(  e\right)  -1\right)  /2$. Taking into account the canonical
inclusions $L^{1}\left(  \mathcal{M},\tau\right)  \subseteq L^{1}\left(
\mathcal{M},\tau\right)  ^{\ast\ast}=\mathcal{M}^{\ast}$ and $S_{\tau
}\subseteq S_{\ast}\left(  \mathcal{M}_{+}\right)  \subseteq S\left(
\mathcal{M}_{+}\right)  \subseteq\mathcal{M}^{\ast}$, we deduce that
\begin{align*}
S\left(  L^{1}\left(  \mathcal{M},\tau\right)  _{+}\right)   &  \subseteq
\left\{  \left(  1+r\right)  z-rw:z,w\in S_{\tau},0\leq r\leq\left(
\tau\left(  e\right)  -1\right)  /2\right\} \\
&  \subseteq\left\{  \left(  1+r\right)  \phi-r\psi:\phi,\psi\in S\left(
\mathcal{M}_{+}\right)  ,0\leq r\leq\left(  \tau\left(  e\right)  -1\right)
/2\right\} \\
&  =S\left(  \mathcal{M}_{\tau\left(  e\right)  }^{+}\right)  \subseteq
S\left(  \mathcal{M}_{\varepsilon}^{+}\right)
\end{align*}
for all $\varepsilon\geq\tau\left(  e\right)  $ by virtue of Lemma
\ref{lemEp1}. In particular,
\begin{align*}
\mathcal{M}_{\varepsilon}^{+}  &  =\left\{  x\in\mathcal{M}_{h}:\left\langle
x,S\left(  \mathcal{M}_{\varepsilon}^{+}\right)  \right\rangle \geq0\right\}
\subseteq\left\{  x\in\mathcal{M}_{h}:\left\langle x,S\left(  L^{1}\left(
\mathcal{M},\tau\right)  _{+}\right)  \right\rangle \geq0\right\} \\
&  =\left\{  x\in\mathcal{M}_{h}:\tau\left(  xS\left(  L^{1}\left(
\mathcal{M},\tau\right)  _{+}\right)  \right)  \geq0\right\}  =L^{1}\left(
\mathcal{M},\tau\right)  _{+}\cap\mathcal{M},
\end{align*}
that is, $\mathcal{M}_{\varepsilon}^{+}\subseteq L^{1}\left(  \mathcal{M}%
,\tau\right)  _{+}\cap\mathcal{M}$ for all $\varepsilon\geq\tau\left(
e\right)  $.
\end{proof}

\begin{remark}
Notice that the spectrum of every state $y=\left(  1+r\right)  z-rw\in
S\left(  L^{1}\left(  \mathcal{M},\tau\right)  _{+}\right)  $ can be described
in term of the density elements $z,w\in S_{\tau}$, $zw=0$ and $\left\Vert
z+r\left(  z+w\right)  \right\Vert \leq1$. Since $wz=\left(  zw\right)
^{\ast}=0=zw$, one can use the joint spectral mapping theorem \cite{Tay}.
Namely, $\sigma\left(  y\right)  =\left\{  \left(  1+r\right)  \lambda
-r\mu:\left(  \lambda,\mu\right)  \in\sigma\left(  z,w\right)  \right\}  $,
where $\sigma\left(  z,w\right)  $ is the joint Taylor spectrum of commuting
elements $z$ and $w$, which is reduced to the joint point spectrum \cite{CTg}
in the case of a matrix algebra $\mathcal{M=}M_{n}$ (see Theorem \ref{thMP1}).
But $\left\{  0\right\}  =\sigma\left(  zw\right)  =\left\{  \lambda
\mu:\left(  \lambda,\mu\right)  \in\sigma\left(  z,w\right)  \right\}  $,
therefore for every nonzero $\lambda\in\sigma\left(  z\right)  $ we have
$\left(  \lambda,0\right)  \in\sigma\left(  z,w\right)  $ and $\left(
1+r\right)  \lambda=\left(  1+r\right)  \lambda-r0\in\sigma\left(  y\right)
$. Similarly, $-r\mu\in\sigma\left(  y\right)  $ for every nonzero $\mu
\in\sigma\left(  w\right)  $. Thus $\sigma\left(  y\right)  =\left(
1+r\right)  \sigma\left(  z\right)  \cup\left(  -r\right)  \sigma\left(
w\right)  $ and $\sigma\left(  \left\vert y\right\vert \right)  =\left(
1+r\right)  \sigma\left(  z\right)  \cup r\sigma\left(  w\right)  $.
\end{remark}

\begin{theorem}
\label{thFVNA1}Let $\mathcal{M}$ be a finite von Neumann algebra with its unit
$e$ and faithful, finite, normal trace $\tau$ such that $\tau\left(  e\right)
>1$. If $1\leq p<l\leq\infty$ then $L^{l}\left(  \mathcal{M},\tau\right)
_{\varepsilon}^{+}\subseteq L^{p}\left(  \mathcal{M},\tau\right)  _{+}$ for
all $\varepsilon\geq\tau\left(  e\right)  ^{\left(  l-p\right)  /lp}$. Thus
\[
\mathcal{M}_{\varepsilon}^{+}\subseteq L^{p}\left(  \mathcal{M},\tau\right)
_{r}^{+}\subseteq L^{1}\left(  \mathcal{M},\tau\right)  _{+}\text{,\quad
}\varepsilon\tau\left(  e\right)  ^{-1/p}\geq r\geq\tau\left(  e\right)
^{1/q}.
\]
In particular, $\mathcal{M}_{\varepsilon}^{+}\subseteq L^{p}\left(
\mathcal{M},\tau\right)  _{+}$ whenever $\varepsilon\geq\tau\left(  e\right)
^{1/p}$.
\end{theorem}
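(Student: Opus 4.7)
The plan is to reduce the cone inclusions to inclusions of state spaces, and then invoke the continuous embeddings of the noncommutative $L^{q}$-spaces. The key tool is the dual norm comparison, obtained by the same Hölder computation that produced (\ref{iball}): for $1 \leq q' < q \leq \infty$ (equivalently $1 \leq p < l \leq \infty$ by conjugacy), one has
\[
\|y\|_{q'} \leq \tau(e)^{1/q' - 1/q}\|y\|_{q} = \tau(e)^{(l-p)/lp}\|y\|_{q}
\]
for every $y \in L^{q}(\mathcal{M},\tau)$; thus $L^{q}(\mathcal{M},\tau) \hookrightarrow L^{q'}(\mathcal{M},\tau)$ continuously with norm at most $\tau(e)^{(l-p)/lp}$.

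With this at hand I first establish the more flexible statement $L^{l}(\mathcal{M},\tau)_{\varepsilon}^{+} \subseteq L^{p}(\mathcal{M},\tau)_{r}^{+}$ whenever $\varepsilon \geq r\,\tau(e)^{(l-p)/lp}$. Take $y \in S_{r}^{p}$, so $y \in L^{q}(\mathcal{M},\tau)_{h}$, $\tau(y) = 1$ and $\|y\|_{q} \leq r$. Viewed as an element of $L^{q'}(\mathcal{M},\tau)_{h}$, it still satisfies $\tau(y) = 1$ and now $\|y\|_{q'} \leq r\,\tau(e)^{(l-p)/lp} \leq \varepsilon$, hence $y \in S_{\varepsilon}^{l}$. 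This produces the state-space containment $S_{r}^{p} \subseteq S_{\varepsilon}^{l}$. Given $v \in L^{l}(\mathcal{M},\tau)_{\varepsilon}^{+}$, Theorem \ref{thFS1} together with Proposition \ref{propFVNA1} tells us $\tau(vy) \geq 0$ for all $y \in S_{\varepsilon}^{l}$. Restricting to $y \in S_{r}^{p}$ and using the compatibility of the trace pairings $L^{l} \times L^{q'}$ and $L^{p} \times L^{q}$ (both extend by continuity the algebraic trace on $\mathcal{M}$), combined with $v \in L^{l} \subseteq L^{p}$ from (\ref{iball}), yields $v \in L^{p}(\mathcal{M},\tau)_{r}^{+}$. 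The first assertion of the theorem is the specialization $r = 1$.

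The sandwich inclusions then follow by applying this general statement twice. Choosing $(p,l) = (p,\infty)$, for which $(l-p)/lp = 1/p$, gives $\mathcal{M}_{\varepsilon}^{+} = L^{\infty}(\mathcal{M},\tau)_{\varepsilon}^{+} \subseteq L^{p}(\mathcal{M},\tau)_{r}^{+}$ whenever $r \leq \varepsilon\,\tau(e)^{-1/p}$. Choosing $(p,l) = (1,p)$ with $r$ playing the role of the outer parameter and $1$ the inner one gives $L^{p}(\mathcal{M},\tau)_{r}^{+} \subseteq L^{1}(\mathcal{M},\tau)_{+}$ whenever $r \geq \tau(e)^{(p-1)/p} = \tau(e)^{1/q}$. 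Intersecting the two ranges produces exactly $\varepsilon\,\tau(e)^{-1/p} \geq r \geq \tau(e)^{1/q}$, which is the stated sandwich. The final \emph{in particular} clause is not a specialization of this sandwich but is obtained directly from the first assertion with $l = \infty$ and $r = 1$: $\mathcal{M}_{\varepsilon}^{+} \subseteq L^{p}(\mathcal{M},\tau)_{+}$ whenever $\varepsilon \geq \tau(e)^{1/p}$. The only mildly delicate point in the write-up will be to justify that the two trace pairings agree on the overlap, but this is immediate from the continuity of each pairing and the density of $\mathcal{M}$ in the relevant completions.
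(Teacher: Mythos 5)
Your proposal is correct and follows essentially the same route as the paper: both reduce the cone inclusions to state-space inclusions $S_{r}^{p}\subseteq S_{\varepsilon}^{l}$ (and $S_{1}^{1}\subseteq S_{r}^{p}\subseteq S_{\varepsilon}^{\infty}$) obtained from the H\"older-type embedding (\ref{iball}), and then use that the cones are defined by nonnegativity on these state spaces, together with Lemma \ref{lemEp1} for $\mathcal{M}_{\varepsilon}^{+}$ and the consistency of the trace pairings. Packaging the argument as one parametrized inclusion $L^{l}\left(\mathcal{M},\tau\right)_{\varepsilon}^{+}\subseteq L^{p}\left(\mathcal{M},\tau\right)_{r}^{+}$ for $\varepsilon\geq r\,\tau\left(e\right)^{\left(l-p\right)/lp}$ applied three times is only a cosmetic reorganization of the paper's proof, including the observation that the final clause comes from the case $l=\infty$, $r=1$ rather than from the sandwich.
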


\begin{proof}
Fix $1\leq p<l\leq\infty$ and the related conjugate couples $\left(
p,q\right)  $ and $\left(  l,m\right)  $ with $1\leq m<q\leq\infty$. By
Proposition \ref{propFVNA1}, every $L^{p}\left(  \mathcal{M},\tau\right)  $ is
a function system equipped with the cone $L^{p}\left(  \mathcal{M}%
,\tau\right)  _{+}$ (responded to $\varepsilon=1>1/\left\Vert e\right\Vert
_{p}$), and using (\ref{iball}), we deduce that $\operatorname{ball}%
L^{q}\left(  \mathcal{M},\tau\right)  \subseteq\tau\left(  e\right)  ^{\left(
q-m\right)  /qm}\operatorname{ball}L^{m}\left(  \mathcal{M},\tau\right)  $,
$L^{l}\left(  \mathcal{M},\tau\right)  \subseteq L^{p}\left(  \mathcal{M}%
,\tau\right)  $ and
\[
S_{1}^{p}=L^{p}\left(  \mathcal{M},\tau\right)  _{he}^{\ast}\cap
\operatorname{ball}L^{q}\left(  \mathcal{M},\tau\right)  \subseteq
L^{l}\left(  \mathcal{M},\tau\right)  _{he}^{\ast}\cap\varepsilon
\operatorname{ball}L^{m}\left(  \mathcal{M},\tau\right)  =S_{\varepsilon}^{l}%
\]
for all $\varepsilon\geq\tau\left(  e\right)  ^{\left(  q-m\right)  /qm}%
=\tau\left(  e\right)  ^{\left(  l-p\right)  /lp}\geq1$. It follows that
$L^{l}\left(  \mathcal{M},\tau\right)  _{\varepsilon}^{+}\subseteq
L^{p}\left(  \mathcal{M},\tau\right)  _{+}$ for all $\varepsilon\geq
\tau\left(  e\right)  ^{\left(  l-p\right)  /lp}$. If $l=\infty$ then
$L^{\infty}\left(  \mathcal{M},\tau\right)  _{\varepsilon}^{+}=\mathcal{M}%
_{\varepsilon}^{+}$ by Proposition \ref{propFVNA1}, and $\left(  l-p\right)
/lp=1/p$. In particular, $\mathcal{M}_{\varepsilon}^{+}\subseteq L^{p}\left(
\mathcal{M},\tau\right)  _{+}$ for all $\varepsilon\geq\tau\left(  e\right)
^{1/p}$.

Finally, for every $\varepsilon\geq\tau\left(  e\right)  $ take $r$ with
$\varepsilon\tau\left(  e\right)  ^{-1/p}\geq r\geq\tau\left(  e\right)
^{1/q}$. Using again (\ref{iball}), we derive that $\operatorname{ball}%
L^{\infty}\left(  \mathcal{M},\tau\right)  \subseteq\tau\left(  e\right)
^{1/q}\operatorname{ball}L^{q}\left(  \mathcal{M},\tau\right)  $,
$\operatorname{ball}L^{q}\left(  \mathcal{M},\tau\right)  \subseteq\tau\left(
e\right)  ^{\left(  q-1\right)  /q}\operatorname{ball}L^{1}\left(
\mathcal{M},\tau\right)  $ and $L^{\infty}\left(  \mathcal{M},\tau\right)
\subseteq L^{p}\left(  \mathcal{M},\tau\right)  \subseteq L^{1}\left(
\mathcal{M},\tau\right)  $. It follows that
\begin{align*}
S_{1}^{1}  &  =L^{1}\left(  \mathcal{M},\tau\right)  _{he}^{\ast}%
\cap\operatorname{ball}L^{\infty}\left(  \mathcal{M},\tau\right)  \subseteq
L^{p}\left(  \mathcal{M},\tau\right)  _{he}^{\ast}\cap r\operatorname{ball}%
L^{q}\left(  \mathcal{M},\tau\right)  =S_{r}^{p}\\
&  \subseteq L^{\infty}\left(  \mathcal{M},\tau\right)  _{he}^{\ast}\cap
r\tau\left(  e\right)  ^{1/p}\operatorname{ball}L^{1}\left(  \mathcal{M}%
,\tau\right)  \subseteq\mathcal{M}_{he}^{\ast}\cap\varepsilon
\operatorname{ball}L^{1}\left(  \mathcal{M},\tau\right) \\
&  \subseteq\mathcal{M}_{he}^{\ast}\cap\varepsilon\operatorname{ball}%
\mathcal{M}^{\ast}=S_{\varepsilon}^{\infty},
\end{align*}
which in turn implies that $\mathcal{M}_{\varepsilon}^{+}\subseteq
L^{p}\left(  \mathcal{M},\tau\right)  _{r}^{+}\subseteq L^{1}\left(
\mathcal{M},\tau\right)  _{+}$.
\end{proof}

\subsection{The concluding remarks}

\textbf{1. }As we have seen above in Section \ref{secLP} the projective
$L^{p}$-cones and the original cones of the (commutative) $L^{p}$-spaces are
not comparable apart from the finite dimensional case. The same happens in the
case of a finite von Neumann algebra. Note that the inclusion $L^{1}\left(
\mathcal{M},\tau\right)  _{+}\cap\mathcal{M\subseteq M}_{+}$ may not be true
(see Theorem \ref{thmainComm}). The following concrete example presents an
interest. Suppose that $\mathcal{M=}L^{\infty}\left[  0,2\right]  $ is the
abelian von Neumann algebra of Lebesgue measure $dt$ on the compact interval
$\left[  0,2\right]  $ equipped with the trace $\tau\left(  x\right)
=\int_{0}^{2}x\left(  t\right)  dt$. Notice that $e=1\in L^{1}\left[
0,2\right]  $, $\left\Vert e\right\Vert _{1}=\int dt=2>1$, and%
\[
S_{\varepsilon}^{1}=\left\{  y\in\varepsilon\operatorname{ball}L^{\infty
}\left[  0,2\right]  _{h}:\int ydt=1\right\}
\]
for $\varepsilon\geq1$. For every $0<r\leq-2^{-1}+\sqrt{1/\varepsilon+1/4}$
define $x=-r\left[  0,r\right]  +\left[  r,2\right]  \in L^{1}\left[
0,2\right]  $. If $y\in S_{\varepsilon}^{1}$ then $\left\vert y\left(
t\right)  \right\vert \leq\varepsilon$ for almost all $t$, and
\begin{align*}
\tau\left(  xy\right)   &  =-r\int_{0}^{r}ydt+\int_{r}^{2}ydt=-r\int_{0}%
^{r}ydt+1-\int_{0}^{r}ydt=1-\left(  1+r\right)  \int_{0}^{r}ydt\\
&  \geq1-\left(  1+r\right)  \int_{0}^{r}\varepsilon dt=1-\varepsilon
r-\varepsilon r^{2}\geq0.
\end{align*}
Thus $x\in L^{1}\left[  0,2\right]  _{+}\cap L^{\infty}\left[  0,2\right]  $
whereas $x\notin L^{\infty}\left[  0,2\right]  _{+}$. Thus an essentially
bounded $L^{1}\left[  0,2\right]  _{+}$-positive function could take negative
values on a set of positive measure.

\textbf{2. }The projective positivity allows us to define the operator system
sums and the projective tensor product of the operator systems.

\section{Appendix: Functionals on operator systems\label{SecApp1}}

In this section we support the paper with a supplementary material on operator system.

\subsection{Positive functionals on operator systems}

Let $V$ be an operator system $V$ on a Hilbert space $H$. Put $V_{h}^{2}%
=V_{h}\times V_{h}$ with its subset $V_{+}^{2}=V_{+}\times V_{+}$. For $a\in
V_{h}^{2}$, $b\in V$ and $\varepsilon>0$, we put $a+\varepsilon=\left(
a_{1}+\varepsilon e,a_{2}+\varepsilon e\right)  \in V_{h}^{2}$ and
\[
x\left(  a,b\right)  =\left[
\begin{array}
[c]{cc}%
a_{1} & b\\
b^{\ast} & a_{2}%
\end{array}
\right]  \in M_{2}\left(  V\right)  _{h}\subseteq\mathcal{B}\left(
H^{2}\right)  .
\]
If $a\in V_{+}^{2}$, we use the notation $x_{a,\varepsilon,b}=\left(
a_{1}+\varepsilon e\right)  ^{-1/2}b\left(  a_{2}+\varepsilon e\right)
^{-1/2}\in\mathcal{B}\left(  H\right)  $. The following Paulsen's trick is
well known.

\begin{lemma}
\label{lemTBT1}The matrix $x\left(  a,b\right)  $ is positive iff $a\in
V_{+}^{2}$ and $x_{a,\varepsilon,b}\in\operatorname{ball}\mathcal{B}\left(
H\right)  $ for all $\varepsilon>0$.
\end{lemma}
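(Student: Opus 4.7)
The plan is to reduce the positivity of the $2\times 2$ block matrix $x(a,b)$ to the standard scalar fact that for any $c\in\mathcal{B}(H)$ one has
\[
\begin{bmatrix} e & c \\ c^{\ast} & e \end{bmatrix}\geq 0 \quad\Longleftrightarrow\quad \|c\|\leq 1,
\]
together with conjugation by invertible diagonal operator matrices, which is a positivity-preserving operation.

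For the forward implication, I would first obtain $a\in V_{+}^{2}$ by compressing $x(a,b)$ to its diagonal corners: for every $\zeta\in H$, $\langle x(a,b)(\zeta\oplus 0),\zeta\oplus 0\rangle=\langle a_{1}\zeta,\zeta\rangle\geq 0$, and similarly $a_{2}\geq 0$. Given $\varepsilon>0$, the matrix
\[
x(a+\varepsilon,b)=x(a,b)+\varepsilon I_{H^{2}}\geq\varepsilon I_{H^{2}}>0
\]
has strictly positive, hence invertible, diagonal blocks $a_{j}+\varepsilon e$. Conjugating by the self-adjoint diagonal block $D_{\varepsilon}=\operatorname{diag}((a_{1}+\varepsilon e)^{-1/2},(a_{2}+\varepsilon e)^{-1/2})$ preserves positivity and yields
\[
D_{\varepsilon}\,x(a+\varepsilon,b)\,D_{\varepsilon}=\begin{bmatrix} e & x_{a,\varepsilon,b} \\ x_{a,\varepsilon,b}^{\ast} & e \end{bmatrix}\geq 0,
\]
which by the scalar fact forces $\|x_{a,\varepsilon,b}\|\leq 1$.

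For the reverse direction, I would run the same conjugation the other way. If $a\in V_{+}^{2}$ and $\|x_{a,\varepsilon,b}\|\leq 1$, then the block matrix above is positive, and conjugating by the invertible $D_{\varepsilon}^{-1}=\operatorname{diag}((a_{1}+\varepsilon e)^{1/2},(a_{2}+\varepsilon e)^{1/2})$ returns $x(a+\varepsilon,b)\geq 0$ in $\mathcal{B}(H^{2})$. Since this holds for every $\varepsilon>0$ and the positive cone of $\mathcal{B}(H^{2})$ is norm-closed, letting $\varepsilon\downarrow 0$ gives $x(a,b)\geq 0$.

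The only nontrivial ingredient is the scalar equivalence stated at the beginning; I would prove it in passing by expanding, for $\zeta,\eta\in H$,
\[
\Bigl\langle \begin{bmatrix} e & c \\ c^{\ast} & e \end{bmatrix}(\zeta\oplus\eta),\zeta\oplus\eta\Bigr\rangle=\|\zeta\|^{2}+\|\eta\|^{2}+2\operatorname{Re}\langle c\eta,\zeta\rangle,
\]
and noting that nonnegativity of this expression for all $\zeta,\eta$ is equivalent to $|\langle c\eta,\zeta\rangle|\leq\|\zeta\|\,\|\eta\|$, i.e.\ to $\|c\|\leq 1$. This is the main (and only) real obstacle; everything else is routine manipulation of block matrices.
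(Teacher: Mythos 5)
Your proposal is correct and follows essentially the same route as the paper: extract $a\in V_{+}^{2}$ from the diagonal corners, conjugate $x(a+\varepsilon,b)$ by $\operatorname{diag}\bigl((a_{1}+\varepsilon e)^{-1/2},(a_{2}+\varepsilon e)^{-1/2}\bigr)$ to reduce to the standard criterion for $\begin{bmatrix} e & c\\ c^{\ast} & e\end{bmatrix}\geq 0$, and reverse via the inverse conjugation together with norm-closedness of the positive cone as $\varepsilon\downarrow 0$. The only (harmless) difference is that you prove the $2\times 2$ criterion directly by expanding the quadratic form, where the paper cites it from the literature.
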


\begin{proof}
If $x\left(  a,b\right)  \in M_{2}\left(  V\right)  _{+}$ then $a_{i}%
=\epsilon_{i}x\left(  a,b\right)  \epsilon_{i}^{\ast}\in V_{+}$ and
$a+\varepsilon\in V_{+}^{2}$ for all $\varepsilon>0$, where $\epsilon
_{1}=\left[
\begin{array}
[c]{cc}%
1 & 0
\end{array}
\right]  $, $\epsilon_{2}=\left[
\begin{array}
[c]{cc}%
0 & 1
\end{array}
\right]  $. In particular, $x\left(  \left(  a+\varepsilon\right)
^{-1/2},0\right)  =\left(  a_{1}+\varepsilon e\right)  ^{-1/2}\oplus\left(
a_{2}+\varepsilon e\right)  ^{-1/2}\geq0$ and $x\left(  a+\varepsilon
,b\right)  =x\left(  a,b\right)  +\varepsilon\left(  e\oplus e\right)  \geq0$.
It follows that
\[
\left[
\begin{array}
[c]{cc}%
e & x_{a,\varepsilon,b}\\
x_{a,\varepsilon,b}^{\ast} & e
\end{array}
\right]  =x\left(  \left(  a+\varepsilon\right)  ^{-1/2},0\right)  x\left(
a+\varepsilon,b\right)  x\left(  \left(  a+\varepsilon\right)  ^{-1/2}%
,0\right)  \geq0,
\]
which means that $\left\Vert x_{a,\varepsilon,b}\right\Vert \leq1$ (see
\cite[1.3.2]{ER}). Conversely, if $a\in\left(  V_{+}\right)  ^{2}$ and
$x_{a,\varepsilon,b}\in\operatorname{ball}\mathcal{B}\left(  H\right)  $, then
as above
\[
x\left(  a+\varepsilon,b\right)  =x\left(  \left(  a+\varepsilon\right)
^{1/2},0\right)  \left[
\begin{array}
[c]{cc}%
e & x_{a,\varepsilon,b}\\
x_{a,\varepsilon,b}^{\ast} & e
\end{array}
\right]  x\left(  \left(  a+\varepsilon\right)  ^{1/2},0\right)  \geq0
\]
for all $\varepsilon>0$. By the continuity argument, we deduce that $x\left(
a,b\right)  \geq0$.
\end{proof}

\begin{corollary}
\label{corTBT1}Let $\alpha=\left(  \alpha_{1},\alpha_{2}\right)  \in
\mathbb{R}^{2}$ and $\beta\in\mathbb{C}$. Then
\[
x\left(  \alpha,\beta\right)  \in M_{2,+}\Longleftrightarrow\alpha
\in\mathbb{R}_{+}^{2}\text{ and }\left\vert \beta\right\vert \leq\alpha
_{1}^{1/2}\alpha_{2}^{1/2}.
\]

\end{corollary}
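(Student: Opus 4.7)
The plan is to specialize Lemma \ref{lemTBT1} to the trivial operator system $V=\mathbb{C}\subseteq\mathcal{B}(\mathbb{C})$, in which case $V_{h}=\mathbb{R}$, $V_{+}=\mathbb{R}_{+}$, $e=1$, and the Paulsen matrix $x(\alpha,\beta)$ lies in $M_{2}(\mathbb{C})=M_{2}(V)$. Under this identification, both hypotheses and conclusions of Lemma \ref{lemTBT1} collapse to elementary scalar conditions, so the corollary should follow with essentially no additional work.

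For the forward implication, suppose $x(\alpha,\beta)\in M_{2,+}$. Lemma \ref{lemTBT1} immediately gives $\alpha\in V_{+}^{2}=\mathbb{R}_{+}^{2}$, and moreover forces the scalar
\[
x_{\alpha,\varepsilon,\beta}=(\alpha_{1}+\varepsilon)^{-1/2}\,\beta\,(\alpha_{2}+\varepsilon)^{-1/2}\in\mathbb{C}
\]
to satisfy $|x_{\alpha,\varepsilon,\beta}|\leq1$ for every $\varepsilon>0$. Rearranging, $|\beta|^{2}\leq(\alpha_{1}+\varepsilon)(\alpha_{2}+\varepsilon)$ for all $\varepsilon>0$, and letting $\varepsilon\downarrow0$ yields $|\beta|^{2}\leq\alpha_{1}\alpha_{2}$, i.e., $|\beta|\leq\alpha_{1}^{1/2}\alpha_{2}^{1/2}$.

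For the reverse implication, assume $\alpha\in\mathbb{R}_{+}^{2}$ and $|\beta|\leq\alpha_{1}^{1/2}\alpha_{2}^{1/2}$. Then for every $\varepsilon>0$,
\[
|\beta|^{2}\leq\alpha_{1}\alpha_{2}\leq(\alpha_{1}+\varepsilon)(\alpha_{2}+\varepsilon),
\]
so $|x_{\alpha,\varepsilon,\beta}|\leq1$, i.e., $x_{\alpha,\varepsilon,\beta}\in\operatorname{ball}\mathcal{B}(\mathbb{C})$. Applying Lemma \ref{lemTBT1} in the other direction, we conclude $x(\alpha,\beta)\in M_{2,+}$.

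There is no genuine obstacle: the corollary is a direct scalar specialization of the Paulsen-trick lemma just proved. The only minor point worth noting is that one must let $\varepsilon\downarrow0$ in the forward direction to pass from the strict positivity of the denominators to the sharp inequality $|\beta|^{2}\leq\alpha_{1}\alpha_{2}$, which is precisely the continuity step already used in the proof of Lemma \ref{lemTBT1} itself.
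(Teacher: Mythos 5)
Your proof is correct and follows essentially the same route as the paper: both specialize Lemma \ref{lemTBT1} to the scalar case $V=\mathbb{C}$, translate the condition $\left\vert x_{\alpha,\varepsilon,\beta}\right\vert \leq 1$ into $\left\vert \beta\right\vert^{2}\leq\left(\alpha_{1}+\varepsilon\right)\left(\alpha_{2}+\varepsilon\right)$, and let $\varepsilon\downarrow 0$. No further comment is needed.
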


\begin{proof}
By Lemma \ref{lemTBT1}, $x\left(  \alpha,\beta\right)  \in M_{2,+}$ iff
$\alpha\in\mathbb{R}_{+}^{2}$ and $\left\vert x_{\alpha,\varepsilon,\beta
}\right\vert \leq1$ for all $\varepsilon>0$. The latter means that $\left\vert
\beta\right\vert \leq\left(  \alpha_{1}+\varepsilon1\right)  ^{1/2}\left(
\alpha_{2}+\varepsilon1\right)  ^{1/2}$ for all $\varepsilon>0$. Thus
$\left\vert \beta\right\vert \leq\alpha_{1}^{1/2}\alpha_{2}^{1/2}$.\bigskip
\end{proof}

Based on Corollary \ref{corTBT1}, we can provide more elegant proof of
Cauchy-Schwarz inequality.

\begin{corollary}
\label{corTBT2}Let $\mathcal{A}$ be a unital $C^{\ast}$-algebra and let
$\mu:\mathcal{A\rightarrow}\mathbb{C}$ be a positive functional. Then
\[
\left\vert \mu\left(  a_{1}^{\ast}a_{2}\right)  \right\vert \leq\mu\left(
a_{1}^{\ast}a_{1}\right)  ^{1/2}\mu\left(  a_{2}^{\ast}a_{2}\right)  ^{1/2}%
\]
for all $a_{1},a_{2}\in\mathcal{A}$.
\end{corollary}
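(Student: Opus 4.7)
The plan is to realize Cauchy--Schwarz as a direct consequence of Corollary \ref{corTBT1} applied to the scalar $2\times 2$ matrix obtained by pushing forward a canonical positive $2\times 2$ matrix over $\mathcal{A}$ through $\mu$. The elegance promised in the statement comes from bypassing the usual quadratic-discriminant computation: positivity of the $2\times 2$ scalar matrix is exactly what Corollary \ref{corTBT1} interprets as the desired inequality.

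First, I would form the matrix
\[
X=\begin{bmatrix} a_{1}^{\ast}\\ a_{2}^{\ast}\end{bmatrix}\begin{bmatrix} a_{1} & a_{2}\end{bmatrix}=\begin{bmatrix} a_{1}^{\ast}a_{1} & a_{1}^{\ast}a_{2}\\ a_{2}^{\ast}a_{1} & a_{2}^{\ast}a_{2}\end{bmatrix}\in M_{2}(\mathcal{A}),
\]
which is manifestly positive since it is of the form $Y^{\ast}Y$ (via the inclusion $M_{2}(\mathcal{A})\subseteq\mathcal{B}(H\oplus H)$ for a faithful representation of $\mathcal{A}$).

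Next, I would verify that the scalar matrix
\[
\mu_{2}(X)=\begin{bmatrix} \mu(a_{1}^{\ast}a_{1}) & \mu(a_{1}^{\ast}a_{2})\\ \mu(a_{2}^{\ast}a_{1}) & \mu(a_{2}^{\ast}a_{2})\end{bmatrix}
\]
is positive in $M_{2}$. This is immediate: for every $(\alpha,\beta)\in\mathbb{C}^{2}$,
\[
\bigl[\begin{matrix}\overline{\alpha} & \overline{\beta}\end{matrix}\bigr]\,\mu_{2}(X)\,\begin{bmatrix}\alpha\\ \beta\end{bmatrix}=\mu\bigl((\alpha a_{1}+\beta a_{2})^{\ast}(\alpha a_{1}+\beta a_{2})\bigr)\geq 0,
\]
because $(\alpha a_{1}+\beta a_{2})^{\ast}(\alpha a_{1}+\beta a_{2})\in\mathcal{A}_{+}$ and $\mu$ is positive. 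Note also that $\mu(a_{2}^{\ast}a_{1})=\mu(a_{1}^{\ast}a_{2})^{\ast}$ since $\mu=\mu^{\ast}$ as a positive functional.

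Finally, I would apply Corollary \ref{corTBT1} to $\mu_{2}(X)$ with $\alpha=(\mu(a_{1}^{\ast}a_{1}),\mu(a_{2}^{\ast}a_{2}))\in\mathbb{R}_{+}^{2}$ and $\beta=\mu(a_{1}^{\ast}a_{2})\in\mathbb{C}$; the inequality $|\beta|\leq\alpha_{1}^{1/2}\alpha_{2}^{1/2}$ delivered by that corollary is exactly the Cauchy--Schwarz inequality. There is no real obstacle here; the only thing to verify carefully is the quadratic-form computation above, which is the substance of the argument, together with the implicit (and standard) fact that a positive functional is hermitian, which was already observed in Subsection \ref{subsecOS22}.
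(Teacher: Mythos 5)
Your argument is correct and is essentially the paper's own proof: you form the scalar matrix with entries $\mu\left(a_{i}^{\ast}a_{j}\right)$, verify its positivity through the quadratic-form identity $\mu\left(\left(\alpha a_{1}+\beta a_{2}\right)^{\ast}\left(\alpha a_{1}+\beta a_{2}\right)\right)\geq0$, and then invoke Corollary \ref{corTBT1}, exactly as the paper does. The preliminary observation that $X=Y^{\ast}Y$ is positive in $M_{2}\left(\mathcal{A}\right)$ is harmless but not actually used, since positivity of the scalar matrix is checked directly.
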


\begin{proof}
Put $\alpha=\left(  \mu\left(  a_{1}^{\ast}a_{1}\right)  ,\mu\left(
a_{2}^{\ast}a_{2}\right)  \right)  \in\mathbb{R}_{+}^{2}$ and $\beta
=\mu\left(  a_{1}^{\ast}a_{2}\right)  $. Then $x\left(  \alpha,\beta\right)
\in M_{2}=\mathcal{B}\left(  \mathbb{C}^{2}\right)  $ and
\[
\left(  x\left(  \alpha,\beta\right)  \zeta,\zeta\right)  =\sum_{i,j}%
\mu\left(  a_{i}^{\ast}a_{j}\right)  \zeta_{i}^{\ast}\zeta_{j}=\mu\left(
\left(  a_{1}\zeta_{1}+a_{2}\zeta_{2}\right)  ^{\ast}\left(  a_{1}\zeta
_{1}+a_{2}\zeta_{2}\right)  \right)  \geq0
\]
for all $\zeta=\left(  \zeta_{1},\zeta_{2}\right)  \in\mathbb{C}^{2}$. Thus
$x\left(  \alpha,\beta\right)  \in M_{2,+}$. Using Corollary \ref{corTBT1}, we
conclude that $\left\vert \mu\left(  a_{1}^{\ast}a_{2}\right)  \right\vert
=\left\vert \beta\right\vert \leq\alpha_{1}^{1/2}\alpha_{2}^{1/2}=\mu\left(
a_{1}^{\ast}a_{1}\right)  ^{1/2}\mu\left(  a_{2}^{\ast}a_{2}\right)  ^{1/2}$.
\end{proof}

Now consider the case of an operator system with a positive functional defined
on it.

\begin{lemma}
\label{lemOS0}Let $\mu:V\rightarrow\mathbb{C}$ be a linear functional. Then
$\mu$ is positive iff $\left\Vert \mu\right\Vert =\left\langle e,\mu
\right\rangle $.
\end{lemma}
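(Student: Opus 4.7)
The statement is the standard characterization of positivity in terms of norm, so both directions are largely mechanical, but one subtle point (nailing down the imaginary part of $\mu(a)$) requires a complex shift.

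\textbf{Forward direction.} Assume $\mu\in V_{+}^{\ast}$. The text already observes that $\mu=\mu^{\ast}$ automatically. For any $x\in V_{h}$ with $\left\Vert x\right\Vert \leq1$ I would invoke the order inequality $-e\leq x\leq e$ recorded in Subsection \ref{subsecOS22} (since $\left\Vert x\right\Vert e\pm x\in V_{+}$); applying $\mu$ gives $-\mu(e)\leq\mu(x)\leq\mu(e)$, so $\left\vert \left\langle x,\mu\right\rangle \right\vert \leq\mu(e)$. By formula (\ref{nHf}) this shows $\left\Vert \mu\right\Vert \leq\mu(e)$, while $\mu(e)\leq\left\Vert \mu\right\Vert \left\Vert e\right\Vert =\left\Vert \mu\right\Vert $ is trivial because $\left\Vert e\right\Vert =1$ in an operator system. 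Hence $\left\Vert \mu\right\Vert =\left\langle e,\mu\right\rangle $.

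\textbf{Reverse direction, real part.} Assume $\left\Vert \mu\right\Vert =\left\langle e,\mu\right\rangle $. Then $\mu(e)\geq0$ is a real number; if $\mu(e)=0$ then $\mu=0$ and positivity is trivial, so assume $\mu(e)>0$. Pick $a\in V_{+}$; by homogeneity I may assume $\left\Vert a\right\Vert \leq1$, so $0\leq a\leq e$ and consequently $0\leq e-a\leq e$, giving $\left\Vert e-a\right\Vert \leq1$. Writing $\mu(a)=s+it$ with $s,t\in\mathbb{R}$, the inequality $\left\vert \mu(e-a)\right\vert \leq\left\Vert \mu\right\Vert =\mu(e)$ yields $(\mu(e)-s)^{2}+t^{2}\leq\mu(e)^{2}$, from which $0\leq s\leq2\mu(e)$. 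So the real part of $\mu(a)$ is already nonnegative; it remains to rule out the imaginary part.

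\textbf{Reverse direction, imaginary part.} This is the one delicate step, and the main obstacle. The point is to exploit that $a$ is a \emph{positive} operator so that $a+i\lambda e$ is normal for every $\lambda\in\mathbb{R}$: then
\[
\left\Vert a+i\lambda e\right\Vert ^{2}=\left\Vert (a-i\lambda e)(a+i\lambda e)\right\Vert =\left\Vert a^{2}+\lambda^{2}e\right\Vert \leq1+\lambda^{2},
\]
using $0\leq a^{2}\leq e$. Applying $\mu$ gives $\left\vert s+i(t+\lambda\mu(e))\right\vert ^{2}\leq\mu(e)^{2}(1+\lambda^{2})$, i.e.\ $s^{2}+t^{2}+2t\lambda\mu(e)\leq\mu(e)^{2}$ for every $\lambda\in\mathbb{R}$. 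Since $\mu(e)>0$, letting $\lambda\rightarrow\pm\infty$ forces $t=0$. Combined with the previous paragraph, $\mu(a)=s\geq0$, so $\mu\in V_{+}^{\ast}$.

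The only nontrivial ingredient beyond the order-theoretic manipulations of Subsection \ref{subsecOS22} is the normality computation $\left\Vert a+i\lambda e\right\Vert \leq\sqrt{1+\lambda^{2}}$; avoiding it with the weaker triangle-inequality bound $\left\Vert a+i\lambda e\right\Vert \leq1+|\lambda|$ only yields $\left\vert t\right\vert \leq1$, which is insufficient. That is the step I would present most carefully.
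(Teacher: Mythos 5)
Your proof is correct and takes essentially the same route as the paper: the crux in both is the bound $\left\Vert x+i\lambda e\right\Vert ^{2}\leq\left\Vert x\right\Vert ^{2}+\lambda^{2}$ for hermitian $x$, followed by letting $\lambda\rightarrow\pm\infty$ to force the imaginary part of $\mu$ to vanish, with only cosmetic differences (you get the bound from the $C^{\ast}$-identity via $\left(  a-i\lambda e\right)  \left(  a+i\lambda e\right)  =a^{2}+\lambda^{2}e$, the paper from the Gelfand picture $C\left(  \sigma\left(  x\right)  \right)  $, and you work directly with $a\in V_{+}$ using $\left\Vert e-a\right\Vert \leq1$ where the paper first shows $\mu=\mu^{\ast}$ on all of $V_{h}$ and then uses $x=2y-e$). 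The forward direction, via $-e\leq x\leq e$ and (\ref{nHf}), coincides with the paper's.
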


\begin{proof}
Suppose $\mu$ is positive. Since $-e\leq\operatorname{ball}V_{h}\leq e$, it
follows that $\left\Vert \mu\right\Vert =\vee\left\vert \left\langle
\operatorname{ball}V_{h},\mu\right\rangle \right\vert \leq\left\langle
e,\mu\right\rangle \leq\left\Vert \mu\right\Vert $ thanks to (\ref{nHf}). Thus
$\left\Vert \mu\right\Vert =\left\langle e,\mu\right\rangle $.

Conversely, suppose $\left\Vert \mu\right\Vert =\left\langle e,\mu
\right\rangle \neq0$. Take $x\in V_{h}$. Taking into account that $\left\vert
\gamma-it\right\vert \leq\sqrt{r^{2}+t^{2}}$, $t\in\mathbb{R}$ whenever
$-r\leq\gamma\leq r$, we conclude that $\left\Vert x+ine\right\Vert ^{2}%
\leq\left\Vert x\right\Vert ^{2}+n^{2}$ (just use the local isomorphism of the
unital $C^{\ast}$-algebra generated by $x$ onto the $C^{\ast}$-algebra
$C\left(  \sigma\left(  x\right)  \right)  $ over the spectrum $\sigma\left(
x\right)  $). Then
\begin{align*}
2n\left\vert \operatorname{Im}\left\langle x,\mu\right\rangle \right\vert  &
=2n\left\Vert \mu\right\Vert \left\vert \operatorname{Im}\left\langle
x,\mu\right\rangle \right\vert \left\Vert \mu\right\Vert ^{-1}\leq\left(
\left\vert \left\langle x,\mu\right\rangle \right\vert ^{2}+2n\left\Vert
\mu\right\Vert \left\vert \operatorname{Im}\left\langle x,\mu\right\rangle
\right\vert \right)  \left\Vert \mu\right\Vert ^{-1}\\
&  =\left(  \left(  \operatorname{Re}\left\langle x,\mu\right\rangle \right)
^{2}+\left(  \operatorname{Im}\left\langle x,\mu\right\rangle +n\left\Vert
\mu\right\Vert \right)  ^{2}-n^{2}\left\Vert \mu\right\Vert ^{2}\right)
\left\Vert \mu\right\Vert ^{-1}\\
&  =\left\vert \left\langle x,\mu\right\rangle +in\left\Vert \mu\right\Vert
\right\vert ^{2}\left\Vert \mu\right\Vert ^{-1}-n^{2}\left\Vert \mu\right\Vert
=\left\vert \left\langle x,\mu\right\rangle +in\left\langle e,\mu\right\rangle
\right\vert ^{2}\left\Vert \mu\right\Vert ^{-1}-n^{2}\left\Vert \mu\right\Vert
\\
&  =\left\vert \left\langle x+ine,\mu\right\rangle \right\vert ^{2}\left\Vert
\mu\right\Vert ^{-1}-n^{2}\left\Vert \mu\right\Vert \\
&  \leq\left\Vert x+ine\right\Vert ^{2}\left\Vert \mu\right\Vert
-n^{2}\left\Vert \mu\right\Vert \leq\left(  \left\Vert x\right\Vert ^{2}%
+n^{2}\right)  \left\Vert \mu\right\Vert -n^{2}\left\Vert \mu\right\Vert
=\left\Vert x\right\Vert ^{2}\left\Vert \mu\right\Vert ,
\end{align*}
which means that $\operatorname{Im}\left\langle x,\mu\right\rangle =0$. Thus
$\left\langle V_{h},\mu\right\rangle \subseteq\mathbb{R}$, that is, $\mu
=\mu^{\ast}$ and $-\left\langle e,\mu\right\rangle \leq\left\langle
x,\mu\right\rangle \leq\left\langle e,\mu\right\rangle $ whenever $x\in V_{h}$
with $-e\leq x\leq e$. Take $y\in V_{+}$, $0\leq y\leq e$. Then $-e\leq x\leq
e$ for $x=2y-e\in V_{h}$, which in turn implies that $\left\langle
y,\mu\right\rangle =2^{-1}\left(  \left\langle x,\mu\right\rangle
+\left\langle e,\mu\right\rangle \right)  \geq0$, that is, $\mu$ is positive.
\end{proof}

\subsection{Orthogonal expansions of a hermitian functional}

In the case of $V=C\left(  \mathcal{T}\right)  $ for a compact Hausdorff
topological space $\mathcal{T}$, we have $V^{\ast}=M\left(  \mathcal{T}%
\right)  $ to be the space of all Radon charges on $\mathcal{T}$. The real
vector space $M\left(  \mathcal{T}\right)  _{h}$ turns out to be a complete
lattice \cite[4.2 Theorem 18]{Sw}. In particular, every $\mu\in M\left(
\mathcal{T}\right)  _{h}$ admits the orthogonal (unique) expansion $\mu
=\mu_{+}-\mu_{-}$ into a difference of Radon measures $\mu_{+},\mu_{-}\in
M\left(  \mathcal{T}\right)  _{+}$ such that $\mu_{+}\wedge\mu_{-}=0$ and
$\left\Vert \mu\right\Vert =\left\Vert \mu_{+}\right\Vert +\left\Vert \mu
_{-}\right\Vert $. In the case of an operator system $V$ and $\mu\in
V_{h}^{\ast}$ we say that $\mu$ admits \textit{an orthogonal expansion} if
$\mu=\mu_{1}-\mu_{2}$ for some $\mu_{1},\mu_{2}\in V_{+}^{\ast}$ such that
$\left\Vert \mu\right\Vert =\left\Vert \mu_{1}\right\Vert +\left\Vert \mu
_{2}\right\Vert $. The presence of such expansions will be proven below.

\begin{lemma}
\label{corOSS1}Let $V$ be an operator system and let $\mu\in V_{h}^{\ast}$,
$\mu=\mu_{1}-\mu_{2}$ with $\mu_{1},\mu_{2}\in V_{+}^{\ast}$. Then $\mu
=\mu_{1}-\mu_{2}$ is an orthogonal expansion iff for every $\varepsilon>0$
there corresponds $y\in\operatorname{ball}V_{+}$ such that $\left\langle
e-y,\mu_{1}\right\rangle <\varepsilon$ and $\left\langle y,\mu_{2}%
\right\rangle <\varepsilon$.
\end{lemma}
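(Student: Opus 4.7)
The plan is to pivot everything through the affine bijection
\[
\operatorname{ball}V_{h}\longleftrightarrow\operatorname{ball}V_{+},\qquad x\longleftrightarrow y=\tfrac{1}{2}(x+e),
\]
which is well defined because $x\in V_{h}$ with $\left\Vert x\right\Vert \le1$ is equivalent to $-e\le x\le e$, hence $0\le y\le e$, i.e.\ $y\in\operatorname{ball}V_{+}$, and conversely. Combined with the identity $\left\Vert \mu_{i}\right\Vert =\left\langle e,\mu_{i}\right\rangle $ for the positive functionals $\mu_{i}$ (Lemma~\ref{lemOS0}), this will let me convert norm-estimates for $\mu$ into the required estimates involving $e-y$ and $y$, and vice versa. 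The computational heart is the algebraic identity
\[
\left\langle 2y-e,\mu\right\rangle =\left\Vert \mu_{1}\right\Vert +\left\Vert \mu_{2}\right\Vert -2\left\langle e-y,\mu_{1}\right\rangle -2\left\langle y,\mu_{2}\right\rangle ,
\]
obtained by expanding $\mu=\mu_{1}-\mu_{2}$ and using $\left\langle e,\mu_{i}\right\rangle =\left\Vert \mu_{i}\right\Vert $.

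For the forward direction assume $\left\Vert \mu\right\Vert =\left\Vert \mu_{1}\right\Vert +\left\Vert \mu_{2}\right\Vert $. Fix $\varepsilon>0$. By (\ref{nHf}) there exists $x\in\operatorname{ball}V_{h}$ with $\left\vert \left\langle x,\mu\right\rangle \right\vert >\left\Vert \mu\right\Vert -\varepsilon$; replacing $x$ by $-x$ if necessary (both lie in $\operatorname{ball}V_{h}$, and $\left\langle V_{h},\mu\right\rangle \subseteq\mathbb{R}$ since $\mu\in V_{h}^{\ast}$), I may arrange $\left\langle x,\mu\right\rangle >\left\Vert \mu\right\Vert -\varepsilon$. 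Set $y=\tfrac{1}{2}(x+e)\in\operatorname{ball}V_{+}$. The displayed identity then yields
\[
2\left\langle e-y,\mu_{1}\right\rangle +2\left\langle y,\mu_{2}\right\rangle =\left\Vert \mu_{1}\right\Vert +\left\Vert \mu_{2}\right\Vert -\left\langle x,\mu\right\rangle =\left\Vert \mu\right\Vert -\left\langle x,\mu\right\rangle <\varepsilon .
\]
Since both summands on the left are nonnegative (positivity of $\mu_{1},\mu_{2}$ together with $e-y,y\in V_{+}$), each is strictly less than $\varepsilon$. Rescaling the choice of $\varepsilon$ at the outset produces the required $y$ for every prescribed tolerance.

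For the converse, suppose that for each $\varepsilon>0$ there is $y\in\operatorname{ball}V_{+}$ with $\left\langle e-y,\mu_{1}\right\rangle <\varepsilon$ and $\left\langle y,\mu_{2}\right\rangle <\varepsilon$. Put $x=2y-e\in\operatorname{ball}V_{h}$. The same identity gives
\[
\left\langle x,\mu\right\rangle >\left\Vert \mu_{1}\right\Vert +\left\Vert \mu_{2}\right\Vert -4\varepsilon ,
\]
so by (\ref{nHf}) we get $\left\Vert \mu\right\Vert \ge\left\Vert \mu_{1}\right\Vert +\left\Vert \mu_{2}\right\Vert -4\varepsilon$. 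Letting $\varepsilon\downarrow0$ and combining with the trivial bound $\left\Vert \mu\right\Vert \le\left\Vert \mu_{1}\right\Vert +\left\Vert \mu_{2}\right\Vert $ gives the orthogonality of the expansion.

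There is no real obstacle here; the only thing to handle carefully is the sign/hermitian bookkeeping when choosing $x$ in the forward direction (ensuring $\left\langle x,\mu\right\rangle $, not merely $\left\vert \left\langle x,\mu\right\rangle \right\vert $, is close to $\left\Vert \mu\right\Vert $), which is why I use (\ref{nHf}) restricted to hermitian test elements together with the fact that $\operatorname{ball}V_{h}$ is symmetric. Everything else reduces to the single algebraic identity and the characterization $\left\Vert \mu_{i}\right\Vert =\left\langle e,\mu_{i}\right\rangle $ of positive functionals from Lemma~\ref{lemOS0}.
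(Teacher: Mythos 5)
Your proof is correct and follows essentially the same route as the paper: both arguments rest on the correspondence $x=2y-e$ between $\operatorname{ball}V_{h}$ and $\operatorname{ball}V_{+}$, the identity $\left\Vert \mu_{i}\right\Vert =\left\langle e,\mu_{i}\right\rangle$ from Lemma \ref{lemOS0}, and (\ref{nHf}) to pick a hermitian contraction nearly norming $\mu$, with the same $4\varepsilon$ bookkeeping in the converse. Your packaging of the computation as a single algebraic identity is just a tidier presentation of the paper's rearrangement, not a different method.
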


\begin{proof}
Suppose $\mu=\mu_{1}-\mu_{2}$ is an orthogonal expansion. Using Lemma
\ref{lemOS0}, we deduce that
\[
\mu_{1}\left(  e\right)  +\mu_{2}\left(  e\right)  -\varepsilon=\left\Vert
\mu_{1}\right\Vert +\left\Vert \mu_{2}\right\Vert -\varepsilon=\left\Vert
\mu\right\Vert -\varepsilon\leq\mu_{1}\left(  x\right)  -\mu_{2}\left(
x\right)
\]
for some $x\in\operatorname{ball}V_{h}$, that is, $\mu_{1}\left(  e-x\right)
+\mu_{2}\left(  e+x\right)  \leq\varepsilon$. But $0\leq e-x\leq2e$ and $0\leq
e+x\leq2e$, therefore $y=2^{-1}\left(  e+x\right)  \in\operatorname{ball}%
V_{+}$, $e-y=2^{-1}\left(  e-x\right)  \in\operatorname{ball}V_{+}$ and
$0\leq\mu_{1}\left(  e-y\right)  +\mu_{2}\left(  y\right)  \leq2^{-1}%
\varepsilon<\varepsilon$.

Conversely, suppose the above stated property holds for the expansion $\mu
=\mu_{1}-\mu_{2}$. Take $\varepsilon>0$ and the related $y\in
\operatorname{ball}V_{+}$. Since $-e\leq2y-e\leq e$, it follows from Lemma
\ref{lemOS0} that
\begin{align*}
\left\Vert \mu_{1}\right\Vert +\left\Vert \mu_{2}\right\Vert  &  =\mu
_{1}\left(  e\right)  +\mu_{2}\left(  e\right)  =\mu_{1}\left(  2y-e\right)
+2\mu_{1}\left(  e-y\right)  +\mu_{2}\left(  e-2y\right)  +2\mu_{2}\left(
y\right) \\
&  <\mu_{1}\left(  2y-e\right)  +\mu_{2}\left(  e-2y\right)  +4\varepsilon
=\mu\left(  2y-e\right)  +4\varepsilon\leq\left\Vert \mu\right\Vert
+4\varepsilon.
\end{align*}
Consequently, $\left\Vert \mu_{1}\right\Vert +\left\Vert \mu_{2}\right\Vert
\leq\left\Vert \mu\right\Vert $, which means that $\mu=\mu_{1}-\mu_{2}$ is an
orthogonal expansion.
\end{proof}

\begin{corollary}
\label{corOSS2}Let $\mathcal{A}$ be a unital $C^{\ast}$-algebra and let
$\mu\in\mathcal{A}_{h}^{\ast}$. The orthogonal expansion of $\mu$ is unique.
\end{corollary}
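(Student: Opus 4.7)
The plan is as follows. Suppose $\mu = \mu_1 - \mu_2 = \nu_1 - \nu_2$ are two orthogonal expansions in $\mathcal{A}_{h}^{\ast}$; the goal will be to show $\mu_1 = \nu_1$, which then forces $\mu_2 = \nu_2$. First I will extract norm information: from the identity $\mu_1 + \nu_2 = \nu_1 + \mu_2$ between positive functionals, the additivity $\left\Vert \alpha+\beta\right\Vert = \alpha(e)+\beta(e)$ on $V_{+}^{\ast}$ (Lemma \ref{lemOS0}), and the two orthogonality identities $\left\Vert \mu\right\Vert = \left\Vert \mu_1\right\Vert +\left\Vert \mu_2\right\Vert = \left\Vert \nu_1\right\Vert +\left\Vert \nu_2\right\Vert$, a short arithmetic step will yield $\left\Vert \mu_1\right\Vert = \left\Vert \nu_1\right\Vert$ and $\left\Vert \mu_2\right\Vert = \left\Vert \nu_2\right\Vert$.

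The heart of the argument is to use Lemma \ref{corOSS1} to build a common approximate support. Fixing $\varepsilon > 0$, I pick $y \in \operatorname{ball}\mathcal{A}_{+}$ with $\mu_1(e-y) < \varepsilon$ and $\mu_2(y) < \varepsilon$, and evaluate the equality $\mu_1(y) - \mu_2(y) = \nu_1(y) - \nu_2(y)$. Since the left-hand side exceeds $\left\Vert \mu_1\right\Vert -2\varepsilon = \left\Vert \nu_1\right\Vert -2\varepsilon$ while $\nu_1(y)\le \left\Vert \nu_1\right\Vert$ and $\nu_2(y)\ge 0$, the same $y$ automatically satisfies $\nu_1(e-y)\le 2\varepsilon$ and $\nu_2(y)\le 2\varepsilon$. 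With this common $y$ in hand, for any $a \in \mathcal{A}$ with $0\le a\le e$ I expand
\[
a = yay + (e-y)ay + ya(e-y) + (e-y)a(e-y),
\]
use $(e-y)a(e-y)\le (e-y)^{2} \le e-y$ and $yay\le y^{2} \le y$ together with Cauchy--Schwarz (Corollary \ref{corTBT2}, applied with $p=a^{1/2}(e-y)$ and $q=a^{1/2}y$) to obtain
\[
|\mu_1((e-y)ay)|^{2} \le \mu_1((e-y)a(e-y))\,\mu_1(yay) \le \varepsilon\,\left\Vert \mu_1\right\Vert ,
\]
hence $|\mu_1(a)-\mu_1(yay)| = O(\sqrt{\varepsilon})$, and identically $|\nu_1(a)-\nu_1(yay)| = O(\sqrt{\varepsilon})$ thanks to the bound $\nu_1(e-y)\le 2\varepsilon$.

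Finally, evaluating $\mu_1 - \mu_2 = \nu_1 - \nu_2$ at $yay$ and using $yay\le y$ together with the bounds on $\mu_2(y)$, $\nu_2(y)$ will give $|\mu_1(yay)-\nu_1(yay)| = |\mu_2(yay) - \nu_2(yay)| \le 3\varepsilon$. Adding the three estimates yields $|\mu_1(a)-\nu_1(a)| = O(\sqrt{\varepsilon})$ uniformly in $a\in \operatorname{ball}\mathcal{A}_{+}$, and letting $\varepsilon\downarrow 0$ forces $\mu_1 = \nu_1$ on $\operatorname{ball}\mathcal{A}_{+}$, hence on all of $\mathcal{A}$ via $\mathcal{A} = (\mathcal{A}_{+}-\mathcal{A}_{+}) + i(\mathcal{A}_{+}-\mathcal{A}_{+})$. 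The main obstacle is the coupling step: arranging that a single element $y$ simultaneously approximates the support projections for both expansions, and then controlling the cross-term $\mu_1((e-y)ay)$. Both pieces genuinely use the multiplicative structure of $\mathcal{A}$ through Cauchy--Schwarz, which is precisely why the uniqueness is stated for $C^{\ast}$-algebras and need not hold for a general operator system.
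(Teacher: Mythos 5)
Your proposal is correct and follows essentially the same route as the paper: Lemma \ref{corOSS1} supplies an approximate support $y$ for the first expansion, the identity $\mu_1-\mu_2=\nu_1-\nu_2$ together with Lemma \ref{lemOS0} transfers the bounds $\nu_1(e-y),\nu_2(y)=O(\varepsilon)$ to the second expansion, and Cauchy--Schwarz (Corollary \ref{corTBT2}) turns these into $O(\sqrt{\varepsilon})$ estimates forcing $\mu_1=\nu_1$. The only cosmetic difference is the final splitting: the paper uses the one-sided decomposition $x=xy+x(e-y)$ for arbitrary $x\in\operatorname{ball}\mathcal{A}$, which avoids your four-corner decomposition of positive $a\leq e$ and the subsequent extension by linearity.
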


\begin{proof}
Suppose $\mu=\mu_{1}-\mu_{2}=\tau_{1}-\tau_{2}$ are the orthogonal expansions
of $\mu$. By Lemma \ref{corOSS1}, for $\varepsilon>0$ there is $y\in
\operatorname{ball}\mathcal{A}_{+}$ such that $\mu_{1}\left(  e-y\right)
<\varepsilon$ and $\mu_{2}\left(  y\right)  <\varepsilon$. Then
\begin{align*}
\tau_{1}\left(  y\right)   &  \geq\tau_{1}\left(  y\right)  -\tau_{2}\left(
y\right)  =\mu_{1}\left(  y\right)  -\mu_{2}\left(  y\right)  >\mu_{1}\left(
e\right)  -\varepsilon-\mu_{2}\left(  y\right)  >\mu_{1}\left(  e\right)
-2\varepsilon,\\
\tau_{2}\left(  e-y\right)   &  \geq\tau_{2}\left(  e-y\right)  -\tau
_{1}\left(  e-y\right)  =\mu_{2}\left(  e-y\right)  -\mu_{1}\left(
e-y\right)  >\mu_{2}\left(  e\right)  -2\varepsilon.
\end{align*}
By Lemma \ref{lemOS0}, we deduce that
\begin{align*}
\tau_{1}\left(  y\right)  +\tau_{2}\left(  e-y\right)   &  >\mu_{1}\left(
e\right)  +\mu_{2}\left(  e\right)  -4\varepsilon=\left\Vert \mu
_{1}\right\Vert +\left\Vert \mu_{2}\right\Vert -4\varepsilon=\left\Vert
\tau_{1}\right\Vert +\left\Vert \tau_{2}\right\Vert -4\varepsilon\\
&  =\tau_{1}\left(  e\right)  +\tau_{2}\left(  e\right)  -4\varepsilon,
\end{align*}
that is, $\tau_{1}\left(  e-y\right)  +\tau_{2}\left(  y\right)
<4\varepsilon$. Consequently, $\mu_{1}\left(  e-y\right)  <\varepsilon$,
$\mu_{2}\left(  y\right)  <\varepsilon$, $\tau_{1}\left(  e-y\right)
<4\varepsilon$ and $\tau_{2}\left(  y\right)  <4\varepsilon$. Using Corollary
\ref{corTBT2}, we deduce that
\begin{align*}
\left\vert \mu_{2}\left(  xy\right)  \right\vert  &  \leq\mu_{2}\left(
xx^{\ast}\right)  ^{1/2}\mu_{2}\left(  y^{2}\right)  ^{1/2}\leq\left\Vert
\mu_{2}\right\Vert ^{1/2}\left\Vert x\right\Vert \mu_{2}\left(  y\right)
^{1/2}\leq\left\Vert \mu\right\Vert ^{1/2}\varepsilon^{1/2},\\
\left\vert \tau_{2}\left(  xy\right)  \right\vert  &  \leq\left\Vert \tau
_{2}\right\Vert ^{1/2}\left\Vert x\right\Vert \tau_{2}\left(  y\right)
^{1/2}\leq\left\Vert \mu\right\Vert ^{1/2}2\varepsilon^{1/2},\\
\left\vert \mu_{1}\left(  x\left(  e-y\right)  \right)  \right\vert  &
\leq\left\Vert \mu_{1}\right\Vert ^{1/2}\left\Vert x\right\Vert \mu_{1}\left(
e-y\right)  ^{1/2}\leq\left\Vert \mu\right\Vert ^{1/2}\varepsilon^{1/2},\\
\left\vert \tau_{1}\left(  x\left(  e-y\right)  \right)  \right\vert  &
\leq\left\Vert \tau_{1}\right\Vert ^{1/2}\left\Vert x\right\Vert \tau
_{1}\left(  e-y\right)  ^{1/2}\leq\left\Vert \mu\right\Vert ^{1/2}%
2\varepsilon^{1/2}%
\end{align*}
for all $x\in\operatorname{ball}\mathcal{A}$. But $\mu_{1}-\tau_{1}=\mu
_{2}-\tau_{2}$, therefore
\begin{align*}
\mu_{1}\left(  x\right)  -\tau_{1}\left(  x\right)   &  =\mu_{1}\left(
xy\right)  -\tau_{1}\left(  xy\right)  +\mu_{1}\left(  x\left(  e-y\right)
\right)  -\tau_{1}\left(  x\left(  e-y\right)  \right) \\
&  =\mu_{2}\left(  xy\right)  -\tau_{2}\left(  xy\right)  +\mu_{1}\left(
x\left(  e-y\right)  \right)  -\tau_{1}\left(  x\left(  e-y\right)  \right)  .
\end{align*}
It yields that $\left\vert \mu_{1}\left(  x\right)  -\tau_{1}\left(  x\right)
\right\vert \leq6\left\Vert \mu\right\Vert ^{1/2}\varepsilon^{1/2}$. Whence
$\mu_{1}=\tau_{1}$ and $\mu_{2}=\tau_{2}$.
\end{proof}

Now we can prove an operator system (or noncommutative) version of an
orthogonal expansion result for hermitian functionals on an operator system.

\begin{theorem}
\label{thOS1}Let $V$ be an operator system and let $\mu\in V_{h}^{\ast}$. Then
$\mu$ admits an orthogonal expansion $\mu=\mu_{+}-\mu_{-}$, $\mu_{+},\mu
_{-}\in V_{+}^{\ast}$, $\left\Vert \mu\right\Vert =\left\Vert \mu
_{+}\right\Vert +\left\Vert \mu_{-}\right\Vert $, which may not be unique. If
$V=\mathcal{A}$ is a unital $C^{\ast}$-algebra then every $\mu\in
\mathcal{A}_{h}^{\ast}$ admits a unique orthogonal expansion.
\end{theorem}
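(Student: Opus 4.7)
The plan is to reduce the problem to the ambient $C^{\ast}$-algebra $\mathcal{B}(H)$ via a Hahn-Banach extension, apply the classical Jordan decomposition there, and then restrict back to $V$. Uniqueness in the $C^{\ast}$-algebra case is already handled by Corollary \ref{corOSS2}, while non-uniqueness in the general operator system case comes from the flexibility of Hahn-Banach extensions and can be exhibited on a small example.

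Given $\mu\in V_{h}^{\ast}$, the first step is to apply the real Hahn-Banach theorem to the real linear functional $\mu|V_{h}\colon V_{h}\rightarrow\mathbb{R}$ to obtain a norm-preserving real linear extension to $\mathcal{B}(H)_{h}$; complexifying then yields $\tilde{\mu}\in\mathcal{B}(H)_{h}^{\ast}$ restricting to $\mu$. Applying (\ref{nHf}) in both $V$ and $\mathcal{B}(H)$ gives $\left\Vert \tilde{\mu}\right\Vert =\left\Vert \tilde{\mu}|\mathcal{B}(H)_{h}\right\Vert =\left\Vert \mu|V_{h}\right\Vert =\left\Vert \mu\right\Vert $. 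The second step is to invoke the classical Jordan decomposition for hermitian functionals on the $C^{\ast}$-algebra $\mathcal{B}(H)$, producing $\tilde{\mu}=\tilde{\mu}_{+}-\tilde{\mu}_{-}$ with $\tilde{\mu}_{\pm}\in\mathcal{B}(H)_{+}^{\ast}$ and $\left\Vert \tilde{\mu}\right\Vert =\left\Vert \tilde{\mu}_{+}\right\Vert +\left\Vert \tilde{\mu}_{-}\right\Vert $. Setting $\mu_{\pm}:=\tilde{\mu}_{\pm}|V$ gives $\mu_{\pm}\in V_{+}^{\ast}$ with $\mu=\mu_{+}-\mu_{-}$. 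Lemma \ref{lemOS0} applied to each $\mu_{\pm}$ on $V$ and to each $\tilde{\mu}_{\pm}$ on $\mathcal{B}(H)$ yields $\left\Vert \mu_{\pm}\right\Vert =\mu_{\pm}(e)=\tilde{\mu}_{\pm}(e)=\left\Vert \tilde{\mu}_{\pm}\right\Vert $, so $\left\Vert \mu_{+}\right\Vert +\left\Vert \mu_{-}\right\Vert =\left\Vert \tilde{\mu}\right\Vert =\left\Vert \mu\right\Vert $, and together with the trivial inequality $\left\Vert \mu\right\Vert \leq\left\Vert \mu_{+}\right\Vert +\left\Vert \mu_{-}\right\Vert $ this delivers the orthogonal expansion.

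Uniqueness in the unital $C^{\ast}$-algebra case is just Corollary \ref{corOSS2}. Non-uniqueness in the operator system setting is expected because different Hahn-Banach extensions $\tilde{\mu}$ of the same $\mu$ generally have different Jordan decompositions, whose positive parts, upon restriction to $V$, can remain distinct when $V$ is a proper unital self-adjoint subspace of a $C^{\ast}$-algebra; an explicit example can be built in a low-dimensional operator subsystem of $M_{n}$ where the cone $V_{+}$ is strictly smaller than $M_{n}^{+}$ and hence admits additional positive functionals not detected by the ambient decomposition. The main obstacle is the Jordan decomposition in $\mathcal{B}(H)$ itself, which is the classical Grothendieck-Takesaki theorem; within the present framework it can be proved either by minimizing $\left\Vert \phi_{+}\right\Vert +\left\Vert \phi_{-}\right\Vert $ over all decompositions $\tilde{\mu}=\phi_{+}-\phi_{-}$ with $\phi_{\pm}\in\mathcal{B}(H)_{+}^{\ast}$ and checking orthogonality of the minimizers via Lemma \ref{corOSS1}, or by appealing to the standard operator algebra reference. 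A secondary subtlety is ensuring that the Hahn-Banach extension preserves the hermitian property, which is handled by extending the real restriction $\mu|V_{h}$ first and only then complexifying.
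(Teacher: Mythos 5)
Your existence argument is correct but follows a genuinely different route from the paper. You extend $\mu$ from $V$ to the ambient noncommutative algebra $\mathcal{B}(H)$ (extending the real functional $\mu|V_{h}$ and complexifying, which indeed preserves hermiticity and, by (\ref{nHf}), the norm) and then invoke the Jordan--Grothendieck decomposition of hermitian functionals on the $C^{\ast}$-algebra $\mathcal{B}(H)$; restricting the positive parts to $V$ and computing norms via Lemma \ref{lemOS0} gives the norm additivity exactly as you say. The paper instead goes in the commutative direction: it uses Kadison's representation $\Phi:V\rightarrow C\left(S\left(V_{+}\right)\right)$ together with Proposition \ref{propAp1} (so that $\left\Vert \mu\right\Vert_{e}=\left\Vert \mu\right\Vert$ for hermitian $\mu$), extends by Hahn--Banach to a Radon charge on the compact state space, replaces it by its real part, and applies the elementary Hahn--Jordan decomposition in $M\left(S\left(V_{+}\right)\right)_{h}$. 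The trade-off is that your route imports the nontrivial Grothendieck theorem for noncommutative $C^{\ast}$-algebras (fine if cited; your sketched alternative of minimizing $\left\Vert \phi_{+}\right\Vert +\left\Vert \phi_{-}\right\Vert$ does not obviously work, since showing that the minimum equals $\left\Vert \tilde{\mu}\right\Vert$ is precisely the content of that theorem and Lemma \ref{corOSS1} does not supply it), whereas the paper only needs the classical commutative lattice fact. The uniqueness statement for unital $C^{\ast}$-algebras is handled identically in both, via Corollary \ref{corOSS2}.

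There is, however, a genuine gap: the clause ``which may not be unique'' is part of the theorem, and you do not prove it. Saying that different Hahn--Banach extensions ``generally'' have different Jordan decompositions whose restrictions ``can remain distinct'' is a plausibility argument, not a construction; a priori the restrictions to $V$ of distinct ambient decompositions could coincide, so non-uniqueness requires an explicit witness. The paper produces one: the operator system $V\subseteq M_{4}$ of matrices $v_{a,b}^{\lambda,\theta}=v_{a,b}^{\lambda}\oplus v_{a,b}^{\theta}$ (two $2\times2$ blocks sharing the same off-diagonal entries), on which the hermitian functional $\mu\left(v_{a,b}^{\lambda,\theta}\right)=\lambda-\theta$ admits the two distinct orthogonal expansions $\mu=\mu_{1}-\mu_{2}$ with $\mu_{1}\left(v_{a,b}^{\lambda,\theta}\right)=\lambda$, $\mu_{2}\left(v_{a,b}^{\lambda,\theta}\right)=\theta$, and $\mu=\tau_{1}-\tau_{2}$ with $\tau_{1}\left(v_{a,b}^{\lambda,\theta}\right)=\lambda+a$, $\tau_{2}\left(v_{a,b}^{\lambda,\theta}\right)=\theta+a$, all four functionals being states by Corollary \ref{corTBT1} and Lemma \ref{lemOS0}. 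To complete your proof you would need to exhibit such an example (or an equivalent one) rather than assert its existence.
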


\begin{proof}
Let $\mu\in V_{h}^{\ast}$. By Corollary \ref{corOKE31} and Proposition
\ref{propAp1}, $\mu$ is identified with a hermitian functional on $\Phi\left(
V,\left\Vert \cdot\right\Vert _{e}\right)  $ up to an isometry. By Hahn-Banach
extension theorem, there is a functional $\tau\in M\left(  S\left(
V_{+}\right)  \right)  $, $\left\Vert \tau\right\Vert =\left\Vert
\mu\right\Vert $ such that $\tau\Phi=\mu$. But
\[
\left\langle \Phi\left(  x\right)  ,\tau^{\ast}\right\rangle =\left\langle
\Phi\left(  x\right)  ^{\ast},\tau\right\rangle ^{\ast}=\left\langle
\Phi\left(  x^{\ast}\right)  ,\tau\right\rangle ^{\ast}=\left\langle x^{\ast
},\mu\right\rangle ^{\ast}=\left\langle x,\mu^{\ast}\right\rangle
=\left\langle x,\mu\right\rangle =\left\langle \Phi\left(  x\right)
,\tau\right\rangle
\]
for all $x\in V$. It follows that $\left\langle \Phi\left(  x\right)
,\operatorname{Re}\tau\right\rangle =\left\langle \Phi\left(  x\right)
,\tau\right\rangle =\left\langle x,\mu\right\rangle $ for all $x\in V$, and
$\left\Vert \mu\right\Vert \leq\left\Vert \operatorname{Re}\tau\right\Vert
\leq\left\Vert \tau\right\Vert =\left\Vert \mu\right\Vert $ by Corollary
\ref{corOKE31}. Thus we can assume that $\tau\in M\left(  S\left(
V_{+}\right)  \right)  _{h}$. But $\tau=\tau_{+}-\tau_{-}$ with $\tau_{+}%
,\tau_{-}\in M\left(  S\left(  V_{+}\right)  \right)  _{+}$ and $\left\Vert
\tau\right\Vert =\left\Vert \tau_{+}\right\Vert +\left\Vert \tau
_{-}\right\Vert $. Put $\mu_{+}=\tau_{+}\Phi$ and $\mu_{-}=\tau_{-}\Phi$. Then
$\mu_{+},\mu_{-}\in V_{+}^{\ast}$ and $\mu=\tau\Phi=\tau_{+}\Phi-\tau_{-}%
\Phi=\mu_{+}-\mu_{-}$. By Lemma \ref{lemOS0}, we deduce that
\[
\left\Vert \mu_{+}\right\Vert +\left\Vert \mu_{-}\right\Vert =\left\langle
e,\mu_{+}\right\rangle +\left\langle e,\mu_{-}\right\rangle =\left\langle
1,\tau_{+}\right\rangle +\left\langle 1,\tau_{-}\right\rangle =\left\Vert
\tau_{+}\right\Vert +\left\Vert \tau_{-}\right\Vert =\left\Vert \tau
\right\Vert =\left\Vert \mu\right\Vert .
\]
Hence $\mu$ admits an orthogonal expansion $\mu=\mu_{+}-\mu_{-}$, $\left\Vert
\mu\right\Vert =\left\Vert \mu_{+}\right\Vert +\left\Vert \mu_{-}\right\Vert $.

Now let us provide an example of different orthogonal expansions of a
hermitian functional on an operator system. Let $v_{a,b}^{\lambda}=\left[
\begin{array}
[c]{cc}%
\lambda & a\\
b^{\ast} & \lambda
\end{array}
\right]  $ be $2\times2$-matrix over $\mathbb{C}$. If $b=a$ we write
$v_{a}^{\lambda}$ instead of $v_{a,a}^{\lambda}$. Note that $v_{a}^{\lambda
}=x\left(  \lambda,a\right)  $. Put
\[
v_{a,b}^{\lambda,\theta}=v_{a,b}^{\lambda}\oplus v_{a,b}^{\theta}=\left[
\begin{array}
[c]{cccc}%
\lambda & a & 0 & 0\\
b^{\ast} & \lambda & 0 & 0\\
0 & 0 & \theta & a\\
0 & 0 & b^{\ast} & \theta
\end{array}
\right]  \in M_{4}%
\]
and $V=\left\{  v_{a,b}^{\lambda,\theta}:\lambda,\theta,a,b\in\mathbb{C}%
\right\}  $, which is a unital subspace of $M_{4}$. Notice that $\left(
v_{a,b}^{\lambda,\theta}\right)  ^{\ast}=v_{b,a}^{\lambda^{\ast},\theta^{\ast
}}\in V$, that is, $V$ is an operator system on $\mathbb{C}^{4}$. The real
vector space of all hermitian elements from $V$ consists of matrices
$v_{a}^{\lambda,\theta}=v_{a}^{\lambda}\oplus v_{a}^{\theta}$, $\lambda
,\theta\in\mathbb{R}$. Note that $v_{a}^{\lambda,\theta}\geq0$ iff $\left\vert
a\right\vert \leq\lambda$ and $\left\vert a\right\vert \leq\theta$ thanks to
Corollary \ref{corTBT1}. Define the following positive linear functionals
\[
\mu_{1},\mu_{2}:V\rightarrow\mathbb{C},\quad\left\langle v_{a,b}%
^{\lambda,\theta},\mu_{1}\right\rangle =\lambda,\quad\left\langle
v_{a,b}^{\lambda,\theta},\mu_{2}\right\rangle =\theta,
\]
and put $\mu=\mu_{1}-\mu_{2}$. Notice that $\left\Vert \mu_{i}\right\Vert
=\left\langle e,\mu_{i}\right\rangle =\left\langle v_{0}^{1,1},\mu
_{i}\right\rangle =1$ and $\left\Vert \mu\right\Vert \leq\left\Vert \mu
_{1}\right\Vert +\left\Vert \mu_{2}\right\Vert =2$. But $\left\langle
v_{a,b}^{\lambda,\theta},\mu\right\rangle =\lambda-\theta$ and $2=\left\langle
v_{0}^{1,-1},\mu\right\rangle \leq\left\Vert \mu\right\Vert \left\Vert
v_{0}^{1,-1}\right\Vert =\left\Vert \mu\right\Vert $. Hence $\left\Vert
\mu\right\Vert =\left\Vert \mu_{1}\right\Vert +\left\Vert \mu_{2}\right\Vert
$, which means that $\mu=\mu_{1}-\mu_{2}$ is an orthogonal expansion of $\mu$.
Further, define the following linear functionals
\[
\tau_{1},\tau_{2}:V\rightarrow\mathbb{C},\quad\left\langle v_{a,b}%
^{\lambda,\theta},\tau_{1}\right\rangle =\lambda+a,\quad\left\langle
v_{a,b}^{\lambda,\theta},\tau_{2}\right\rangle =\theta+a.
\]
If $v_{a}^{\lambda,\theta}\geq0$ then $\lambda+a\geq0$ and $\theta+a\geq0$,
which means that $\tau_{1}$ and $\tau_{2}$ are positive functionals. In
particular, $\left\Vert \tau_{i}\right\Vert =\left\langle e,\tau
_{i}\right\rangle =\left\langle v_{0}^{1,1},\tau_{i}\right\rangle =1$. But%
\[
\left\langle v_{a,b}^{\lambda,\theta},\tau_{1}-\tau_{2}\right\rangle =\left(
\lambda+a\right)  -\left(  \theta+a\right)  =\lambda-\theta=\left\langle
v_{a,b}^{\lambda,\theta},\mu\right\rangle ,
\]
that is, $\mu=\tau_{1}-\tau_{2}$ is another orthogonal expansion.

Finally, let $\mathcal{A}$ be a unital $C^{\ast}$-algebra with $\mu
\in\mathcal{A}_{h}^{\ast}$. As we have just seen above $\mu$ admits an
orthogonal expansion $\mu=\mu_{+}-\mu_{-}$, $\mu_{+},\mu_{-}\in\mathcal{A}%
_{+}^{\ast}$, $\left\Vert \mu\right\Vert =\left\Vert \mu_{+}\right\Vert
+\left\Vert \mu_{-}\right\Vert $. The uniqueness of such expansion follows
from Corollary \ref{corOSS2}.
\end{proof}

\subsection{The extreme points\label{subsecEP}}

Recall that an extreme point of a convex subset $\mathcal{C\subseteq}X$ of a
vector space $X$ is a point of $\mathcal{C}$ that can not be expressed as a
nontrivial convex combination of elements from $\mathcal{C}$. The set of all
extreme points of $\mathcal{C}$ is called \textit{the extremal boundary of}
$\mathcal{C}$ denoted by $\partial\mathcal{C}$. The well known (see
\cite[2.5.4]{Ped}) Krein-Milman theorem asserts that the convex hull
$\operatorname{co}\left(  \partial\mathcal{C}\right)  $ of $\partial
\mathcal{C}$ is weakly dense in $\mathcal{C}$ for a convex, compact subset
$\mathcal{C}$ of a vector space $X$ equipped with the weak topology
$\sigma\left(  X,Y\right)  $ of a dual pair $\left(  X,Y\right)  $, that is,
$\mathcal{C=}\operatorname{co}\left(  \partial\mathcal{C}\right)  ^{-w}$.

As above let $\mathcal{T}$ be a compact Hausdorff topological space. It is not
hard to see that the extremal boundary of $\operatorname{ball}C\left(
\mathcal{T}\right)  $ consists of unitary functions, that is,
\begin{equation}
\partial\operatorname{ball}C\left(  \mathcal{T}\right)  =\left\{  f\in
C_{0}\left(  \mathcal{T}\right)  :\left\vert f\right\vert =1\right\}  .
\label{ex1}%
\end{equation}
Indeed, take $f\in\operatorname{ball}C\left(  \mathcal{T}\right)  $. If
$\left\vert f\left(  t_{0}\right)  \right\vert <1$ for some $t_{0}%
\in\mathcal{T}$ then $\alpha=\sup\left\vert f\left(  U\right)  \right\vert <1$
for a compact neighborhood $U$ of $t_{0}$, and we can choose $h\in C\left(
\mathcal{T}\right)  $ such that $0\leq h\leq1$, $h\left(  t_{0}\right)  =1$
and $\operatorname{supp}h\subseteq U$ (see \cite[1.7.5]{Ped}). Put $g=\left(
1-\alpha\right)  h$. Then $f\pm g\in\operatorname{ball}C\left(  \mathcal{T}%
\right)  $ and $f=\frac{1}{2}\left(  \left(  f+g\right)  +\left(  f-g\right)
\right)  $, that is, $f\notin\partial\operatorname{ball}C\left(
\mathcal{T}\right)  $. Conversely, if $f$ is unitary and $f=\frac{1}{2}\left(
g+h\right)  $ for some $g,h\in\operatorname{ball}C\left(  \mathcal{T}\right)
$, then $f\left(  t\right)  $ is an extreme point of the unit circle in
$\mathbb{C}$, therefore $g\left(  t\right)  =h\left(  t\right)  $, that is,
$f\in\partial\operatorname{ball}C\left(  \mathcal{T}\right)  $.

Now consider the cone $C\left(  \mathcal{T}\right)  _{+}$ of positive
functions. Then $\partial\operatorname{ball}C\left(  \mathcal{T}\right)  _{+}$
consists of projections in $C\left(  \mathcal{T}\right)  $, that is,
\begin{equation}
\partial\operatorname{ball}C\left(  \mathcal{T}\right)  _{+}=\left\{  \left[
S\right]  :S\subseteq\mathcal{T}\text{ is an open compact}\right\}  ,
\label{ex2}%
\end{equation}
where $\left[  S\right]  $ is the characteristic function of a subset $S$.
Indeed, if $\left[  S\right]  =\frac{1}{2}\left(  g+h\right)  $ for some
$g,h\in\operatorname{ball}C\left(  \mathcal{T}\right)  _{+}$, then $g\left(
t\right)  =h\left(  t\right)  =1$, $t\in S$ and $0\leq g\left(  t\right)
=h\left(  t\right)  =0$ for all $t\in\mathcal{T}-S$, that is, $g=h=\left[
S\right]  $ and $\left[  S\right]  \in\partial\operatorname{ball}C\left(
\mathcal{T}\right)  _{+}$. Conversely, take $f\in\partial\operatorname{ball}%
C\left(  \mathcal{T}\right)  _{+}$. If $0<f\left(  t_{0}\right)  <1$ for some
$t_{0}\in\mathcal{T}$ then as above, choose the function $h\in
\operatorname{ball}C\left(  \mathcal{T}\right)  _{+}$ and small $\varepsilon
>0$ such that $f\left(  1\pm\varepsilon h\right)  \in\operatorname{ball}%
C\left(  \mathcal{T}\right)  _{+}$. But $f=\frac{1}{2}\left(  f\left(
1+\varepsilon h\right)  +f\left(  1-\varepsilon h\right)  \right)  $, which is
impossible. Thus $f$ can only take the values $0$ or $1$, that is, $f=\left[
S\right]  $ for a clopen subset $S\subseteq\mathcal{T}$.

In the case of a $C^{\ast}$-algebra $\mathcal{A}$ we have the following
nontrivial result that $\partial\operatorname{ball}\mathcal{A}\neq\varnothing$
iff $\mathcal{A}$ is unital. We skip the details.

Now let $P\left(  \mathcal{T}\right)  $ be a convex subset in $M\left(
\mathcal{T}\right)  $ of probability measures on $\mathcal{T}$. Recall that a
Radon measure $\mu\in M\left(  \mathcal{T}\right)  $ on $\mathcal{T}$ is
called a probability measure if $\left\Vert \mu\right\Vert \leq1$ and
$\left\langle 1,\mu\right\rangle =1$. By Lemma \ref{lemOS0}, we deduce that%
\[
P\left(  \mathcal{T}\right)  =\left\{  \mu\in\operatorname{ball}M\left(
\mathcal{T}\right)  :\left\Vert \mu\right\Vert =\left\langle 1,\mu
\right\rangle =1\right\}  =S\left(  C\left(  \mathcal{T}\right)  _{+}\right)
\]
is the state space of the cone $C\left(  \mathcal{T}\right)  _{+}$. In
particular, $P\left(  \mathcal{T}\right)  $ is a convex, $w^{\ast}$-compact
set, which contains all atomic measures $\mu=\sum_{t\in S}c_{t}\delta_{t}$
with $c_{t}\geq0$ and $\sum_{t\in S}c_{t}=1$. Notice each such $\mu$ is a
norm-limit of positive atomic measures with finite support on $\mathcal{T}$.

\begin{proposition}
\label{propBPM}The equality $\partial P\left(  \mathcal{T}\right)  =\left\{
\delta_{t}:t\in\mathcal{T}\right\}  $ holds. In particular, $P\left(
\mathcal{T}\right)  $ consists of those $\mu\in M\left(  \mathcal{T}\right)
_{+}$ which are $w^{\ast}$-limit of $\sum_{t\in F}c_{t}\delta_{t}$ with
$c_{t}\geq0$, $\sum_{t\in F}c_{t}=1$ and $F\subseteq\mathcal{T}$ is a finite
subset. In particular,
\[
P\left(  \mathcal{T}\right)  =\operatorname{co}\left(  \left\{  \delta
_{t}:t\in\mathcal{T}\right\}  \right)  ^{-w^{\ast}}\text{\quad and \quad
}\operatorname{ball}M\left(  \mathcal{T}\right)  =\left(  \operatorname{abs}%
\left(  \partial P\left(  \mathcal{T}\right)  \right)  \right)  ^{-w^{\ast}}.
\]

\end{proposition}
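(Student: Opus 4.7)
The plan is to first pin down the extremal boundary $\partial P(\mathcal{T}) = \{\delta_{t} : t \in \mathcal{T}\}$ explicitly, then derive the two closure statements from Krein--Milman and the Absolute Bipolar Theorem, respectively.

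First I would verify $\delta_{t} \in \partial P(\mathcal{T})$: if $\delta_{t} = \tfrac{1}{2}(\mu + \nu)$ with $\mu, \nu \in P(\mathcal{T})$, then $0 \leq \mu \leq \mu + \nu = 2\delta_{t}$ in the lattice $M(\mathcal{T})_{h}$, so $\mu$ is concentrated on $\{t\}$ and $\mu = \mu(\{t\})\delta_{t}$; the unit-mass condition then forces $\mu = \delta_{t}$, and similarly $\nu = \delta_{t}$. For the reverse inclusion, take a hypothetical non-Dirac $\mu \in \partial P(\mathcal{T})$, so that $\operatorname{supp}(\mu)$ contains two distinct points $t_{1} \neq t_{2}$. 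By Urysohn's lemma pick $f \in C(\mathcal{T})$ with $0 \leq f \leq 1$, $f(t_{1}) = 1$, $f(t_{2}) = 0$, and set $a = \int f\,d\mu$. Continuity of $f$ together with $t_{1}, t_{2} \in \operatorname{supp}(\mu)$ yields neighborhoods of positive $\mu$-measure on which $f$ is close to $1$ and close to $0$ respectively, whence $0 < a < 1$. Then $\mu_{1} = a^{-1} f \mu$ and $\mu_{2} = (1 - a)^{-1}(1 - f)\mu$ are elements of $P(\mathcal{T})$ with $\mu = a \mu_{1} + (1 - a)\mu_{2}$; extremality forces $\mu_{1} = \mu_{2} = \mu$, which gives $(f - a)\mu = 0$ and hence $f = a$ $\mu$-a.e., contradicting the fact that $f$ differs from $a$ on some neighborhood of $t_{1}$ of positive $\mu$-measure.

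Once $\partial P(\mathcal{T}) = \{\delta_{t} : t \in \mathcal{T}\}$ is in hand, the Krein--Milman theorem applied to the convex $w^{\ast}$-compact set $P(\mathcal{T}) = S(C(\mathcal{T})_{+})$ yields $P(\mathcal{T}) = \operatorname{co}(\{\delta_{t}\})^{-w^{\ast}}$, which is exactly the stated description of $P(\mathcal{T})$ as the $w^{\ast}$-closure of finite convex combinations $\sum_{t \in F} c_{t}\delta_{t}$ with $c_{t} \geq 0$, $\sum c_{t} = 1$. For the final equality, I invoke the Absolute Bipolar Theorem in the dual pair $(C(\mathcal{T}), M(\mathcal{T}))$: the absolute polar of $\{\delta_{t} : t \in \mathcal{T}\}$ inside $C(\mathcal{T})$ is $\{f \in C(\mathcal{T}) : |f(t)| \leq 1 \text{ for all } t \in \mathcal{T}\} = \operatorname{ball} C(\mathcal{T})$, whose absolute polar back in $M(\mathcal{T})$ is $\operatorname{ball} M(\mathcal{T})$. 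Therefore $\operatorname{abs}(\partial P(\mathcal{T}))^{-w^{\ast}} = \{\delta_{t}\}^{\circ\circ} = \operatorname{ball} M(\mathcal{T})$.

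The main obstacle is the reverse half of the extremal-point identification, where one must produce both the strict bound $0 < a < 1$ and the strict inequality $\mu_{1} \neq \mu_{2}$; both rely on the topological support condition coupled with the continuity of $f$, but once the right separating function is selected via Urysohn these reduce to routine measure-theoretic checks.
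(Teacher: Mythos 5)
Your proposal is correct, and it reaches the key identity $\partial P(\mathcal{T})=\{\delta_{t}:t\in\mathcal{T}\}$ by a route that differs from the paper's in its two harder steps. For the inclusion $\{\delta_{t}\}\subseteq\partial P(\mathcal{T})$ the paper argues via kernels ($r\mu\leq 2\delta_{t}$ on $|f|$ forces $\ker\delta_{t}\subseteq\ker\mu$, hence $\mu=\lambda\delta_{t}$), while you use order domination $0\leq\mu\leq 2\delta_{t}$ in the lattice $M(\mathcal{T})_{h}$; both are fine, though your version tacitly uses inner regularity of Radon measures to pass from $\mu\leq 2\delta_{t}$ to ``$\mu$ is concentrated on $\{t\}$'', and likewise your reduction ``non-Dirac implies $\operatorname{supp}(\mu)$ has two points'' uses that a Radon probability measure has nonempty support and that singleton support forces a Dirac measure. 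For the converse inclusion the paper and you start from the same splitting $\mu=a\mu_{1}+(1-a)\mu_{2}$ with $\mu_{1}=a^{-1}f\mu$, $\mu_{2}=(1-a)^{-1}(1-f)\mu$, but then diverge: the paper lets $f$ range over all of $C(\mathcal{T})_{+}$, deduces that an extreme point is a multiplicative functional (handling the edge cases $a=0,1$ separately), and invokes the Gelfand transform to identify it with some $\delta_{t}$; you instead choose a single Urysohn function separating two support points and derive a direct contradiction from $f=a$ $\mu$-a.e., which avoids character theory entirely and is arguably more elementary, at the cost of working with supports and a.e. statements rather than purely functional-analytic data. Finally, for $\operatorname{ball}M(\mathcal{T})=(\operatorname{abs}(\partial P(\mathcal{T})))^{-w^{\ast}}$ you compute $\{\delta_{t}\}^{\circ}=\operatorname{ball}C(\mathcal{T})$ and apply the Absolute Bipolar Theorem, whereas the paper runs an explicit Hahn--Banach separation argument; these are equivalent in substance (the bipolar theorem is separation in disguise), and your version is shorter and consistent with how the paper uses bipolars elsewhere (e.g. in Lemma \ref{lemUNS0}). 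The Krein--Milman step is identical in both treatments.
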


\begin{proof}
First prove that $\delta_{t}\in\partial P\left(  \mathcal{T}\right)  $ for
every $t$. If $\delta_{t}=r\mu+\left(  1-r\right)  \tau$ for some $\mu,\tau\in
P\left(  \mathcal{T}\right)  $ and $0<r<1$, then $r\left\vert \left\langle
f,\mu\right\rangle \right\vert \leq r\left\langle \left\vert f\right\vert
,\mu\right\rangle =\left\langle \left\vert f\right\vert ,r\mu\right\rangle
\leq\left\langle \left\vert f\right\vert ,\delta_{t}\right\rangle =\left\vert
f\left(  t\right)  \right\vert $. It follows that $\ker\delta_{t}\subseteq
\ker\mu$ or $\mu=\lambda\delta_{t}$ for some $\lambda\in\mathbb{C}$. But
$\lambda=\lambda\left\langle 1,\delta_{t}\right\rangle =\left\langle
1,\mu\right\rangle =1$, that is, $\mu=\delta_{t}$ and $\tau=\delta_{t}$. Thus
$\delta_{t}$ is an extremal point of $P\left(  \mathcal{T}\right)  $.

Now take $\mu\in\partial P\left(  \mathcal{T}\right)  $. Prove that $\mu$ is a
multiplicative functional on the $C^{\ast}$-algebra $C\left(  \mathcal{T}%
\right)  $ (or a character), that is, $\left\langle fg,\mu\right\rangle
=\left\langle f,\mu\right\rangle \left\langle g,\mu\right\rangle $ for all
$f,g\in C\left(  \mathcal{T}\right)  $. Since $C\left(  \mathcal{T}\right)  $
is a linear span of $C\left(  \mathcal{T}\right)  _{+}$, we can assume that
$0\leq f\leq1$. Put $r=\left\langle f,\mu\right\rangle $, thereby $0\leq
r\leq\left\langle 1,\mu\right\rangle =1$. Notice that $\left\vert \left\langle
fg,\mu\right\rangle \right\vert \leq\left\langle \left\vert fg\right\vert
,\mu\right\rangle =\left\langle f\left\vert g\right\vert ,\mu\right\rangle
\leq\left\Vert g\right\Vert _{\infty}\left\langle f,\mu\right\rangle
=r\left\Vert g\right\Vert _{\infty}$ and $\left\vert \left\langle \left(
1-f\right)  g,\mu\right\rangle \right\vert \leq\left\langle \left(
1-f\right)  \left\vert g\right\vert ,\mu\right\rangle \leq\left\Vert
g\right\Vert _{\infty}\left\langle \left(  1-f\right)  ,\mu\right\rangle
=\left\Vert g\right\Vert _{\infty}\left(  1-r\right)  $. It follows that
$\tau_{1}\left(  g\right)  =r^{-1}\left\langle fg,\mu\right\rangle $ and
$\tau_{2}\left(  g\right)  =\left(  1-r\right)  ^{-1}\left\langle \left(
1-f\right)  g,\mu\right\rangle $ are elements of $\operatorname{ball}M\left(
\mathcal{T}\right)  $ whenever $0<r<1$. Actually, they are functionals from
$P\left(  \mathcal{T}\right)  $ and $\mu=r\tau_{1}+\left(  1-r\right)
\tau_{2}$. Taking into account that $\mu\in\partial P\left(  \mathcal{T}%
\right)  $, we deduce that $\mu=\tau_{1}=\tau_{2}$. Hence $\left\langle
fg,\mu\right\rangle =\left\langle f,\mu\right\rangle \left\langle
g,\mu\right\rangle $ whenever $0<r<1$. But the equality holds if $r=0$, for
$\left\vert \left\langle fg,\mu\right\rangle \right\vert \leq r\left\Vert
g\right\Vert _{\infty}$. Since $\left\vert \left\langle \left(  1-f\right)
g,\mu\right\rangle \right\vert \leq\left(  1-r\right)  \left\Vert g\right\Vert
_{\infty}$, the equality holds for $r=1$ too. Whence $\mu$ is a multiplicative functional.

It is well known (see \cite[4.2.5]{Ped}) that the Gelfand transform of the
$C^{\ast}$-algebra $C\left(  \mathcal{T}\right)  $ is the identity mapping.
Therefore $\mu=\delta_{t}$ for some $t\in\mathcal{T}$. To end up the proof we
need to apply Krein-Milman theorem to the convex, $w^{\ast}$-compact set
$P\left(  \mathcal{T}\right)  $ within the dual pair $\left(  M\left(
\mathcal{T}\right)  ,C\left(  \mathcal{T}\right)  \right)  $.

Finally, for a possible $\mu\in\operatorname{ball}M\left(  \mathcal{T}\right)
\backslash\left(  \operatorname{abs}\left(  \partial P\left(  \mathcal{T}%
\right)  \right)  \right)  ^{-w^{\ast}}$ one can find a $w^{\ast}$-continuous
linear functional $f:M\left(  \mathcal{T}\right)  \rightarrow\mathbb{C}$ such
that $\operatorname{Re}\left\langle \operatorname{abs}\left(  \partial
P\left(  \mathcal{T}\right)  \right)  ,f\right\rangle \leq r<\operatorname{Re}%
\left\langle \mu,f\right\rangle $ for some real $r\geq0$ thanks to the
Separation Theorem \cite[2.4.7]{Ped} But $f\in C\left(  \mathcal{T}\right)  $
and $\operatorname{Re}\left\langle \mu,f\right\rangle =\operatorname{Re}%
\left\langle f,\mu\right\rangle \leq\left\vert \left\langle f,\mu\right\rangle
\right\vert \leq\left\Vert f\right\Vert _{\infty}\left\Vert \mu\right\Vert
\leq\left\Vert f\right\Vert _{\infty}$. Moreover, if $f\left(  t\right)
\neq0$ for some $t\in\mathcal{T}$, then using the equality $\partial P\left(
\mathcal{T}\right)  =\left\{  \delta_{t}:t\in\mathcal{T}\right\}  $, we deduce
that $\left\vert f\left(  t\right)  \right\vert =\theta f\left(  t\right)
=\left\langle \theta f,\delta_{t}\right\rangle =\operatorname{Re}\left\langle
\theta f,\delta_{t}\right\rangle =\operatorname{Re}\left\langle \theta
\delta_{t},f\right\rangle \leq r$ for a certain $\theta\in\mathbb{C}$,
$\left\vert \theta\right\vert =1$. Thus $\left\Vert f\right\Vert _{\infty
}=\vee\left\vert f\left(  \mathcal{T}\right)  \right\vert \leq r$ whereas
$r<\left\Vert f\right\Vert _{\infty}$, a contradiction.
\end{proof}

In particular,
\begin{align*}
\operatorname{ball}C\left(  \mathcal{T}\right)   &  =\left(
\operatorname{ball}M\left(  \mathcal{T}\right)  \right)  ^{\circ}=\left(
\left(  \operatorname{abs}\left(  P\left(  \mathcal{T}\right)  \right)
\right)  ^{-w^{\ast}}\right)  ^{\circ}=\left(  \operatorname{abs}\left(
P\left(  \mathcal{T}\right)  \right)  \right)  ^{\circ}=\left(  P\left(
\mathcal{T}\right)  \right)  ^{\circ}\\
&  =S\left(  C\left(  \mathcal{T}\right)  _{+}\right)  ^{\circ},
\end{align*}
which means that $\left\Vert f\right\Vert _{\infty}=\vee\left\vert
\left\langle f,\operatorname{ball}M\left(  \mathcal{T}\right)  \right\rangle
\right\vert =\vee\left\vert \left\langle f,S\left(  C\left(  \mathcal{T}%
\right)  _{+}\right)  \right\rangle \right\vert =\left\Vert f\right\Vert _{e}$
for every $f\in C\left(  \mathcal{T}\right)  $. Thus $\left\Vert
\cdot\right\Vert _{\infty}$ is the state norm $\left\Vert \cdot\right\Vert
_{e}$. Note also that $\left\Vert f\right\Vert _{\infty}=\vee\left\vert
\left\langle f,\partial P\left(  \mathcal{T}\right)  \right\rangle \right\vert
$, $f\in C\left(  \mathcal{T}\right)  $.

\textbf{Acknowledgements. }The author thanks for an opportunity to share the
results of the paper in the seminar Algebras in Analysis by A. Ya. Helemskii
and A. Yu. Pirkovskii. The paper has no funding and there are no relevant
financial or non-financial competing interests to report.

\end{document}